\newcommand{\rd}{\color{black}}
\newcommand{\bl}{\color{black}}
\newcommand{\gl}{\color{black}}
\newcommand{\tl}{\color{black}}
\newcommand{\al}{\color{black}}
\newcommand{\dl}{\color{black}}
\newcommand{\cl}{\color{black}}
\newcommand{\Z}{\mathbb{Z}}
\newcommand{\Q}{\mathbb{Q}}
\newcommand{\F}{\mathbb{F}}
\newcommand{\lrta}{\longrightarrow}
\newcommand{\h}{\mathcal{H}}
\newcommand{\sel}{\mathrm{Sel}}
\newcommand{\gal}{\mathrm{Gal}}
\newcommand{\ilim}{\varinjlim}
\newcommand{\plim}{\varprojlim}
\newcommand{\hone}{\mathrm{H}^1}
\newcommand{\hzero}{\mathrm{H}^0}
\renewcommand{\(}{\left\(}
\renewcommand{\)}{\right\)}
\numberwithin{equation}{section}
\theoremstyle{plain}
\newtheorem{theorem}{Theorem}[section]
\newtheorem{lemma}[theorem]{Lemma}
\newtheorem{corollary}[theorem]{Corollary}
\theoremstyle{definition}
\newtheorem{definition}[theorem]{Definition}
\newtheorem{remark}[theorem]{Remark}
\newtheorem{assumption}[theorem]{Assumption}
\newenvironment{nouppercase}{%
\renewcommand{\uppercasenonmath}[1]{}}{} 
\newcommand{\Keywords}[1]{\par\noindent
{\small{Keywords and phrases}: #1}}
\newcommand{\AMS}[1]{\par\noindent
{\small{AMS Subject Classification}: #1}}
\author{Suman Ahmed, Chandrakant Aribam, Sudhanshu Shekhar}
\address{Indian Institute of Science Education and Research (IISER) Mohali, Knowledge City, Sector 81, Manauli, SAS Nagar, Punjab, 140306, India.}
\email{sahmed@iisermohali.ac.in}
\address{Indian Institute of Science Education and Research (IISER) Mohali, Knowledge City, Sector 81, Manauli, SAS Nagar, Punjab, 140306, India.}
\email{aribam@iisermohali.ac.in}
\address{Indian Institute of Technology (IIT) Kanpur, Kanpur, Uttar Pradesh, 208016, India.}
\email{sshekhars2012@gmail.com}
\begin{document}
 
\title{Root numbers and parity of local Iwasawa invariants}
   

\begin{abstract}
Given two elliptic curves $E_1$ and $E_2$ defined over the field of rational numbers, $\Q$, with good reduction at an odd prime $p$ and equivalent mod $p$ Galois representation, we compare the $p$-Selmer rank, global and local root numbers of $E_1$ and $E_2$ over number fields. 
\end{abstract}

\begin{nouppercase}
\maketitle
\end{nouppercase}
\let\thefootnote\relax\footnotetext{
\AMS{11G05, 11G07, 11R23, 14H52.}
\Keywords{Elliptic curves, Selmer groups, Iwasawa invariants, Root numbers.}
}

\section{Introduction} \label{s1}
Let $F$ be a number field. Fix an odd prime $p$ and let $\overline{F}$ denote a fixed algebraic closure of $F$. For an algebraic field extension $L/F$ and an elliptic curve $E$ defined over $F$, the $p$-Selmer group $\sel_p(E/L)$ of $E$ over $L$ is defined by the exact sequence $$ 0 \lrta \sel_p(E/L) \lrta \hone(L,E[p^\infty]) \stackrel{\oplus_w\delta_w}{\lrta} \prod_{w} \hone(L_w,E(\overline{L_w})) $$
where $w$ varies over the set of primes of $L$, $\overline{L_w}$ denotes a fixed algebraic closure of $L_w$, $E(\overline{L_w})$ denotes the $\overline{L_w}$ points of $E$, $E[p^\infty]$ denotes the $p$-power torsion points of $E$ defined over $\overline{F}$ and $\oplus\delta_w$ denotes the natural restriction map.

Let $E_1$ and $E_2$ be two elliptic curves defined over the field of rational numbers, $\Q$, with good  reduction at $p$. 
 For an abelian group $M$, let $M[p]$ denote the $p$-torsion subgroup of $M$. 
We say that $E_1$ and $E_2$ are congruent at $p$ if  $E_1[p]\cong E_2[p]$ as modules over $\text{Gal}(\overline{\Q}/\Q)$. 
The aim of this article is to study the variation of the parity of the $p$-Selmer ranks, global and local root numbers of $E_1$ and $E_2$ over $F$. 

Let $N_i$ denote the conductor of $E_i$ over $F$, and $\overline{N_i}$ be the prime to $p$-{\rd part of the} conductor of the Galois {\rd representation}  $E_i[p]$ over $F$  for $i=1,2$. 
Let $\Sigma$ be the finite set of primes of $F$ containing the primes of bad reduction of $E_1$ and $E_2$, the infinite primes and the primes above $p$. 
Put $$ \Sigma_0:= \{v \in \Sigma  \mid  v \mid N_1/\overline{N}_1 \ \text{or} \ v \mid N_2/\overline{N}_2  \}. $$
Let $S_{E_i}$ be the set of primes  $v\in \Sigma_0$ such that $E_i$ has split multiplicative reduction at $v$, and $s_p(E_i/F)$ denote the $\Z_p$-rank of the Pontryagin dual of $\sel_p(E_i/F)$. 
{ \cl We denote the set of $p$-th roots of unity by $\mu_p$.
Let $ T $ be the set consisting of primes $v$ of $F$ in $\Sigma_0$ such that $\mu_p\not\subset F_v$ and one of the elliptic curves has multiplicative reduction while the other has additive reduction at $v$. }
Note that $T$ can be non-empty only when $p=3$ (see  Lemma \ref{l2.10}).
In this article, we prove the  following.  

\begin{theorem}\label{t1.1}

Let $E_1$ and $E_2$ be two elliptic curves defined over $\Q$ with good and ordinary reduction at $p$.
We further assume the following. \\
(a) $E_1[p]$ is an irreducible $\gal(\overline{F}/F)$-module.\\
(b) $\sel_p(E_1/F_{cyc})[p]$ is finite where $F_{cyc}$ denotes the cyclotomic $\Z_p$-extension of $F$.\\
{\rd Let} $E_1$ and $E_2$ are congruent at $p$.
{\rd Then} $$
s_p(E_1/F) + \mid S_{E_1} \mid \equiv s_p(E_2/F) + \mid S_{E_2} \mid + \mid T \mid  \,  \text{mod} \enspace 2 . $$

\end{theorem}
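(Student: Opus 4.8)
\emph{Overall strategy.} I would pass to the cyclotomic $\Z_p$-extension, compare the imprimitive Iwasawa $\lambda$-invariants of $E_1$ and $E_2$ there, and then translate back to $F$, so that the discrepancy shows up only through local contributions at the primes of $\Sigma_0$. First I would transport the structural hypotheses from $E_1$ to $E_2$. Let $\sel^{\Sigma_0}_p(E_i/F_{cyc})$ be the Selmer group over $F_{cyc}$ with the local conditions at (the primes above) $\Sigma_0$ removed. Since $E_i$ has good ordinary reduction at $p$, the $p$-ordinary filtration of $E_i[p^\infty]$ at $p$ is determined by $E_i[p]$, and, using hypothesis (a), $\sel^{\Sigma_0}_p(E_i/F_{cyc})[p]$ is canonically identified with the imprimitive Selmer group of the $\gal(\ol F/F)$-module $E_i[p]$; hence the congruence gives an isomorphism $\sel^{\Sigma_0}_p(E_1/F_{cyc})[p]\cong\sel^{\Sigma_0}_p(E_2/F_{cyc})[p]$ (of $\F_p$-vector spaces, in fact of $\Lambda$-modules). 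Hypothesis (b), together with the fact that $\sel_p$ and $\sel^{\Sigma_0}_p$ over $F_{cyc}$ differ by local terms at $\Sigma_0$ that are cofinitely generated over $\Z_p$, shows $\sel^{\Sigma_0}_p(E_1/F_{cyc})$ is $\Lambda$-cotorsion with $\mu=0$, hence $\sel^{\Sigma_0}_p(E_1/F_{cyc})[p]$ is finite; by the isomorphism the same holds for $E_2$, so $\sel_p(E_2/F_{cyc})$ is $\Lambda$-cotorsion with $\mu=0$ as well.

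Next I would upgrade this dimension equality to an equality of $\lambda$-invariants. By hypothesis (a) and a theorem of Greenberg, the Pontryagin dual $X_i:=X^{\Sigma_0}(E_i/F_{cyc})$ of the imprimitive Selmer group has no nonzero finite $\Lambda$-submodule; with $\mu=0$ this forces $X_i$ to be $\Z_p$-free, whence $\lambda^{\Sigma_0}_p(E_i/F_{cyc})=\dim_{\F_p}X_i/pX_i=\dim_{\F_p}\sel^{\Sigma_0}_p(E_i/F_{cyc})[p]$, and the two sides agree for $i=1,2$. Comparing primitive and imprimitive Selmer groups over $F_{cyc}$ (again using (a) for surjectivity of the relevant global-to-local map) gives $\lambda^{\Sigma_0}_p(E_i/F_{cyc})=\lambda_p(E_i/F_{cyc})+\sum_{v\in\Sigma_0}\lambda_v(E_i)$, where $\lambda_v(E_i)$ is the $\lambda$-invariant of the local term $\bigoplus_{w\mid v}\hone(F_{cyc,w},E_i)[p^\infty]$. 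Subtracting, $\lambda_p(E_1/F_{cyc})-\lambda_p(E_2/F_{cyc})=\sum_{v\in\Sigma_0}\big(\lambda_v(E_2)-\lambda_v(E_1)\big)$.

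It then remains to (i) relate the parity of $s_p(E_i/F)$ to $\lambda_p(E_i/F_{cyc})$, and (ii) compute the parities $\lambda_v(E_i)\bmod 2$. For (i) I would use Mazur's control theorem for good ordinary reduction and the functional equation of $X(E_i/F_{cyc})$ (Flach, Perrin-Riou) to write $(-1)^{s_p(E_i/F)}=(-1)^{\lambda_p(E_i/F_{cyc})}\cdot\prod_{v\mid p}\varepsilon_v(E_i)\cdot\varepsilon_\infty$, the factors at the primes above $p$ depending only on $E_i[p]$ (as $E_i$ has good reduction there) and $\varepsilon_\infty$ only on $F$, so both cancel in the comparison of $E_1$ and $E_2$. (Alternatively, step (i) can be replaced by the $p$-parity conjecture, valid here since $E_i$ is good ordinary at $p$ by Nekov\'a\v{r}'s theorem, giving directly $(-1)^{s_p(E_i/F)}=w(E_i/F)$ and reducing the statement to an identity of global root numbers.) For (ii), for $v\in\Sigma_0$ (so $v\nmid p$) split multiplicative reduction contributes an odd $\lambda_v$ (equivalently $w(E_i/F_v)=-1$), while non-split multiplicative and the potentially-good additive cases arising here contribute an even one; the mixed case, in which one curve is multiplicative and the other additive at $v$ with $\mu_p\not\subset F_v$ — which by Lemma \ref{l2.10} occurs only for $p=3$ — is exactly where the parities of $\lambda_v(E_1)$ and $\lambda_v(E_2)$ must differ. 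Summing over $v\in\Sigma_0$ turns the displayed identity into $s_p(E_1/F)+|S_{E_1}|\equiv s_p(E_2/F)+|S_{E_2}|+|T|\pmod 2$.

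\emph{Main obstacle.} The crux is step (ii): the case-by-case determination, at each $v\in\Sigma_0$, of the parity of $\lambda_v(E_i)$ (equivalently of the local root number $w(E_i/F_v)$) in terms of the reduction type, the ramification of $E_i[p]$ at $v$, and whether $\mu_p\subset F_v$ — in particular the analysis of the mixed multiplicative/additive primes that produce $T$ when $p=3$. The bridge in step (i) from $F_{cyc}$ back to $F$ (or, equivalently, the appeal to the $p$-parity conjecture) is the other delicate input; the remaining ingredients — transfer of $\Lambda$-cotorsion and $\mu=0$, the no-finite-submodule property, and the primitive--imprimitive exact sequences over $F_{cyc}$ — are by now standard in the Greenberg--Vatsal circle of ideas.
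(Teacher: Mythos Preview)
Your overall strategy is the paper's: equate the imprimitive $\lambda$-invariants via $E_1[p]\cong E_2[p]$ (this is \cite[Equations 8 and 10]{Sh}), decompose $\lambda^{\Sigma_0}_{E_i}=\lambda_{E_i}+\sum_{v\in\Sigma_0}\tau_{E_i}^v$ (\cite[Lemma 3.1]{Sh}), and then compare the local terms. For step (i) the paper simply quotes \cite[Proposition 3.10]{Gr1}, which gives $s_p(E_i/F)\equiv\lambda_{E_i}\pmod 2$ directly from the functional equation of the characteristic power series; your extra factors $\varepsilon_v,\varepsilon_\infty$ are not needed, and your alternative via the $p$-parity conjecture is unavailable over a general number field $F$ (and would in any case reduce the theorem to Theorem~\ref{t1.3}, which the paper explicitly avoids).

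The real gap is in step (ii). Your assertion that split multiplicative reduction contributes an odd $\lambda_v$ while non-split multiplicative and additive contribute an even one is false: by Lemma~\ref{l2.7}, split multiplicative gives $\sigma_E^w=0$ whenever $\mu_p\not\subset F_v$, and by Lemma~\ref{l2.8} non-split multiplicative gives $\sigma_E^w=1$ whenever $\epsilon_p=\delta$. Your parenthetical ``equivalently $w(E_i/F_v)=-1$'' is precisely the error: Theorem~\ref{t1.4} says only that the \emph{ratio} of local root numbers matches $(-1)^{\tau_{E_1}^v-\tau_{E_2}^v}$; the individual parity $\tau_{E_i}^v\bmod 2$ is not determined by the reduction type alone but also by $E_i[p]|_{G_v}$ (which, crucially, is the same for both curves). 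The paper therefore never tabulates $\sigma_{E_i}^w$ individually; it compares the pair $(\sigma_{E_1}^w,\sigma_{E_2}^w)$ case by case according to the two reduction types, using the congruence $E_1[p]\cong E_2[p]$ at each step (Lemmas~\ref{l2.1}--\ref{l2.12}, plus the mixed multiplicative/additive analysis for $p=3$ carried out inside the proof of Theorem~\ref{t3.2}). That pairwise comparison, not a table of individual parities, is what makes the accounting for $S_{E_1}$, $S_{E_2}$ and $T$ come out correctly.
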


The above theorem is proved in \cite[Theorem 1.1]{Sh} under the assumption that $ \mu_p \subset F. $ 
Next assume that $E_1$ and $E_2$ have good supersingular reduction at $ p. $
In this case $\sel_p(E_i/F_{cyc})[p]$ are not finite groups so we consider the $\pm$ Selmer groups $\sel^{\pm}_p(E_i/F_{cyc})$ associated to $E_i, i=1,2$ as defined by Kobayashi (see Section \ref{s4}). 
Then we prove the following {\cl analogue} of Theorem \ref{t1.1} for elliptic curves with supersingular reduction at $ p $.
\begin{theorem}\label{t1.2}
Let $E_1$ and $E_2$ be two elliptic curves defined over $\Q$ with good and supersingular reduction at the prime $p$.
We further assume the following. \\
(a) $E_1[p]$ is an irreducible $\gal(\overline{F}/F)$-module.\\
(b) $\sel_p^{-}(E_1/F_{cyc})[p]$ is finite.\\
(c) $p$ splits completely over $F$. \\
If $E_1$ and $E_2$ are congruent at $p$
then $$ s_p(E_1/F) + \mid S_{E_1} \mid \equiv s_p(E_2/F) + \mid S_{E_2} \mid + \mid T \mid  \,  \text{mod} \enspace 2 . $$

\end{theorem}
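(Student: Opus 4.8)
The plan is to mirror the strategy used for Theorem~\ref{t1.1} in the ordinary case, but replacing the classical cyclotomic Selmer group by Kobayashi's signed Selmer groups $\sel^{\pm}_p(E_i/F_{cyc})$, which are known to be cotorsion $\Lambda$-modules with finite $\mu$-invariant under our hypotheses. First I would set up the comparison at the level of mod-$p$ Selmer groups over $F_{cyc}$. Since $E_1$ and $E_2$ are congruent at $p$, we have an isomorphism $E_1[p] \cong E_2[p]$ of $\gal(\overline{F}/F)$-modules; the point is that the local conditions defining $\sel^{\pm}$ are cut out by the formal group at $p$ together with the usual unramified/Tate local conditions away from $p$, and these conditions depend on $E_i$ only through $E_i[p]$ together with a finite amount of local ramification data at the primes in $\Sigma_0$. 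Using hypothesis (c), that $p$ splits completely in $F$, each completion $F_v$ for $v \mid p$ is just $\Q_p$, so Kobayashi's local theory at each such $v$ is literally the theory over $\Q_p$; this is precisely what lets the $\pm$ local conditions transport across the congruence. Consequently $\sel^{-}_p(E_1/F_{cyc})[p]$ and $\sel^{-}_p(E_2/F_{cyc})[p]$ differ only by local error terms supported on $\Sigma_0$, and I would quantify these via the same local analysis (Lemma~\ref{l2.10} and its neighbours) that produces the sets $S_{E_i}$ and $T$.

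The second step is to pass from $F_{cyc}$ back down to $F$. By hypothesis (b), $\sel^{-}_p(E_1/F_{cyc})[p]$ is finite, hence the dual $\sel^{-}_p(E_1/F_{cyc})^\vee$ is a finitely generated torsion $\Lambda$-module with $\mu = 0$; one then shows the same holds for $E_2$ because the congruence forces the mod-$p$ signed Selmer groups to have the same $\mathbb{F}_p[[\gal(F_{cyc}/F)]]$-corank up to the explicit local discrepancies. For such modules the parity of the $\Z_p$-corank of the signed Selmer group over $F_{cyc}$ can be read off at the bottom layer, and a control-type argument (Kobayashi's analogue of Mazur's control theorem for $\pm$ Selmer groups) relates $\sel^{-}_p(E_i/F_{cyc})^{\gal(F_{cyc}/F)}$ to $\sel_p(E_i/F)$, whose dual has $\Z_p$-rank $s_p(E_i/F)$. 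The upshot is a clean formula: $s_p(E_i/F)$ is congruent mod $2$ to the $\Z_p$-corank of $\sel^{-}_p(E_i/F_{cyc})$ plus a correction counting certain local points, and the correction terms at primes in $S_{E_i}$ and $T$ are exactly the ones appearing in the statement.

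Finally I would combine the two comparisons: subtracting the $E_1$ and $E_2$ versions of the displayed congruence, the global $F_{cyc}$-coranks cancel (they agree mod $2$ by the congruence, since $\sel^{-}_p(E_1/F_{cyc})[p] \cong \sel^{-}_p(E_2/F_{cyc})[p]$ up to the local terms already accounted for), leaving precisely
$$
s_p(E_1/F) + \mid S_{E_1}\mid \equiv s_p(E_2/F) + \mid S_{E_2}\mid + \mid T\mid \pmod 2 .
$$
I expect the main obstacle to be the control theorem for $\pm$ Selmer groups in the split case together with the local comparison at the primes $v \mid p$: one must check that Kobayashi's plus/minus subgroups of $\hone(\Q_p, E_i[p^\infty])$ are determined by $E_i[p]$ in a way compatible with the congruence, i.e.\ that the jump conditions defining $E^{\pm}(F_{cyc,v})$ descend to mod-$p$ coefficients without introducing a parity ambiguity. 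The hypothesis that $p$ splits completely is what makes this tractable, since it reduces everything to the well-understood local theory over $\Q_p$; without it the norm-compatible trace conditions over a ramified or inert $F_v$ would require a genuinely new local computation. The away-from-$p$ analysis, by contrast, is identical to the ordinary case and can be quoted from the proof of Theorem~\ref{t1.1}.
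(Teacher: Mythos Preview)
Your overall architecture is the same as the paper's: first compare the imprimitive minus $\lambda$-invariants over $F_{cyc}$ via the congruence (the paper does this through Kim's results \cite[Propositions~2.9, 2.10]{Ki1} and the exact sequence~(\ref{e4.1}), obtaining $\lambda^{\Sigma_0,-}_{E_1}=\lambda^{\Sigma_0,-}_{E_2}$; hypothesis~(c) enters exactly where you say, to make each $F_v\cong\Q_p$ so that Kobayashi's local condition at $p$ is determined by $E[p]$), then write $\lambda^{\Sigma_0,-}_{E_i}=\lambda^-_{E_i}+\sum_{v\in\Sigma_0}\tau^v_{E_i}$ and feed in the local parity lemmas of Section~\ref{s2} verbatim from the proof of Theorem~\ref{t3.2}. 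That part of your plan is correct and essentially identical to what the paper does.

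The gap is in your descent from $\lambda^-_{E_i}$ to $s_p(E_i/F)$. A control theorem for $\sel^-_p$ only identifies $s_p(E_i/F)$ with the $\Z_p$-corank of $\sel^-_p(E_i/F_{cyc})^\Gamma$, i.e.\ with the order of vanishing of the characteristic power series at $T=0$; it does \emph{not} give $s_p(E_i/F)\equiv\lambda^-_{E_i}\pmod 2$. For that you need the remaining zeros of the characteristic element to come in pairs, which is a duality statement, not a control statement. In the ordinary case this is \cite[Proposition~3.10]{Gr1}; the paper has to supply the supersingular analogue separately (Theorem~\ref{par} and Corollary~\ref{c4.9}), and it does so by constructing a nondegenerate skew-symmetric Cassels--Guo pairing on $\sel^-_p(E/F)$ following \cite{Gu,Gu1}, with the local condition $E(F_v)\otimes\Q_p/\Z_p$ replaced by $E^-(F_v)\otimes\Q_p/\Z_p$ at $v\mid p$. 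Since $\sel^-_p(E/F)=\sel_p(E/F)$ (Remark~\ref{rem}), this yields $s_p(E/F)\equiv\lambda^-_E\pmod 2$. Your proposal never invokes a pairing or functional equation, so the sentence ``the parity of the $\Z_p$-corank of the signed Selmer group over $F_{cyc}$ can be read off at the bottom layer'' is precisely the unjustified step.
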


In this article, we also consider an analogue of the above theorem in terms of root numbers of elliptic curves. 
For an elliptic curve $E$ and a prime $v$ of $F$, let $w(E/F_v)$ denote the local root number of $E$ over $F_v$ and $w(E/F)$ denote the global root number of $E$ over $F$ (see Section \ref{s5}). 

\begin{theorem}\label{t1.3}
Let $E_1$ and $E_2$ be two elliptic curves defined over $\Q$ with good reduction at $p$ {\rd and $E_1[p]\cong E_2[p]$. }
Then  $$  \dfrac{w(E_1/F)}{w(E_2/F)} =  (-1)^{\mid S_{E_1} \mid - \mid S_{E_2} \mid + \mid T \mid }  $$
\end{theorem}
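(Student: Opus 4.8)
The plan is to reduce the global root number to a product of local root numbers via the standard factorization $w(E/F) = \prod_v w(E/F_v)$, and then to compare the local factors $w(E_1/F_v)$ and $w(E_2/F_v)$ place by place, using the hypothesis $E_1[p] \cong E_2[p]$ as Galois modules. First I would fix notation: writing $w(E_i/F) = \prod_v w(E_i/F_v)$ over all places $v$ of $F$ (finite and infinite), the ratio $w(E_1/F)/w(E_2/F)$ equals $\prod_v w(E_1/F_v)/w(E_2/F_v)$, so it suffices to show that the local ratio is $+1$ outside $\Sigma_0$, is $(-1)^{\epsilon_v}$ on $S_{E_1} \triangle S_{E_2}$-type places contributing to $|S_{E_1}| - |S_{E_2}|$, and picks up an extra $-1$ exactly at the places in $T$. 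The key input is that the mod $p$ representations agree, which pins down the local root number whenever the local root number is determined by $E_i[p]$ (equivalently, by the restriction of the $p$-adic or $\ell$-adic representation mod $p$): this is automatic at the infinite places and at places of good reduction, and more delicately at places of potentially good reduction away from $p$.

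The main case analysis runs as follows. At the archimedean places $w(E_i/\R) = -1$ (and $w(E_i/\C)=-1$), so these contribute $+1$ to the ratio. At a finite place $v \nmid p$ of good reduction for both curves, $w(E_i/F_v) = +1$, contributing $+1$. At a place $v \nmid p$ where both curves have the same reduction type forced by $E_i[p]$ — in particular at places $v \notin \Sigma_0$, where by definition the conductor of $E_i[p]$ already captures the conductor of $E_i$, so both curves have either good reduction or (the relevant flavour of) multiplicative reduction consistently — the local root numbers agree and contribute $+1$; here one uses the formula $w(E/F_v) = -1$ for split multiplicative reduction, $+1$ for nonsplit multiplicative reduction, and the fact that for additive potentially multiplicative reduction the root number is a quadratic twist correction that is visible mod $p$ for $p$ odd. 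The surviving contribution therefore comes from $\Sigma_0$. On $\Sigma_0 \setminus T$, at least when the reduction types are "compatible" the difference between $w(E_1/F_v)$ and $w(E_2/F_v)$ is exactly a sign $(-1)$ whenever exactly one of $E_1, E_2$ has split multiplicative reduction at $v$, i.e. whenever $v \in S_{E_1} \triangle S_{E_2}$; summing these signs gives the factor $(-1)^{|S_{E_1}| - |S_{E_2}|}$ (note $|S_{E_1}| - |S_{E_2}| \equiv |S_{E_1} \triangle S_{E_2}| \bmod 2$ once one checks that $S_{E_1} \cap S_{E_2}$ places cancel, or more carefully that the contribution is parity-correct). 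Finally, at a place $v \in T$, by definition $\mu_p \not\subset F_v$ and one curve has multiplicative reduction while the other has additive reduction; Lemma \ref{l2.10} forces $p = 3$ there, and a direct computation of the local root number in this mixed multiplicative/additive configuration (using that the additive-reduction curve becomes multiplicative after a ramified quadratic or the relevant small-degree twist) shows the ratio $w(E_1/F_v)/w(E_2/F_v) = -1$, accounting for the extra factor $(-1)^{|T|}$.

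The hard part will be the careful bookkeeping at the places in $\Sigma_0$ and especially in $T$: one must verify that $E_1[3] \cong E_2[3]$ genuinely constrains the local root number when one curve has additive (potentially multiplicative) reduction and the other multiplicative reduction, since the local root number at such a place is not always a function of the mod $p$ representation alone — it depends on ramification data of the associated character and on whether $\mu_p \subset F_v$, which is precisely why the set $T$ is singled out. I expect the proof to lean on the explicit tables of local root numbers for elliptic curves (Rohrlich, Kobayashi, Dokchitser–Dokchitser) together with the observation that, away from $T$, the mod $p$ isomorphism determines the relevant local character up to an unramified twist that does not affect the root number; inside $T$, it determines it up to an ambiguity that contributes exactly $-1$. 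Once each local ratio is computed, multiplying over all $v$ and collecting signs yields the stated identity; I would also double-check that the finitely many remaining places $v \mid p$ contribute $+1$ to the ratio, which follows because $E_1$ and $E_2$ both have good reduction at $p$ and hence equal local root number there.
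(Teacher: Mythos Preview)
Your global-to-local reduction and overall case analysis match the paper's strategy, but there are two concrete gaps.

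First, your bookkeeping at places in $T$ is wrong. You assert that the local ratio $w(E_1/F_v)/w(E_2/F_v)$ equals $-1$ at every $v\in T$, but this is false when the multiplicative curve has \emph{split} reduction. In that case (say $E_1$ split multiplicative, $E_2$ additive, $\mu_p\not\subset F_v$, $p=3$) one computes $w(E_1/F_v)=-1$ and $w(E_2/F_v)=-1$, so the local ratio is $+1$; the formula still balances because such a $v$ lies in both $S_{E_1}$ and $T$, contributing $(-1)^{1+1}=+1$. Only in the nonsplit case does the local ratio equal $-1$. The sets $S_{E_i}$ and $T$ are not disjoint, so you cannot read off $(-1)^{|T|}$ as an independent product of local $-1$'s; the paper handles this by a case split on split versus nonsplit and on $\mu_p\subset F_v$ versus $\mu_p\not\subset F_v$ (its Lemma~\ref{l5.3}), then matches each of the four combinations to the exponent $|S_{E_1}|-|S_{E_2}|+|T|$.

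Second, you have no real argument for the additive--additive case, nor for computing $w(E_2/F_v)$ on the additive side of the multiplicative--additive case. Gesturing at ``explicit tables'' does not suffice: those tables express the root number in terms of local invariants of each curve separately, and you still need a mechanism to conclude that $E_1[p]\cong E_2[p]$ forces equality. The paper's mechanism is Deligne's modified local constant $\varepsilon_0(V)=\varepsilon(V)\det(-\mathrm{Frob}_v\mid V^{I_v})$, whose reduction mod $p$ depends only on the residual representation. For additive reduction $V^{I_v}=0$, so $\varepsilon_0(V)=\varepsilon(V)=\pm 1$, and the congruence $\varepsilon_0(V_1)\equiv\varepsilon_0(V_2)\pmod p$ forces $w(E_1/F_v)=w(E_2/F_v)$ (Lemma~\ref{l5.2}). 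The same device, combined with the explicit shape of $V_1^{I_v}$ in the multiplicative case, yields the value of $w(E_2/F_v)$ in Lemma~\ref{l5.3}. Without this $\varepsilon_0$-argument or an equivalent, the additive cases remain open in your sketch.
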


In this context, we also recall the {\it $p$-parity conjecture} which states that for every elliptic curve $E$ defined over $F$,  $$ w(E/F)= (-1)^{s_p(E/F) }.$$
{\gl This conjecture is known over $ \Q $ (\cite{Mo}, \cite{Ne}, \cite{Ki}, \cite{DD}) and for `most' elliptic curves over totally real fields (\cite{Ne2} for odd $ p $ and \cite{Bi} for $ p=2 $). }
{\al In particular, the $p$-parity conjecture together with Theorem \ref{t1.3} imply Theorem \ref{t1.1} and Theorem \ref{t1.2} irrespective of the assumptions. 
However we give independent proofs of Theorem \ref{t1.1}, Theorem \ref{t1.2} and Theorem \ref{t1.3} in this article.}

For a finite prime $v\nmid p$ of $F$ and a prime $w \mid v$ of $F_{cyc}$, let $\sigma_E^w$ denote the  $\Z_p$-rank of the Pontryagin dual of  $\hone(F_{cyc,w},E[p^\infty])$. 
{\rd Note} that the Pontryagin dual of $\hone(F_{cyc,w},{\cl E[p^\infty]})$ is a finitely generated $\Z_p$-module.
Put $\h_v(E/F_v)=\prod_{w|v} \hone(F_{cyc,w},{\cl E[p^\infty]})$ where $w$ varies over the set of primes of $F_{cyc}$ dividing $v$. {\rd Let $\Gamma = \text{Gal}(F_{cyc}/F) \cong \Z_p$ and} $\Lambda := \plim_n\Z_p[\Gamma/\Gamma^{p^n}] $ denote the Iwasawa algebra over $\Gamma$ where the inverse limit is taken with respect to the natural projection maps. 

Since there exists finitely many primes of $F_{cyc}$ dividing $v$, the Pontryagin dual  $\widehat{\h_v(E/F_v)}$ of $\h_v(E/F_v)$ is a finitely generated $\Z_p$-module. 
In particular,  $\widehat{\h_v(E/F_v)}$ is a torsion $\Lambda$-module.  
{\rd Let $\tau_{E}^v=\text{rank}_{\Z_p}\widehat{\h_v(E/F_v)}.$
Then $\tau_{E}^v$ is the Iwasawa $\lambda$-invariant of $\widehat{\h_v(E/F_v)}$.  }

\begin{theorem} \label{t1.4}
Let $E_1$ and $E_2$ be two elliptic curves defined over $\Q$ with good reduction at $p$ {\rd and $E_1[p]\cong E_2[p]$.}
Let $v$ be a finite prime of $F$ {\rd with $v\nmid p$}. 
Then $$  \dfrac{w(E_1/F_v)}{w(E_2/F_v)} = (-1)^{{\tau_{E_1}^v}-{\tau_{E_2}^v} } $$
\end{theorem}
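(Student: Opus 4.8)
The plan is to reduce the right-hand side to a purely local Galois-cohomological quantity at $v$, to compute it alongside the local root number in terms of the reduction type of $E_i$ at $v$, and then to match the two sides case by case using the congruence $E_1[p]\cong E_2[p]$.

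First I would eliminate the number of primes above $v$. Since $\Gamma$ permutes the primes $w\mid v$ of $F_{cyc}$ transitively, all the ranks $\sigma_E^w$ are equal to a common value $\sigma_E^{(v)}$, and $\tau_E^v=\sum_{w\mid v}\sigma_E^w=g_v\,\sigma_E^{(v)}$, where $g_v=[\Gamma:\Gamma_w]$ is the number of primes of $F_{cyc}$ above $v$. As $v\nmid p$ is finitely decomposed in the cyclotomic tower, the decomposition group $\Gamma_w\cong\Z_p$ is open, so $g_v=p^{n_v}$ for some $n_v\ge 0$; since $p$ is odd, $g_v$ is odd, whence $(-1)^{\tau_{E_1}^v-\tau_{E_2}^v}=(-1)^{\sigma_{E_1}^{(v)}-\sigma_{E_2}^{(v)}}$. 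Because $g_v$ depends only on $v$ and $F$ and not on the curve, this step is uniform in $E_1,E_2$, and it suffices to prove $w(E_1/F_v)/w(E_2/F_v)=(-1)^{\sigma_{E_1}^{(v)}-\sigma_{E_2}^{(v)}}$.

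Next I would compute $\sigma_E^{(v)}=\corank_{\Z_p}\hone(F_{cyc,w},E[p^\infty])$. Fixing $w\mid v$, the extension $F_{cyc,w}/F_v$ is the unramified $\Z_p$-extension, so inertia $I_v$ is unchanged and the residue quotient $\gal(\overline{\kappa_v}/\kappa_\infty)$ is topologically procyclic of order prime to $p$. Inflation-restriction then gives $\hone(F_{cyc,w},E[p^\infty])\cong \hone(I_v,E[p^\infty])^{\gal(\overline{\kappa_v}/\kappa_\infty)}$, and using $\hone(I_v,E[p^\infty])\cong (E[p^\infty]_{I_v})(-1)$ together with the fact that the prime-to-$p$ Frobenius acts on a one-dimensional piece through the Teichm\"uller part of its eigenvalue, one finds that $\sigma_E^{(v)}$ equals the number of arithmetic-Frobenius eigenvalues on the inertia-coinvariants of $T_pE$ that are congruent to $q_v$ modulo $\mathfrak p$. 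In particular $\sigma_E^{(v)}$ is governed by the reduction type of $E$ at $v$ and by $q_v\bmod p$ (equivalently, whether $\mu_p\subset F_v$): it vanishes in the additive case, where $(T_pE)^{I_v}=(T_pE)_{I_v}=0$; it is determined by $q_v\bmod p$ in the multiplicative cases; and in the good case it equals the number of Frobenius eigenvalues $\equiv 1\bmod\mathfrak p$ (equivalently, the multiplicity of $1$ as a root of the reduction of the Frobenius characteristic polynomial). Crucially, these counts are read off from $E[p]\big|_{G_{F_v}}$ alone; in the good-reduction case this already forces $\sigma_{E_1}^{(v)}=\sigma_{E_2}^{(v)}$.

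Finally I would tabulate the local root number $w(E/F_v)$ by reduction type ($+1$ for good, $-1$ for split multiplicative, $+1$ for non-split multiplicative, and the standard $\varepsilon$-factor value in the additive case) and compare it with $(-1)^{\sigma_{E_1}^{(v)}-\sigma_{E_2}^{(v)}}$ for each pair of reduction types compatible with $E_1[p]\cong E_2[p]$. By Lemma \ref{l2.10} the congruence forces the two reduction types to differ only in tightly constrained ways: the curves are simultaneously of good, of multiplicative, or of additive type, except possibly when one is multiplicative and the other additive, which can occur only for $p=3$ with $\mu_p\not\subset F_v$ — precisely the local condition defining $T$. The main obstacle is exactly this mixed multiplicative/additive case, together with the additive/additive case: here neither the root number nor $\sigma_E^{(v)}$ is visible from the crude reduction type alone, and one must compute $w(E_i/F_v)$ from the explicit local representation via the local $\varepsilon$-factor, distinguishing potentially good from potentially multiplicative reduction, and verify directly that its change of sign matches the parity change $(-1)^{\sigma_{E_1}^{(v)}-\sigma_{E_2}^{(v)}}$ dictated by the mod-$p$ inertia-coinvariants. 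In all remaining cases the comparison is immediate from the two tables, which completes the proof.
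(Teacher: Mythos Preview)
Your overall strategy matches the paper's: reduce from $\tau_{E_i}^v$ to a single $\sigma_{E_i}^w$ via the parity of the number of primes of $F_{cyc}$ above $v$ (the paper does this by citing \cite[Lemma~3.2]{Sh}; your observation that $g_v=p^{n_v}$ is odd is the same argument), and then compare the parity of $\sigma_{E_1}^w-\sigma_{E_2}^w$ with the ratio $w(E_1/F_v)/w(E_2/F_v)$ by a case analysis on reduction types, using the lemmas of Section~\ref{s2} for the $\sigma$-side and the standard root-number values together with Lemmas~\ref{l5.2}--\ref{l5.3} for the $w$-side.

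The gap is in your case enumeration. You assert that the congruence $E_1[p]\cong E_2[p]$ forces the two curves to share the same coarse reduction type at $v$ (both good, both multiplicative, or both additive), with the sole exception of a multiplicative/additive mixture ``only for $p=3$ with $\mu_p\not\subset F_v$''. This is wrong on two counts. First, good/multiplicative pairings are entirely possible and are handled in Lemmas~\ref{l2.11} and~\ref{l2.12}; in the good/split case one has $w(E_1/F_v)/w(E_2/F_v)=-1$, and verifying that $\sigma_{E_1}^w\not\equiv\sigma_{E_2}^w\pmod 2$ genuinely uses the congruence to pin down the semisimplification of $E_1[p]|_G$ (see the proof of Lemma~\ref{l2.11}) --- it cannot be read off from a table indexed only by reduction type and $q_v\bmod p$. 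The split/non-split pairing inside ``both multiplicative'' (Lemma~\ref{l2.9}) likewise requires the congruence to force $\epsilon_p=\delta$ before the parities line up. Second, for $p=3$ the multiplicative/additive combination is not shown anywhere to require $\mu_p\not\subset F_v$; Lemma~\ref{l5.3} treats both subcases, and the condition $\mu_p\not\subset F_v$ is part of the \emph{definition} of $T$, not a consequence of the congruence. So while your architecture is the right one, the case analysis you outline omits several of the cases that actually carry the content of the proof.
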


{\rd Let $T_p(E) $ denote the Tate module of $ E $ and $V_p(E) := T_p(E) \otimes \Q_p$. 
Let $v$ be a finite prime of $F$ with $v\nmid p$.  
The natural inclusion $T_p(E)\lrta V_p(E)$ induces a homomorphism $f : \hone(F_v,T_p(E))\lrta \hone(F_v,V_p(E))$. 
Next under the homomorphism $\hone(F_v,T_p(E))\lrta \hone(F_v,E[p])$, we denote $\mathcal{F}(E):= \text{Image}(\text{ker}(f) \lrta \hone(F_v,E[p]))$.  
Let $\mathcal{F}_i:=\mathcal{F}(E_i), i=1,2$ and }$$ d_v(\mathcal{F}_1,\mathcal{F}_2):= \text{dim}_{\Z/p} \mathcal{F}_1/(\mathcal{F}_1\cap \mathcal{F}_2) \enspace  \text{mod} \enspace  2.  $$ 
The invariant $d_v(\mathcal{F}_1,\mathcal{F}_2)\in \Z/2$ is the {\it Arithmetic local constant} of Mazur and Rubin \cite{MR}. 
This invariant has been extensively studied by Nekovar \cite{Ne3}. 
{\dl Assuming that $ E_1[p] $ and $ E_2[p] $ are symplectically isomophic $ G_{\Q} $-modules, he proved
$$ \dfrac{w(E_1/F_v)}{w(E_2/F_v)}=  d_v(\mathcal{F}_1,\mathcal{F}_2) $$
for every finite prime $v\nmid p$ of $F$ \cite[Theorem 2.17]{Ne3}.
Now under similar assumptions, the above equality together with Theorem \ref{t1.4} imply $$ d_v(\mathcal{F}_1,\mathcal{F}_2) =  (-1)^{{\tau_{E_1}^v}-{\tau_{E_2}^v} }. $$ }
Hence Theorem \ref{t1.4} can be considered as an Iwasawa theoretic analogue of the result of Nekovar. 

In Section \ref{s2}, we compare the parity of local Iwasawa invariants of $ E_1 $ and $ E_2 $ congruent at $ p $.
Using the parity of local Iwasawa invariants, we prove Theorem \ref{t1.1} in Section \ref{s3} and Theorem \ref{t1.2} in Section \ref{s4}. 
Finally, in Section \ref{s5} we compare the root numbers of $ E_1 $ and $ E_2 $. 
{\rd The techniques are a careful and detailed analysis of the local cohomology groups.
More precisely, we compare the local Iwasawa $\lambda$-invariants $\tau_{E_1}^v$, $\tau_{E_2}^v$ and also the local root numbers of $E_1$ and $E_2$ under congruence.
The idea of checking for variation of certain invariants attached to elliptic curves is inspired by the work of Greenberg \& Vatsal \cite{GV}. }

{\cl The authors gratefully acknowledge the support of SERB and DST, Government of India. 
We would also like to thank the referee for careful reading and useful comments which helped us improve the exposition.
This work was written up when the first author was a Postdoctoral fellow at IISER, Mohali and the third author was visiting IISER, Mohali.}

\section{Parity of local Iwasawa invariants} \label{s2}

Let $ E_1 $ and $ E_2 $ be two elliptic curves defined over $ \Q $ with good reduction at an odd prime $ p. $ 
Recall that $ E_1 $ and $ E_2 $ are congruent at the prime $ p $ if $ E_1[p] \cong E_2[p] $ as modules over $ \text{Gal}(\overline{\Q}/\Q). $

Let $ F $ be a number field and $ v \nmid p $ be a finite prime of $ F $.
Then for every prime $ w \mid v $ of $ F_{cyc} $ and an elliptic curve $ E $ defined over $ F, $ the Pontryagin dual of $ \hone(F_{cyc,w}, E[p^{\infty}]) $ is a finitely generated $ \Z_p$-module \cite[Proposition 2]{Gr}.
Let $ \sigma_{E}^{w} $ denote the $ \Z_p $-corank of $ \hone(F_{cyc,w}, E[p^{\infty}]) $.
{\bl In this section, we compare the parity of $ \sigma_{E_1}^{w} $ and $ \sigma_{E_2}^{w} $ of two congruent elliptic curves $ E_1 $ and $ E_2 $ with different reduction types at $ v $  in the series of lemmas given below.}

For an elliptic curve $ E $ over $ F $, consider the following exact sequence {\rd of Galois modules} $$ 0 \longrightarrow E[p] \longrightarrow E[p^\infty] \xrightarrow{p} E[p^\infty] \longrightarrow 0 $$ where the map $ p $ denotes multiplication by $ p $.
{\rd From the long exact sequence of Galois cohomology w.r.t.} $ G=\text{Gal}({\rd \overline{F}_{v}}/F_{cyc,w})
 $, we get the following exact sequence
\begin{equation}\label{e2.2}
0 \longrightarrow \dfrac{\hzero(G, E[p^{\infty}])}{p\hzero(G, E[p^{\infty}])} \longrightarrow \hone(G, E[p]) \longrightarrow \hone(G, E[p^{\infty}])[p] \longrightarrow 0
\end{equation} 

\begin{lemma}[\rd \bf Good \textendash \ Good]\label{l2.1}
{\rd Let $ v $ be a finite prime of $ F $ with $ v \nmid p $ and $ w $ be a prime of $ F_{cyc} $ with $ w \mid v $.}
Let $ E_1 $ and $ E_2 $  be two elliptic curves with good reduction at $ v $ and $ E_1[p] \cong E_2[p], $ then $ \sigma_{E_1}^{w} =  \sigma_{E_2}^{w}. $
\end{lemma}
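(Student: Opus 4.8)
The plan is to show that $\sigma_E^w$, the $\Z_p$-corank of $\hone(F_{cyc,w},E[p^\infty])$, depends only on $E[p]$ as a $\gal(\overline F_v/F_{cyc,w})$-module when $E$ has good reduction at $v$, so that the hypothesis $E_1[p]\cong E_2[p]$ immediately forces $\sigma_{E_1}^w=\sigma_{E_2}^w$. The key input is the exact sequence \eqref{e2.2}: since $\hone(G,E[p])$ is a finite group (it is an $\F_p$-vector space, and $G$ has finite cohomological dimension here with finite $E[p]$), the group $\hone(G,E[p^\infty])[p]$ is finite, which already tells us $\sigma_E^w$ equals the $\Z_p$-rank of the Pontryagin dual of $\hone(G,E[p^\infty])$ and that this rank can be computed $p$-adically. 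More precisely, I would argue that $\corank_{\Z_p}\hone(G,E[p^\infty]) = \dim_{\F_p}\hone(G,E[p]) - \dim_{\F_p}\bigl(\hzero(G,E[p^\infty])/p\hzero(G,E[p^\infty])\bigr)$, reading the alternating sum of $\F_p$-dimensions off \eqref{e2.2}.

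The first step is therefore to control $\hzero(G,E[p^\infty]) = E[p^\infty](F_{cyc,w})$ for $E$ with good reduction at $v\nmid p$. Because $v\nmid p$ and $E$ has good reduction at $v$, the extension $F_{cyc,w}/F_v$ is unramified (the cyclotomic $\Z_p$-extension is unramified away from $p$) with residue field a $\Z_p$-extension of the finite field at $v$; by the Néron–Ogg–Shafarevich criterion, $E[p^\infty]$ is unramified at $v$, so $E[p^\infty](F_{cyc,w})$ is determined by the action of Frobenius on $E[p^\infty]$. The subtle point is that the number of $p$-power torsion points gained in the tower is governed by the eigenvalues of Frobenius acting on the Tate module, which depends on $E$ and not just on $E[p]$. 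The resolution is that for the purpose of the \emph{corank} computation only the $\F_p$-corank matters: I claim $\dim_{\F_p}\bigl(\hzero(G,E[p^\infty])/p\hzero(G,E[p^\infty])\bigr) = \dim_{\F_p}E[p](F_{cyc,w})^{}$ is not quite what one wants; instead one uses that $E[p^\infty](F_{cyc,w})$ is a cofinitely generated $\Z_p$-module of corank $0$ (it is a torsion abelian group) and its $p$-torsion is $E[p](F_{cyc,w}) = \hzero(G,E[p])$, which depends only on $E[p]$. So in \eqref{e2.2} the outer term $\hzero(G,E[p^\infty])/p\hzero(G,E[p^\infty])$ has $\F_p$-dimension equal to $\dim_{\F_p}E[p](F_{cyc,w}) = \dim_{\F_p}\hzero(G,E[p])$, again an invariant of $E[p]$.

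Combining these, both $\dim_{\F_p}\hone(G,E[p])$ and $\dim_{\F_p}\hzero(G,E[p])$ depend only on the $\gal(\overline F_v/F_{cyc,w})$-module $E[p]$, hence so does their difference, which equals $\sigma_E^w$ by the displayed formula; since $E_1[p]\cong E_2[p]$ as $\gal(\overline\Q/\Q)$-modules (and a fortiori locally at $v$), we conclude $\sigma_{E_1}^w=\sigma_{E_2}^w$. The main obstacle I anticipate is the bookkeeping in the second step: one must be careful that "good reduction" gives genuine unramifiedness of $E[p^\infty]$ over $F_{cyc,w}$ and that the identification of $\hzero(G,E[p^\infty])[p]$ with $E[p](F_{cyc,w})$ is clean, i.e. that no extra $p$-torsion sneaks in from the $\Z_p$-direction of the tower; this is exactly where one invokes that $E[p^\infty]$ is a discrete module with finite $p$-torsion and that $G$-invariants commute with the relevant inductive limits. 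Everything else is a diagram chase through \eqref{e2.2}.
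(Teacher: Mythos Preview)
Your argument has a genuine error in the second step. You assert that $\hzero(G,E[p^\infty]) = E[p^\infty](F_{cyc,w})$ has $\Z_p$-corank $0$ ``(it is a torsion abelian group)'' and then conclude that $\dim_{\F_p}(M/pM) = \dim_{\F_p} M[p]$ for $M = \hzero(G,E[p^\infty])$. But being torsion does not force corank zero: $\Q_p/\Z_p$ is a torsion group of corank $1$. In the good-reduction case $\hzero(G,E[p^\infty])$ is in fact frequently isomorphic to $\Q_p/\Z_p$ or $(\Q_p/\Z_p)^2$ (compare the computation in Lemma~\ref{l2.11}), and for such divisible $M$ one has $M/pM = 0$ while $M[p] \ne 0$. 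So the equality $\dim_{\F_p}\bigl(\hzero(G,E[p^\infty])/p\bigr) = \dim_{\F_p}\hzero(G,E[p])$ that you need simply fails, and the quantity $\dim_{\F_p}\hone(G,E[p]) - \dim_{\F_p}\hzero(G,E[p])$ is not $\sigma_E^w$.

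The paper's proof repairs both weak points by appealing to two divisibility results. First, $\hone(G,E[p^\infty])$ is always divisible \cite[Lemma~4.5]{Gr1}, which is exactly what is needed to identify $\sigma_E^w = \corank_{\Z_p}\hone(G,E[p^\infty])$ with $\dim_{\F_p}\hone(G,E[p^\infty])[p]$; you use this identification in writing your displayed formula but never justify it. Second---and this is where the good-reduction hypothesis is actually used---$\hzero(G,E[p^\infty])$ is divisible by an argument as in \cite[Lemma~4.1.2]{EPW}, so the left-hand term in \eqref{e2.2} vanishes and one gets $\hone(G,E[p]) \cong \hone(G,E[p^\infty])[p]$ directly. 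The conclusion $\sigma_{E_1}^w = \sigma_{E_2}^w$ is then immediate from $E_1[p]\cong E_2[p]$. Your route cannot be completed without these divisibility inputs.
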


\begin{proof}
It is well known that for an elliptic curve $ E $ over $ F $, $ \hone(G, E[p^{\infty}]) $ is divisible \cite[Lemma 4.5 and the successive paragraph]{Gr1}.
{\rd Therefore} $ \text{corank}_{\Z_p}\hone(G, E[p^{\infty}])=\text{dim}_{\F_p}\hone(G, E[p^{\infty}])[p] $.
Since $ E_i $ has good reduction at $ v $, by an argument similar to \cite[Lemma 4.1.2]{EPW} one can prove that $  \hzero(G, E_i[p^{\infty}]) $ is divisible for $ i=1,2 $.
{\rd Now it follows from the exact sequence (\ref{e2.2}) that} $ \hone(G, E_i[p]) \cong \hone(G, E_i[p^{\infty}])[p],$ $ i=1,2. $
{\rd Together with} $ E_1[p] \cong E_2[p] $, we have $ \sigma_{E_1}^{w} =  \sigma_{E_2}^{w}. $
\end{proof}

\begin{lemma}[\rd \bf Additive]\label{l2.2}
{\rd Let $ v $ be a finite prime of $ F $ with $ v \nmid p $ and $ w $ be a prime of $ F_{cyc} $ with $ w \mid v $.}
Let $ E $ be an elliptic curve with additive reduction at $ v $,  then $ \sigma_{E}^{w} = 0. $
\end{lemma}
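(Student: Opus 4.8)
The plan is to show that for an elliptic curve $E/F$ with additive reduction at a finite prime $v \nmid p$, the local cohomology group $\hone(G, E[p^\infty])$ with $G = \gal(\overline{F}_v/F_{cyc,w})$ has trivial $\Z_p$-corank. First I would recall the general local Euler characteristic / duality framework: since $v \nmid p$, the group $\hat{G}_v := \gal(\overline{F}_v / F_v)$ has cohomological dimension $1$ with respect to $p$-torsion modules after we pass to the residue characteristic $\ell \ne p$ situation, and local Tate duality gives $\hone(F_{cyc,w}, E[p^\infty]) \cong \hone(F_{cyc,w}, T_p(E))^\vee$ up to the standard identifications (Cassels--Tate pairing for the module $E[p^\infty]$ against $T_p(E)$). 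The corank of $\hone(G, E[p^\infty])$ therefore equals the $\Z_p$-rank of $\hone(F_{cyc,w}, T_p(E))$, and I would reduce the whole computation to understanding this rank via the formula $\mathrm{rank}_{\Z_p}\hone = \mathrm{rank}_{\Z_p}\hzero + \mathrm{rank}_{\Z_p}\htwo$ coming from the local Euler--Poincaré characteristic being $0$ for $v \nmid p$.

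Next I would analyze $\hzero(G, V_p(E)) = V_p(E)^{G}$ where $V_p(E) = T_p(E) \otimes \Q_p$. The key input is that $F_{cyc,w}/F_v$ is a $\Z_p$-extension, hence (for $v \nmid p$) either trivial or the unramified $\Z_p$-extension of $F_v$, because the only $\Z_p$-extension of a local field of residue characteristic $\ell \ne p$ is the unramified one. So $\gal(\overline{F}_v / F_{cyc,w})$ differs from $\gal(\overline{F}_v/F_v)$ only in its (possibly smaller) unramified quotient, and the inertia subgroup $I_v$ still acts on $V_p(E)$ through the same finite/infinite image as before. For an elliptic curve with additive reduction, the inertia action on $V_p(E)$ is nontrivial and in fact $V_p(E)^{I_v} = 0$ when the reduction is additive and potentially multiplicative or potentially good with no $\ell$-part fixed — more precisely, one knows $\dim V_p(E)^{I_v} \le 1$, and additive reduction forces the semisimplification of the inertia action to have no trivial summand, so $V_p(E)^{G} \subseteq V_p(E)^{I_v} = 0$. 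Hence $\hzero(G, V_p(E)) = 0$, and by local duality $\htwo(G, V_p(E)) \cong \hzero(G, V_p(E)^*(1))^\vee$; the twist by the cyclotomic character does not create invariants here either since $V_p(E)$ is self-dual up to twist and again the inertia obstruction persists, giving $\htwo(G, V_p(E)) = 0$.

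Combining, $\mathrm{rank}_{\Z_p}\hone(F_{cyc,w}, T_p(E)) = \mathrm{rank}_{\Z_p}\hzero + \mathrm{rank}_{\Z_p}\htwo = 0$, whence $\sigma_E^w = \corank_{\Z_p}\hone(G, E[p^\infty]) = 0$. The step I expect to be the main obstacle is pinning down precisely that additive reduction implies $V_p(E)^{I_v} = 0$ and that this survives on the $\htwo$-side after the cyclotomic twist; this requires a careful case split according to the type of additive reduction (potentially good versus potentially multiplicative) and invoking the description of the local Galois representation on $V_p(E)$ at places of additive reduction — essentially the content of the Néron--Ogg--Shafarevich criterion together with the classification of inertia images. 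Once that vanishing is established, the Euler-characteristic bookkeeping is routine, and I would cite the relevant parts of Greenberg's local computations (\cite{Gr}, \cite{Gr1}) for the divisibility and finite-generation facts already used in Lemma \ref{l2.1}.
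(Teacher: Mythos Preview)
Your approach differs substantially from the paper's. The paper argues by passing to the larger field $F' = F_v(\mu_{p^\infty})$: since $F_{cyc,w} \subset F'$, inflation--restriction gives an injection $\hone(G, E[p^\infty]) \hookrightarrow \hone(H, E[p^\infty])$ with $H = \gal(\overline{F}_v/F')$; additive reduction persists over the unramified-outside-$p$ extension $F'$, and then \cite[Lemma 3.5]{Sh} gives $\hone(H, E[p^\infty]) = 0$ outright. This is a two-line reduction to an already-established vanishing result.

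Your route via $V_p(E)^{I_v} = 0$ is sound at its core --- that vanishing for additive reduction is standard (it is essentially the conductor-exponent formula, and the paper itself cites \cite[Theorem 4.10.2]{Si1} for it in the proof of Lemma \ref{l5.2}), so the step you flag as the main obstacle is in fact routine and needs no case split. The real gap is in your scaffolding: local Tate duality in the form $\hone(K, E[p^\infty]) \cong \hone(K, T_p(E))^\vee$ and the local Euler--Poincar\'e formula are theorems about \emph{finite} extensions of $\Q_\ell$, whereas $F_{cyc,w}$ is the unramified $\Z_p$-extension of $F_v$, hence infinite. The Pontryagin dual of $\varinjlim_n \hone(F_{n,w}, E[p^\infty])$ is $\varprojlim_n \hone(F_{n,w}, T_p(E))$, which is not $\hone(F_{cyc,w}, T_p(E))$, and the $\mathrm{H}^2$-term behaves differently in the limit (the $p$-primary part of $\mathrm{Br}(F_{cyc,w})$ is already zero). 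The clean fix is to bypass duality entirely and use directly the equality $\text{corank}_{\Z_p} \hone(G, E[p^\infty]) = \text{corank}_{\Z_p} \hzero(G, E[p^\infty])$, which is precisely \cite[Proposition 2]{Gr} and is what the paper itself invokes in Lemmas \ref{l2.8}, \ref{l2.11}, and \ref{l2.12}. With that substitution your argument goes through in one line: $V_p(E)^{I_v} = 0$ forces $\text{corank}_{\Z_p}\hzero(G,E[p^\infty]) = 0$, hence $\sigma_E^w = 0$.
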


\begin{proof}

{\cl Let $ F^\prime :=F_v(\mu_{p^{\infty}}) $ and $ H=\text{Gal}(\overline{F_v}/F^\prime) $ where $\mu_{p^{\infty}} $ denotes the group of all $ p $-power roots of unity.
Now from the inflation-restriction exact sequence, we have $ \hone(G, E[p^{\infty}]) \subseteq \hone(H, E[p^{\infty}]). $ }
Since {\rd $ F^\prime $} is unramified outside $ p, $ the reduction type does not change for $ E $ over {\rd $ F^\prime $} \cite[Chapter 7, Proposition 5.4]{Si}.
{ \cl Hence from \cite[Lemma 3.5]{Sh}, we get $ \hone(H, E[p^{\infty}])=0 $ which implies $ \hone(G,E[p^\infty])=0 $ i.e. $ \sigma_{E}^{w}=0. $ }

\end{proof}

\begin{lemma}[\rd \bf Good \textendash \ Additive]\label{l2.3}
{\rd Let $ v $ be a finite prime of $ F $ with $ v \nmid p$.}
Then it is not possible to have elliptic curves $ E_1 $ and $ E_2 $ such that $ E_1 $ has good reduction at $ v, E_2 $ has additive reduction at $ v $ and $ E_1[p] \cong E_2[p] $.
\end{lemma}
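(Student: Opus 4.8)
The plan is to argue by contradiction using the Galois module structure of the $p$-torsion at $v$. Suppose such $E_1$ and $E_2$ exist, with $E_1$ having good reduction at $v$, $E_2$ having additive reduction at $v$, and $E_1[p] \cong E_2[p]$ as $\gal(\overline{F}/F)$-modules, hence in particular as $\gal(\overline{F_v}/F_v)$-modules. First I would record the two standard facts about inertia action. Since $v \nmid p$ and $E_1$ has good reduction at $v$, the criterion of N\'eron--Ogg--Shafarevich gives that the inertia subgroup $I_v \subset \gal(\overline{F_v}/F_v)$ acts trivially on $E_1[p]$ (indeed on the full Tate module). On the other hand, since $E_2$ has additive reduction at $v$, the inertia $I_v$ acts nontrivially on $E_2[p]$: if $I_v$ acted trivially on $E_2[p]$, then by the criterion of N\'eron--Ogg--Shafarevich together with the theory of the $\ell$-adic representation (or Raynaud's criterion, since $p$ is odd and $p \geq 3$ ensures $p > e$ here with $e = 1$ as $v \nmid p$), $E_2$ would have good (or at worst multiplicative after the Tate-curve analysis) reduction, contradicting additive reduction. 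This is precisely the statement that an elliptic curve with additive reduction at $v \nmid p$ has a ramified mod-$p$ representation when $p \geq 3$.

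The heart of the argument is therefore to combine these two facts: the isomorphism $E_1[p] \cong E_2[p]$ of $\gal(\overline{F_v}/F_v)$-modules forces $I_v$ to act on $E_2[p]$ exactly as it acts on $E_1[p]$, namely trivially. This contradicts the conclusion of the previous paragraph that $I_v$ acts nontrivially on $E_2[p]$. Hence no such pair of curves can exist.

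The step I expect to be the main (indeed only) obstacle is the precise justification that additive reduction at $v \nmid p$ implies ramified mod-$p$ representation for odd $p$; this is where the hypothesis $p$ odd is used. I would handle it by invoking the standard classification: for $v \nmid p$, the action of $I_v$ on $T_p(E) \otimes \Q_p$ factors through a finite quotient, and $E$ has good reduction iff this action is trivial, potentially good reduction iff it has finite image, and the reduction becomes good over the (tamely ramified, since $v \nmid p$) extension cutting out the $I_v$-action on $E[p]$ when $p \geq 3$ (because $p = 3, 4$ cases of the semistable reduction theorem of Serre--Tate give that $E[p]$ with $p \geq 3$ unramified forces semistability, and combined with the Tate parametrization one rules out additive). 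In other words: $E_2[p]$ unramified at $v$ and $p \geq 3$ implies $E_2$ has semistable reduction at $v$; but an elliptic curve with semistable \emph{and} additive reduction has good reduction, whose $I_v$-action is trivial on all torsion, so in fact $E_2$ cannot have additive reduction at all. Everything else is a direct comparison of inertia actions through the given isomorphism. As a remark, this lemma is exactly the reason why the set $T$ in the introduction concerns the multiplicative-versus-additive dichotomy rather than good-versus-additive, and why $T$ can only be nonempty when $p = 3$: the analogous rigidity fails precisely at $p = 3$ for the multiplicative/additive comparison (cf. Lemma \ref{l2.10}).
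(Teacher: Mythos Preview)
Your proposal is correct, but the argument differs from the paper's. You reduce to the standard fact that, for $v\nmid p$ and $p\ge 3$, unramifiedness of $E[p]$ at $v$ forces semistable reduction (via the semistable reduction theorem: $E$ acquires semistable reduction over any $L/F_v$ with $E[m]\subset E(L)$ for some $m\ge 3$ prime to the residue characteristic; take $L\subset F_v^{nr}$ and use invariance of reduction type under unramified extensions). One caution: your mention of ``Raynaud's criterion'' with $p>e$ is out of place---that criterion concerns finite flat group schemes at residue characteristic $p$, not the $\ell\ne p$ situation at hand; the correct citation is the semistable reduction theorem (SGA~7/Serre--Tate), or the explicit description of inertia on $T_p$ for potentially good and potentially multiplicative types.

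The paper instead gives a self-contained, elementary argument via the N\'eron model. From $E_1[p]\cong E_2[p]$ unramified, one has $E_2[p]\subset E_2(L)$ for a finite unramified $L/F_v$. Since additive reduction persists over $L$, the component group $E_2(L)/E_{2,0}(L)$ has order at most $4$ \cite[VII.6.1]{Si}, so some nontrivial $P\in E_2[p]$ lies in $E_{2,0}(L)$. The kernel of reduction has no $p$-torsion (as $v\nmid p$), so $P$ maps to a nontrivial $p$-torsion element of $\widetilde{E}_{2,ns}(\overline{\F_v})\cong \overline{\F_v}$, which is $p$-torsion free---a contradiction. Your route is quicker if one is willing to quote the semistable reduction theorem as a black box; the paper's route is more hands-on and avoids that citation, at the cost of unpacking the N\'eron model filtration explicitly.
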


\begin{proof}

{ \rd Since $ E_1 $ has good reduction at $ v $ and $ E_1[p] \cong E_2[p] $, $ E_2[p] $ is unramified at $ v. $
Hence we have $ E_2[p] \subset E_2(F_v^{nr}) $ where $ F_v^{nr} $ denotes the maximal unramified extension of $ F_v. $ 
Since $ E_2[p]  $ has got finitely many points, there exists a finite extension $ L $ of $ F_v $ contained in $ F_v^{nr} $ such that $ E_2[p] \subset E_2(L) \subset E_2(F_v^{nr}). $
Let $ E:=E_2. $
Consider the following exact sequences
\begin{equation} \label{e2.11}
0 \longrightarrow E_k(F_v^{nr}) \longrightarrow E_0(F_v^{nr}) \longrightarrow \widetilde{E}_{ns}({\cl \overline{\F_v}}) \longrightarrow 0
\end{equation}
\begin{equation}
0 \longrightarrow E_0(L) \longrightarrow E(L) \longrightarrow E(L)/E_0(L) \longrightarrow 0
\end{equation}
where {\cl $ \F_v $ denotes the residue field at $ v $ and $ \widetilde{E}_{ns}(\overline{\F_v}) $} denotes the set of nonsingular points of $ \widetilde{E} $ over {\cl $ \overline{\F_v} $}, $ E_0  $ is the set of points with nonsingular reduction and $ E_k $ is the kernel of reduction.

By \cite[Theorem 7.6.1]{Si}, the group $ E(L)/E_0(L) $ has at most 4 elements. 
Since there are $ p^2 \geq 9 $ elements in $ E[p], $
there is a non-trivial element in $ E[p] $ whose image in $ E(L)/E_0(L) $ is trivial. 
Let $ P $ denote this element. 
Then $ P \in E_0(L). $ 
Therefore, the torsion element $ P \in E_0(F_v^{nr}). $

Since the group $ E_k(F_v^{nr}) $ is {\cl $ p $-torsion} free \cite[Proposition 7.3.1]{Si}, it follows from the exact sequence (\ref{e2.11}) that the image of $ P $
in $ \widetilde{E}_{ns}({\cl \overline{\F_v}}) $ is non-trivial. 
If $ E $ has additive reduction, then $ \widetilde{E}_{ns}({\cl \overline{\F_v}}) \cong {\cl \overline{\F_v}} $, as an additive group, which is also a {\cl $ p $-torsion} free group. 
We therefore arrive at a contradiction since the image of $ P $ is a non-trivial torsion element of $ \widetilde{E}_{ns}({\cl \overline{\F_v}}) $. 
It follows that $ E $ cannot have additive reduction at $ v. $ }

\end{proof}

Assume $E$ has multiplicative reduction at $p$.
{\cl Let} {\rd $ v $ be a finite prime of $ F $ with $ v \nmid p $ {\cl and $ G_v, I_v $ denote the decomposition and inertia groups of $ v $ over $ F $ respectively}.}
Let $\delta : G_v/I_v \longrightarrow \{\pm 1\} $ be the unique non-trivial unramified quadratic character of $ G_v $ if $ E $ has non-split multiplicative reduction, and let $\delta$ be the trivial character if $ E $ has split multiplicative reduction. 
{\cl Let $ \rho_{E,p}: \text{Gal}(\overline{\Q}/F) \longrightarrow \text{GL}_2(\Z_p) $ be the Galois representation attached to the Tate module of $ E $ over $ F $ and $ \overline{\rho}_{E,p} $ be the reduction modulo $ p $ of $ \rho_{E,p}. $  }
Then from \cite[Proposition 2.12]{DDT}, we have
$$  \rho_{E,p}\mid_{G_v}   \enspace \sim
     \left( {\begin{array}{cc}
      {\cl \omega_p} & * \\
      0 & 1 \\
     \end{array} } \right) \otimes \delta $$ where $ {\cl \omega_p} $ is the $ p $-adic cyclotomic character and 
$$ \overline{\rho}_{E,p}\mid_{G_v}   \enspace \sim
     \left( {\begin{array}{cc}
      \epsilon_p & \psi \\
      0 & 1 \\
     \end{array} } \right) \otimes \delta $$
where {\rd the mod $ p $ cyclotomic character is denoted by $ \epsilon_p $} {\cl and $ \psi \in \hone(G_{\Q_{l}}, \Z/p\Z \, (\epsilon_p)) \cong \Q_{l}^{\times}/(\Q_{l}^{\times})^{p} $ corresponds to the Tate period $ q_{E,l} \in \Q_{l}^{\times}/(\Q_{l}^{\times})^{p} $ for $ v \mid l $.}

{\cl Let $ \chi_p $ be the Teichmuller character and we identify it with $ \epsilon_p $ via the natural embedding $ \F_p^{*} \subset \Z_p^{*}. $ 
Then we have $ \omega_p \mid_{G} \,=\epsilon_p $ which implies}
 \begin{equation}\label{e2.3}
     \rho_{E,p}\mid_{G}   \enspace \sim
     \left( {\begin{array}{cc}
      \epsilon_p & * \\
      0 & 1 \\
     \end{array} } \right) \otimes \delta
 \end{equation}
 and
\begin{equation}\label{e2.4}
 \overline{\rho}_{E,p}\mid_{G}   \enspace \sim
     \left( {\begin{array}{cc}
      \epsilon_p & \psi \\
      0 & 1 \\
     \end{array} } \right) \otimes \delta
\end{equation}

\begin{remark}
Since { \rd $ F_{\text{cyc},w} $ is an unramified $ \Z_p $-extension of $ F_v $ for $ v \nmid p $ and $ w \mid v $ }, $ \epsilon_p \mid_G $  and $ \delta \mid_G $ are non-trivial iff $ \epsilon_p \mid_{G_v} $  and $ \delta \mid_{G_v} $ are non-trivial.
\end{remark}

\begin{lemma}[\rd \bf Multiplicative \textendash \ Additive]\label{l2.10}
Let $ p \geq 5 $ and {\rd $ v $ be a finite prime of $ F $ with $ v \nmid p $. }
Then it is not possible to have elliptic curves $ E_1 $ and $ E_2 $ such that $ E_1 $ has multiplicative reduction at $ v, E_2 $ has additive reduction at $ v $ and $ E_1[p] \cong E_2[p] $.
\end{lemma}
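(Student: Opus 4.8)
The plan is to derive a contradiction by comparing the ramification of $E_1[p]$ and $E_2[p]$ as $G_v$-modules, using the explicit description of the local Galois representation at a prime of multiplicative reduction recorded in \eqref{e2.3}--\eqref{e2.4}. Suppose such $E_1$ and $E_2$ exist. Since $E_1$ has multiplicative reduction at $v$, writing $v \mid \ell$, equation \eqref{e2.4} (applied over $F_v$, i.e.\ with $G_v$ in place of $G$) gives that $\overline\rho_{E_1,p}\mid_{G_v}$ is an extension of the trivial character (twisted by $\delta$) by $\epsilon_p$ (twisted by $\delta$), with extension class $\psi$ corresponding to the Tate period $q_{E_1,\ell}$. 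On the other hand $E_2$ has additive reduction, and I would invoke the fact (as in the proof of Lemma \ref{l2.3}, or via the Néron--Ogg--Shafarevich criterion combined with the classification of potentially multiplicative vs.\ potentially good reduction) that $E_1[p]\cong E_2[p]$ forces a strong constraint on the inertia action. The key observation is that for $p\ge 5$, if $E_2$ has additive reduction then $E_2[p]$ is ramified at $v$ (otherwise $E_2$ would have good reduction by Néron--Ogg--Shafarevich, contradicting Lemma \ref{l2.3}-type reasoning), and moreover the image of inertia $I_v$ in $\mathrm{GL}_2(\F_p)$ acting on $E_2[p]$ cannot be the specific unipotent-or-$\epsilon_p$-shaped group that \eqref{e2.4} dictates for multiplicative reduction.

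More precisely, here is the route I would take. First, note $\overline\rho_{E_1,p}\mid_{I_v}$ has image conjugate to either the trivial group (split/non-split with $p$-th power Tate period) or a group of order $p$ generated by $\left(\begin{smallmatrix}1&*\\0&1\end{smallmatrix}\right)$ — in all cases the inertia image is a $p$-group (a quotient of the tame quotient of order dividing $p$), since $\epsilon_p$ is unramified at $v\nmid p$. Second, analyze $\overline\rho_{E_2,p}\mid_{I_v}$ for $E_2$ additive: by potential good vs.\ potential multiplicative reduction one splits into cases. If $E_2$ has potentially good reduction, then $E_2$ acquires good reduction over a finite extension whose degree is prime to $p$ when $p\ge 5$ (the relevant extension has degree dividing $24$, hence prime to $p$), so $I_v$ acts on $E_2[p]$ through a finite quotient of order prime to $p$ — a nonzero such action is impossible to match with the $p$-group image of $\overline\rho_{E_1,p}\mid_{I_v}$ unless both are trivial, but triviality of $\overline\rho_{E_2,p}\mid_{I_v}$ means $E_2[p]$ unramified, contradicting Lemma \ref{l2.3}. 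If instead $E_2$ has potentially multiplicative reduction (additive but becoming multiplicative over a quadratic ramified extension), then $E_2$ is a quadratic twist of a curve with multiplicative reduction by a ramified quadratic character $\eta$; the condition $E_1[p]\cong E_2[p]$ would then say two multiplicative-type mod-$p$ representations differ by $\eta$, and comparing determinants (both equal $\epsilon_p$) forces $\eta^2 = 1$ trivially but comparing the unramified-quotient characters $\delta_1$ vs.\ $\delta_2\cdot\eta$ on inertia forces $\eta$ unramified, contradicting that $\eta$ is ramified.

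The main obstacle I anticipate is the careful bookkeeping in the potentially-good case: one must be sure that the extension over which $E_2$ attains good reduction has degree prime to $p$ precisely because $p\ge 5$ (this is where the hypothesis is used and where the statement genuinely fails for $p=3$, consistent with the remark after the definition of $T$ that $T$ can be nonempty only for $p=3$). I would handle this by citing the standard classification of the possible inertia images / Kodaira types (e.g.\ via the fact that for $p \ge 5$ the prime-to-$p$ part of $|E_2(\overline{F_v})[\text{tors}]|$ and the Néron component group control everything, and the potential-good-reduction extension has degree in $\{2,3,4,6\}$), after which matching a prime-to-$p$ inertia image with a $p$-group inertia image is immediate. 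The potentially-multiplicative sub-case is then a short twisting argument as sketched, and the unramified sub-case is already excluded by Lemma \ref{l2.3}. Assembling these, no pair $(E_1,E_2)$ as in the statement can exist.
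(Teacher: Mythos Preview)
Your argument is correct and proceeds by a genuinely different route from the paper's. The paper does not split into potentially-good versus potentially-multiplicative cases; instead it first reduces non-split to split by an unramified quadratic base change, then passes to $F' = F_v(\mu_p)$, observes that reduction types are preserved since $F'/F_v$ is unramified at $v$, and invokes \cite[Proposition~5.1]{HM}: for $p \ge 5$ and $E_2$ of additive reduction at $v \nmid p$ one has $E_2[p^\infty](F_v(\mu_{p^\infty})) = 0$, whence $E_2(F'_{cyc,w})[p] = 0$. On the other hand (\ref{e2.4}) with $\epsilon_p|_{G''}$ trivial shows $E_1(F'_{cyc,w})[p] \neq 0$, contradicting $E_1[p] \cong E_2[p]$. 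Your approach replaces this citation of Hachimori--Matsuno by a direct inertia analysis: the multiplicative curve has $p$-group inertia image on $E_1[p]$, while the additive curve has either prime-to-$p$ inertia image (potentially good case, using that the semistability defect divides $24$) or an inertia image containing an element of order $2$ coming from the ramified quadratic twist $\eta$ (potentially multiplicative case), and neither can match a nontrivial $p$-group; the remaining possibility that both inertia images are trivial is excluded exactly by the proof of Lemma~\ref{l2.3}. Your route is more self-contained and makes the role of the hypothesis $p \ge 5$ completely transparent, at the cost of a longer case division; the paper's argument is quicker once \cite[Proposition~5.1]{HM} is granted, though the proof of that proposition ultimately rests on the same inertia considerations you carry out by hand.
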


\begin{proof}

First, {\rd suppose that} $ E_1 $ has split multiplicative reduction at $ v $.
Let $ F^\prime:={\rd F_v(\mu_p)} $ and $ G^{\prime\prime}=\text{Gal}({\rd \overline{F}_{v}}/F^\prime_{cyc,w}) $.
Since $ F^\prime $ is unramified outside $ p, $ the reduction type at $ v $ does not change for $ E_1 $ and $ E_2 $ over $ F^\prime $ \cite[Chapter 7, Proposition 5.4]{Si}.
{\bl Since $ p \geq 5, $ from \cite[Proposition 5.1]{HM} we have $ E_2[p^\infty](F_v(\mu_{p^{\infty}}))=0 $ hence $ E_2(F^\prime_{cyc,w})[p] = 0 $.
{\cl It follows from} } (\ref{e2.4}) that $$ \overline{\rho}_{E_1,p}\mid_{G^{\prime\prime}} \enspace   \sim
   \left( {\begin{array}{cc}
    1 & \psi \\
    0 & 1 \\
   \end{array} } \right)  $$ 
   which implies $ E_1(F^\prime_{cyc,w})[p] \neq 0, $ a contradiction to the fact that $ E_1[p] \cong E_2[p]. $
Hence it is not possible to have elliptic curves $ E_1 $ and $ E_2 $ with one having split multiplicative reduction and other having additive reduction at $ v $.

Next, let $ E_1 $ have non-split multiplicative reduction at $ v $ and $ F^{\prime\prime} $ be the quadratic, unramified extension of $ F $ such that $ E_1 $ has split multiplicative reduction at $ v $ over $ F^{\prime\prime} $.
The reduction type of $ E_2 $ at $ v $ does not change since $ F^{\prime\prime} $ is unramified.
{\cl However, the arguments in the preceding paragraph shows that it is not possible.
Hence there does not exist elliptic curves with the above reduction types at $ v. $}

\end{proof}

\begin{lemma}[\rd \bf Split]\label{l2.7}
{\rd Let $ v $ be a finite prime of $ F $ with $ v \nmid p $ and $ w $ be a prime of $ F_{cyc} $ with $ w \mid v $.}
Let $ E $ be an elliptic curve with split multiplicative reduction at $ v $. 
From the representation (\ref{e2.3}), we have 
\begin{equation}\label{e2.5}
\rho_{E,p}\mid_{G}  \enspace \sim
   \left( {\begin{array}{cc}
    \epsilon_p & * \\
    0 & 1 \\
   \end{array} } \right).
\end{equation}
Then $ \sigma_{E}^{w} =  0 $ if $ \epsilon_p \neq 1 $ otherwise  $ \sigma_{E}^{w}=1. $
\end{lemma}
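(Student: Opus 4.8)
Write $G=\gal(\ol{F}_v/F_{cyc,w})$. The plan is to read off $\sigma_E^w$ from the Tate parametrisation of $E$ together with the exact sequence (\ref{e2.2}), and thereby reduce it to a computation of $\corank_{\Z_p}\hone(G,\Q_p/\Z_p)$.

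Since $E$ has split multiplicative reduction at $v$, the Tate uniformisation gives a short exact sequence of $G_{F_v}$-modules
\begin{equation}\label{e2.tate}
0\lrta \mu_{p^\infty}\lrta E[p^\infty]\lrta \Q_p/\Z_p\lrta 0,
\end{equation}
with trivial action on the quotient $\Q_p/\Z_p$; this is the divisible refinement of (\ref{e2.5}). Restricting (\ref{e2.tate}) to $G$ and using that $F_{cyc,w}$ is the cyclotomic $\Z_p$-extension of $F_v$ -- so that $\mathrm{H}^2(G,\mu_{p^m})=\varinjlim_n\mathrm{H}^2(F_{n,w},\mu_{p^m})=0$, the transition maps being multiplication by the residue degree $p$ on $\mathrm{H}^2\cong\Z/p^m$, whence $\mathrm{cd}_p(G)\leq1$ -- the long exact cohomology sequence yields
$$\Q_p/\Z_p\longby{\partial}\hone(G,\mu_{p^\infty})\lrta \hone(G,E[p^\infty])\lrta \hone(G,\Q_p/\Z_p)\lrta 0 .$$
By Kummer theory $\hone(G,\mu_{p^\infty})\cong F_{cyc,w}^\times\otimes_{\Z}\Q_p/\Z_p$, which is divisible of $\Z_p$-corank $1$ (only the valuation contributes, the unit part dying after $\otimes\,\Q_p/\Z_p$), and the connecting map $\partial$ is, up to sign, the Kummer class of the Tate period $q_E\in F_{cyc,w}^\times$. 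As $v(q_E)>0$, composing $\partial$ with the valuation $F_{cyc,w}^\times\otimes_{\Z}\Q_p/\Z_p\surj\Q_p/\Z_p$ is multiplication by $v(q_E)\neq0$, so $\Ker\partial$ is finite and $\Img\partial$ has corank $1$. Counting $\Z_p$-coranks in the displayed sequence then forces $\hone(G,E[p^\infty])\to\hone(G,\Q_p/\Z_p)$ to have finite kernel, giving
$$\sigma_E^w=\corank_{\Z_p}\hone(G,E[p^\infty])=\corank_{\Z_p}\hone(G,\Q_p/\Z_p).$$
(The same conclusion can be reached directly from (\ref{e2.2}) by computing $\dim_{\F_p}\hone(G,E[p])$ from (\ref{e2.4}) and $\dim_{\F_p}\bigl(\hzero(G,E[p^\infty])/p\bigr)$; the two contributions differ precisely by $\corank_{\Z_p}\hone(G,\Q_p/\Z_p)$.)

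It remains to compute $\corank_{\Z_p}\hone(G,\Q_p/\Z_p)$, which equals the $\Z_p$-rank of the maximal abelian pro-$p$ quotient of $G$. Because $v\nmid p$ there is no wild ramification over $F_{cyc,w}$, and the residue field of $F_{cyc,w}$ has absolute Galois group with trivial pro-$p$ part, so every abelian pro-$p$ extension of $F_{cyc,w}$ is tamely ramified; such an extension of exponent $p$ exists iff $\mu_p\subset F_{cyc,w}$, and when it does these extensions assemble into a single $\Z_p$-tower (adjoining $p$-power roots of a uniformiser, using $\mu_{p^\infty}\subset F_{cyc,w}$). Since $F_{cyc,w}/F_v$ is pro-$p$ while $[F_v(\mu_p):F_v]$ divides $p-1$, we get $\mu_p\subset F_{cyc,w}\iff\mu_p\subset F_v\iff\epsilon_p|_G=1$ by the Remark preceding this lemma. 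Hence $\corank_{\Z_p}\hone(G,\Q_p/\Z_p)$ is $0$ if $\epsilon_p\neq1$ and $1$ if $\epsilon_p=1$, which is the assertion. Alternatively, when $\epsilon_p=1$ one has $\mu_p\subset F_v$ and $\sigma_E^w=1$ follows from \cite[Lemma 3.5]{Sh}; the case $\epsilon_p\neq1$ then reduces to it by restricting to $G''=\gal(\ol{F}_v/F'_{cyc,w'})$ with $F'=F_v(\mu_p)$, an open subgroup of $G$ of index prime to $p$, so that $\hone(G,E[p^\infty])=\hone(G'',E[p^\infty])^{\Delta}$ with $\Delta=G/G''$ acting on the corank-one divisible group $\hone(G'',E[p^\infty])\cong\Q_p/\Z_p$ through a non-trivial character of order prime to $p$, forcing the invariants to be finite.

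The main obstacles are: (i) identifying the connecting map $\partial$ with the Kummer class of the Tate period $q_E$, which is exactly what makes $\Ker\partial$ finite and kills the $\mu_{p^\infty}$-contribution; and (ii) the local class field theory input describing the abelian pro-$p$ extensions of the infinite local field $F_{cyc,w}$, i.e. the evaluation of $\corank_{\Z_p}\hone(G,\Q_p/\Z_p)$. The remaining steps are routine homological bookkeeping with the exact sequences (\ref{e2.2}) and (\ref{e2.tate}).
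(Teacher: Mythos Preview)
Your argument is correct and takes a genuinely different route from the paper. The paper treats the two cases separately and more directly from the matrix form: for $\epsilon_p=1$ it simply cites \cite[Lemma~3.3]{Sh} (not Lemma~3.5 as in your alternative remark), while for $\epsilon_p\neq1$ it observes from (\ref{e2.5}) that $\hzero(G,V_p(E))=0$ (the condition $\epsilon_p\neq1$ kills fixed vectors with trivial second coordinate, and the ramified entry $*\neq0$ on inertia kills the rest), deduces that $\hzero(G,E[p^\infty])$ is finite via $0\to T_p(E)\to V_p(E)\to E[p^\infty]\to0$, and then invokes the argument of \cite[Lemma~3.5]{Sh} to conclude $\hone(G,E[p^\infty])=0$. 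Your approach through the Tate exact sequence $0\to\mu_{p^\infty}\to E[p^\infty]\to\Q_p/\Z_p\to0$, the identification of $\partial$ with the Kummer class of $q_E$, and the reduction to $\corank_{\Z_p}\hone(G,\Q_p/\Z_p)$ is more uniform and more explanatory --- it handles both cases at once and makes visible \emph{why} the answer is $1$ exactly when the tame $\Z_p$-tower over $F_{cyc,w}$ exists --- at the price of more local machinery ($\mathrm{cd}_p(G)\le1$, Kummer theory, the structure of abelian pro-$p$ extensions of $F_{cyc,w}$). One small point on your alternative reduction via $\Delta$-invariants: the assertion that $\Delta$ acts on $\hone(G'',E[p^\infty])\cong\Q_p/\Z_p$ through a non-trivial character is correct but deserves a word --- the $\Delta$-equivariant surjection onto $\hone(G'',\Q_p/\Z_p)$ together with the identification $\Q_p/\Z_p\cong\mu_{p^\infty}(-1)$ over $F'_{cyc,w'}\supset\mu_{p^\infty}$ shows the action is by $\epsilon_p^{-1}\neq1$.
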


\begin{proof}

Let $ \epsilon_p \neq 1 $ i.e. $ \mu_p \nsubseteq F_v $.
{\rd Recall that} $ T_p(E) $ denote the Tate module of $ E $ and $ V_p(E)= T_p(E) \otimes \Q_p $ be the  representation corresponding to (\ref{e2.5}). 
Then $ V_p(E) / T_p(E) \cong E[p^\infty]. $
Since $ \rho_{E,p} $ is {\cl ramified}, we have $ * \neq 0 $ which implies $ \hzero(G, V_p(E))=0. $ 
Consider the following exact sequence
$$ 0 \longrightarrow T_p(E) \longrightarrow V_p(E) \longrightarrow E[p^\infty] \longrightarrow 0. $$
Let $ J $ be the extension $ F_v(E[p^\infty]) $ of $ F_v $ obtained by adjoining the coordinates of points in $ E[p^\infty]. $
 For $ G^\prime=\text{Gal}(J/F_{cyc,w}) $, we have 
$$ \hzero(G^\prime,V_p(E)) \longrightarrow \hzero(G^\prime,E[p^\infty]) \longrightarrow \hone(G^\prime,T_p(E)).  $$
Since $ \hzero(G^\prime, V_p(E)) = \hzero(G, V_p(E))=0, \, \hone(G^\prime,T_p(E)) $ is a finitely generated $ \Z_p $-module and $ \hzero(G^\prime,E[p^\infty]) $ is a torsion $ \Z_p $-module, we have $ \hzero(G,E[p^\infty])=\hzero(G^\prime,E[p^\infty]) $ to be finite.
Now by arguments similar to the proof of \cite[Lemma 3.5]{Sh}, we get $ \hone(G,E[p^\infty])=0 $ i.e. $ \sigma_{E}^{w}=0. $
For $ \epsilon_p = 1 $ i.e. $ \mu_p \subseteq {\rd F_v} $, the proof of $ \sigma_{E}^{w}=1 $ is given in \cite[Lemma 3.3]{Sh}.

\end{proof}

\begin{lemma}[\rd \bf Non-split]\label{l2.8}
{\rd Let $ v $ be a finite prime of $ F $ with $ v \nmid p $ and $ w $ be a prime of $ F_{cyc} $ with $ w \mid v $.}
Let $ E $ be an elliptic curve with non-split multiplicative reduction at $ v $. 
Then $ \sigma_{E}^{w} =  0 $ if $ \epsilon_p \neq \delta $ otherwise  $ \sigma_{E}^{w}=1. $
\end{lemma}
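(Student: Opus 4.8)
The plan is to treat the two cases $\epsilon_p\neq\delta$ and $\epsilon_p=\delta$ separately: the first by imitating the proof of Lemma \ref{l2.7}, the second by twisting down to the split multiplicative situation already settled there. Throughout, $F''$ denotes the unramified quadratic extension of $F_v$, so that (by definition of $\delta$ in the non-split case) $\delta$ is exactly the character cutting out $F''$, and $E$ acquires split multiplicative reduction over $F''$.

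\emph{Case $\epsilon_p\neq\delta$.} Since $F_{cyc,w}/F_v$ is a pro-$p$ extension and $p$ is odd, the characters $\epsilon_p\delta$ and $\delta$ (whose orders divide $p-1$, resp.\ equal $2$) remain non-trivial on $G$, the first because $\epsilon_p\mid_{G_v}\neq\delta$ (this is the remark preceding Lemma \ref{l2.10}). Hence the two diagonal characters of $V_p(E)\mid_G\sim\left(\begin{smallmatrix}\epsilon_p\delta & *\\ 0 & \delta\end{smallmatrix}\right)$ obtained from (\ref{e2.3}) are non-trivial, so $\hzero(G,V_p(E))=0$. Exactly as in the proof of Lemma \ref{l2.7}, passing to $J=F_v(E[p^\infty])$ and $G'=\gal(J/F_{cyc,w})$ and using that $\hone(G',T_p(E))$ is finitely generated over $\Z_p$ while $\hzero(G',E[p^\infty])$ is $\Z_p$-torsion shows that $\hzero(G,E[p^\infty])$ is finite, and then the argument of \cite[Lemma 3.5]{Sh} forces $\hone(G,E[p^\infty])=0$, i.e.\ $\sigma_E^w=0$.

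\emph{Case $\epsilon_p=\delta$.} Write $F''=F_v(\sqrt d)$ with $d$ a unit, and let $E':=E^{(d)}$ be the quadratic twist of $E$ over $F_v$; then $E'[p^\infty]\cong E[p^\infty]\otimes\delta$ as $\gal(\overline{F}_v/F_v)$-modules, and $E'$ again has split multiplicative reduction at $v$, its split/non-split character being $\delta\cdot\delta=1$. Let $M=F''\cdot F_{cyc,w}$, which—both $F''/F_v$ and $F_{cyc,w}/F_v$ being unramified—is the unramified $\Z_p$-extension of $F''$, and set $G''=\gal(\overline{F}_v/M)\triangleleft G$, so that $G/G''\cong\Z/2\Z$ and $\delta\mid_{G''}=1$. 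Since $[G:G'']=2$ is prime to $p$ and the modules are $p$-torsion, $\hone(G,-)$ is the space of $G/G''$-invariants in $\hone(G'',-)$; because the $\delta$-twist negates the (involutive) action of the generator of $G/G''$ on $\hone(G'',E[p^\infty])=\hone(G'',E'[p^\infty])$, one gets the $\pm1$-eigenspace decomposition
$$\hone(G'',E[p^\infty])\ \cong\ \hone(G,E[p^\infty])\ \oplus\ \hone(G,E'[p^\infty]),$$
and hence $\corank_{\Z_p}\hone(G'',E[p^\infty])=\sigma_E^w+\sigma_{E'}^w$. Now $\epsilon_p\mid_{G_v}=\delta$ has order $2$, so $F_v(\mu_p)=F''$ and thus $\mu_p\subset F''\subset M$; applying Lemma \ref{l2.7} over $F''$ (its proof is purely local) gives $\corank_{\Z_p}\hone(G'',E[p^\infty])=1$. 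On the other hand $\epsilon_p\mid_{G_v}=\delta\neq1$, so $\mu_p\not\subset F_v$ and Lemma \ref{l2.7} gives $\sigma_{E'}^w=0$. Therefore $\sigma_E^w=1$, as claimed.

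The delicate point is the eigenspace decomposition in the second case: one must check carefully that, under the $\delta$-twist, base change along the degree-$2$ prime-to-$p$ extension $M/F_{cyc,w}$ interchanges the $+1$- and $-1$-parts of $\hone(G'',E[p^\infty])$ for the $G/G''$-action, so that the coranks add up as stated. The remaining ingredients—that an unramified quadratic twist converts non-split multiplicative reduction into split multiplicative reduction, that $F_v(\mu_p)/F_v$ is unramified of degree equal to the order of $\epsilon_p\mid_{G_v}$, and that characters of order prime to $p$ have the same order on $G$ as on $G_v$—are standard and already implicit in the preceding lemmas.
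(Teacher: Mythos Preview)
Your proof is correct. In the case $\epsilon_p\neq\delta$ your argument and the paper's are essentially the same: both show that the relevant $\hzero$ vanishes (you work with $V_p(E)$, the paper with $E[p]$, noting that $\epsilon_p\delta\neq 1$ and $\delta\neq 1$ on $G$ force $\hzero(G,E[p])=0$, hence $\hzero(G,E[p^\infty])=0$) and then invoke \cite[Lemma~3.5]{Sh}.

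In the case $\epsilon_p=\delta$ your route is genuinely different. The paper argues directly: since $\delta^2=1$, the representation (\ref{e2.3}) becomes $\rho_{E,p}\mid_G\sim\left(\begin{smallmatrix}1&*\\0&\delta\end{smallmatrix}\right)$, from which one reads off $\rk_{\Q_p}\hzero(G,V_p(E))=1$, hence $\corank_{\Z_p}\hzero(G,E[p^\infty])=1$; then \cite[Proposition~2]{Gr} (the local Euler characteristic over $F_{cyc,w}$) gives $\sigma_E^w=\corank_{\Z_p}\hone(G,E[p^\infty])=\corank_{\Z_p}\hzero(G,E[p^\infty])=1$. This is shorter and avoids the auxiliary twist $E'$, the extension $M$, and the eigenspace bookkeeping. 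Your approach has the conceptual merit of reducing everything to the split case already handled in Lemma~\ref{l2.7}; the paper's direct computation instead isolates the principle $\sigma_E^w=\corank_{\Z_p}\hzero(G,E[p^\infty])$, which it reuses verbatim in Lemmas~\ref{l2.11} and~\ref{l2.12}.
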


\begin{proof}

From {\rd the representation} (\ref{e2.4}), we have
\begin{equation}\label{e2.6}
   \overline{\rho}_{E,p}\mid_{G}   \enspace \sim
    \left( {\begin{array}{cc}
      \epsilon_p & \psi \\
     0 & 1 \\
    \end{array} } \right) \otimes \delta.
\end{equation}
Let $ \epsilon_p \neq \delta $.
{\rd It now follows that} $ \hzero(G, E[p])=0 $ i.e. $ \hzero(G, E[p^{\infty}])=0 $.
Now by an argument similar to the proof of \cite[Lemma 3.5]{Sh}, we have $ \hone(G,E[p^\infty])=0 $ i.e. $ \sigma_{E}^{w}=0. $
Next assume $ \epsilon_p = \delta $. 
Since $ \delta $ is a quadratic character, from {\rd the representation} (\ref{e2.3}) we have 
\begin{equation}\label{e2.7}
\rho_{E,p}\mid_{G}  \enspace \sim
    \left( {\begin{array}{cc}
     1 & * \\
     0 & \delta \\
    \end{array} } \right).
\end{equation}
As $ \delta $ is non-trivial, {\rd the above representation gives} $ 1=\text{rank}_{\Q_p} \, \hzero(G, V_p(E))=\text{corank}_{\Z_p} \, \hzero(G, E[p^{\infty}]) $.
Finally, {\rd from \cite[Proposition 2]{Gr} we have} $  \text{corank}_{\Z_p} \, \hzero(G, E[p^{\infty}]) = \text{corank}_{\Z_p} \, \hone(G, E[p^{\infty}]) $ {\rd which implies} $ \sigma_{E}^{w}=1. $

\end{proof}

\begin{lemma}[\rd \bf Split \textendash \ Non-split]\label{l2.9}
{\rd Let $ v $ be a finite prime of $ F $ with $ v \nmid p $ and $ w $ be a prime of $ F_{cyc} $ with $ w \mid v $.}
Let $ E_1 $ and $ E_2 $  be two elliptic curves such that $ E_1 $ has split multiplicative reduction at $ v, E_2 $ has non-split multiplicative reduction at $ v $ and $ E_1[p] \cong E_2[p], $ then $ \sigma_{E_1}^{w} \equiv \sigma_{E_2}^{w} + 1 \,(\text{mod} \enspace 2)$. 
\end{lemma}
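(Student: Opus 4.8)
The plan is to use the congruence $E_1[p]\cong E_2[p]$ at the level of mod $p$ representations, restricted to the local Galois group at $v$, in order to pin down the unramified quadratic character $\delta$ attached to the non-split curve $E_2$; once this is done, the asserted parity relation reduces to a short bookkeeping with Lemma \ref{l2.7} and Lemma \ref{l2.8}. First I would extract, from (\ref{e2.4}) and the definition of $\delta$, the semisimplifications of the two residual representations restricted to $G=\gal(\overline{F}_v/F_{cyc,w})$. Since $E_1$ has split multiplicative reduction, its quadratic character is trivial, so $\overline{\rho}_{E_1,p}|_{G}$ is upper-triangular with diagonal characters $\epsilon_p|_{G}$ and $1$, and therefore has semisimplification $\epsilon_p|_{G}\oplus 1$; since $E_2$ has non-split multiplicative reduction with non-trivial unramified quadratic character $\delta$, the representation $\overline{\rho}_{E_2,p}|_{G}$ is upper-triangular with diagonal characters $(\epsilon_p\delta)|_{G}$ and $\delta|_{G}$, and has semisimplification $(\epsilon_p\delta)|_{G}\oplus\delta|_{G}$.

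Next, because $E_1[p]\cong E_2[p]$ as $\gal(\overline{\Q}/\Q)$-modules, their restrictions to $G$ are isomorphic, so by Brauer--Nesbitt their semisimplifications are isomorphic as well; comparing the two descriptions above yields the equality of multisets of characters of $G$
$$ \{\, \epsilon_p|_{G},\ 1 \,\} \;=\; \{\, (\epsilon_p\delta)|_{G},\ \delta|_{G} \,\}. $$
Here $\delta|_{G}$ is non-trivial: $\delta$ is unramified at $v$ and of order $2$, while $F_{cyc,w}/F_v$ is an unramified pro-$p$ extension, so $\delta|_{G}=1$ if and only if $\delta|_{G_v}=1$, which is exactly the content of the Remark preceding Lemma \ref{l2.10}. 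Since $\delta|_{G}\neq 1$, the displayed equality forces $\delta|_{G}=\epsilon_p|_{G}$, and then $(\epsilon_p\delta)|_{G}=(\delta|_{G})^2=1$ automatically, so the two multisets do agree. In particular $\epsilon_p|_{G}$ equals the non-trivial quadratic character $\delta|_{G}$; thus $\epsilon_p\neq 1$ and $\epsilon_p=\delta$, both as characters of $G$.

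Finally I would invoke the earlier lemmas. Lemma \ref{l2.7} applied to $E_1$, in the case $\epsilon_p\neq 1$, gives $\sigma_{E_1}^{w}=0$; Lemma \ref{l2.8} applied to $E_2$, in the case $\epsilon_p=\delta$, gives $\sigma_{E_2}^{w}=1$. Hence $\sigma_{E_1}^{w}=0\equiv 2=\sigma_{E_2}^{w}+1\pmod 2$, which is the claimed relation. I do not anticipate a genuine obstacle: the one mildly delicate step is the multiset/semisimplification argument that isolates $\epsilon_p=\delta$ (equivalently, that forces $[F_v(\mu_p):F_v]=2$), together with the routine observation --- provided by the Remark --- that triviality of $\epsilon_p$, of $\delta$, and of $\epsilon_p\delta$ may be tested indifferently over $G$ or over $G_v$, since each of these characters is unramified at $v$ and of order prime to $p$.
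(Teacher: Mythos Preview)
Your proof is correct and reaches the same endpoint as the paper's: one shows that the congruence forces $\epsilon_p=\delta$ on $G$ (equivalently on $G_v$), and then Lemma~\ref{l2.7} gives $\sigma_{E_1}^w=0$ while Lemma~\ref{l2.8} gives $\sigma_{E_2}^w=1$.

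The route to $\epsilon_p=\delta$ is where you differ. The paper argues by dichotomy: if $\epsilon_p=\delta$ one is done, and if $\epsilon_p\neq\delta$ it equates $(\mathrm{Trace}\,\overline{\rho}_{E_1,p})(\mathrm{Frob}_v)=(\mathrm{Trace}\,\overline{\rho}_{E_2,p})(\mathrm{Frob}_v)$ to obtain $\epsilon_p(\mathrm{Frob}_v)=-1$, then base-changes to $F_v(\mu_p)$ and derives a contradiction by exhibiting a nonzero $p$-torsion point for $E_1$ but none for $E_2$ over $F'_{cyc,w}$. Your argument bypasses the case split and the auxiliary base change entirely: comparing semisimplifications of the two upper-triangular residual representations gives the multiset identity $\{\epsilon_p|_G,1\}=\{(\epsilon_p\delta)|_G,\delta|_G\}$, and since $\delta|_G\neq 1$ this immediately forces $\epsilon_p|_G=\delta|_G$. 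This is cleaner and more direct; the paper's trace computation is in effect a special case of your semisimplification comparison (evaluated at a single element), and its subsequent passage to $F_v(\mu_p)$ is not really needed once one observes that an unramified quadratic character of $G_v$ is unique. Conversely, the paper's argument is slightly more hands-on and avoids invoking Brauer--Nesbitt explicitly, though as you note the semisimplification step is immediate here since the two representations are actually isomorphic.
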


\begin{proof}
Let $ \epsilon_p = \delta $.
Since $ \delta $ is non-trivial, we have $ \epsilon_p  \neq 1$.
Then the {\cl claim} follows from Lemmas \ref{l2.7} and \ref{l2.8}. 
Next assume $ \epsilon_p \neq \delta $.
From {\rd the representation} (\ref{e2.4}), we have 
$$ \overline{\rho}_{E_1,p}\mid_{G}  \enspace \sim
   \left( {\begin{array}{cc}
    \epsilon_p & \psi \\
    0 & 1 \\
   \end{array} } \right) $$
and $$ \overline{\rho}_{E_2,p}\mid_{G}   \enspace \sim
    \left( {\begin{array}{cc}
     \epsilon_p & \psi \\
     0 & 1 \\
    \end{array} } \right) \otimes \delta $$
Since $ E_1[p] \cong E_2[p] $, we have $ (\text{Trace} \, \overline{\rho}_{E_1,p})({\text{Frob}_v}) = (\text{Trace} \, \overline{\rho}_{E_2,p})({\text{Frob}_v}) $.
{\rd As} $ \delta $ is non-trivial, {\rd it follows that} $ \epsilon_p({\text{Frob}_v})=-1 $.
Therefore $ \epsilon_p $ is also a quadratic character and $ {\rd [F_v(\mu_p):F_v]}=2 $. 
Consider the field $ F^\prime:={\rd F_v(\mu_p)} $ and $ G^{\prime\prime}:=\text{Gal}({\rd \overline{F}_{v}}/F^\prime_{cyc,w}). $
Then from {\rd the representation} (\ref{e2.4}), we have 
\begin{equation}\label{e2.8}
\overline{\rho}_{E_1,p}\mid_{G^{\prime\prime}} \enspace   \sim
   \left( {\begin{array}{cc}
    1 & \psi \\
    0 & 1 \\
   \end{array} } \right) 
\end{equation}  and
\begin{equation}\label{e2.9}  
\overline{\rho}_{E_2,p}\mid_{G^{\prime\prime}} \enspace   \sim
   \left( {\begin{array}{cc}
    1 & \psi \\
    0 & 1 \\
   \end{array} } \right) \otimes \delta 
\end{equation}
Since $ \epsilon_p \neq \delta $ and $ [F^\prime:{\rd F_v}]=2 $, we have $ \delta \mid_{F^\prime} \neq 1 $.
Therefore {\rd the representations} (\ref{e2.8}) and (\ref{e2.9}) imply $ E_1(F^\prime_{cyc,w})[p] \neq 0 $ but $ E_2(F^\prime_{cyc,w})[p] = 0 $ which contradicts the fact that $ E_1[p] \cong E_2[p]. $
Hence the case $ \epsilon_p \neq \delta $ is not possible.

\end{proof}

\begin{remark}
Let $ p\geq5 $ and {\rd $ l $ be a finite prime of $ \Q $ with $ l \nmid p $.}
Since $ \epsilon_p \neq \delta $, it is not possible to have elliptic curves $ E_1 $ and $ E_2 $ such that $ E_1 $ has split multiplicative reduction at $ l, E_2 $ has non-split multiplicative reduction at $ l $ and $ E_1[p] \cong E_2[p]. $
\end{remark}

\begin{lemma}[\rd \bf Good \textendash \ Split]\label{l2.11}
{\rd Let $ v $ be a finite prime of $ F $ with $ v \nmid p $ and $ w $ be a prime of $ F_{cyc} $ with $ w \mid v $.}
Let $ E_1 $ and $ E_2 $ be two elliptic curves defined over $ F $ such that $ E_1 $ has good reduction at $ v $, $ E_2 $ has split multiplicative reduction at $ v $ and $ E_1[p] \cong E_2[p], $ then $ \sigma_{E_1}^{w} \equiv \sigma_{E_2}^{w} + 1 \,(\text{mod} \enspace 2)$.
\end{lemma}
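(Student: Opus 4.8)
The plan is to compute $\sigma_{E_1}^{w}$ and $\sigma_{E_2}^{w}$ explicitly and to check that the first exceeds the second by exactly $1$. Since $E_2$ has split multiplicative reduction at $v$, Lemma \ref{l2.7} already gives $\sigma_{E_2}^{w}=1$ if $\mu_p\subset F_v$ and $\sigma_{E_2}^{w}=0$ otherwise, so it is enough to show that $\sigma_{E_1}^{w}$ equals $2$ when $\mu_p\subset F_v$ and equals $1$ when $\mu_p\not\subset F_v$.

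For $E_1$, which has good reduction at $v$, I would argue as in the proof of Lemma \ref{l2.1}: $\hzero(G,E_1[p^\infty])$ is divisible (by an argument similar to \cite[Lemma 4.1.2]{EPW}), and hence, using also \cite[Proposition 2]{Gr},
\begin{equation*}
\sigma_{E_1}^{w}=\corank_{\Z_p}\hone(G,E_1[p^\infty])=\corank_{\Z_p}\hzero(G,E_1[p^\infty])=\dim_{\F_p}\bigl(E_1[p^\infty]^{G}\bigr)[p]=\dim_{\F_p}E_1[p]^{G},
\end{equation*}
where the last two equalities use the divisibility of $E_1[p^\infty]^{G}$ and the fact that passing to the $p$-torsion commutes with taking $G$-invariants. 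Since $E_1[p]\cong E_2[p]$ as Galois modules, $\dim_{\F_p}E_1[p]^{G}=\dim_{\F_p}E_2[p]^{G}$, so the problem reduces to computing $\dim_{\F_p}E_2[p]^{G}$.

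The key point is that $E_2[p]$ is unramified at $v$, being isomorphic to $E_1[p]$ while $E_1$ has good reduction there. By (\ref{e2.4}), with $\delta$ trivial since $E_2$ has split multiplicative reduction, $\overline{\rho}_{E_2,p}\mid_{G}\sim\left(\begin{smallmatrix}\epsilon_p&\psi\\0&1\end{smallmatrix}\right)$; the class of $\psi$ in $\hone(G,\Z/p\Z(\epsilon_p))$ is the restriction to $G$ of the analogous class over $G_v$, namely the one attached to the Tate period $q_{E_2}$. Unramifiedness of $E_2[p]$ forces that class over $G_v$ to be unramified, and since $F_{cyc,w}/F_v$ is the unramified $\Z_p$-extension of $F_v$, an unramified class over $G_v$ with $p$-power coefficients restricts to $0$ over $G$: it is inflated from $\gal(F_v^{nr}/F_v)$, so its restriction to $G$ is inflated from $\gal(F_v^{nr}/F_{cyc,w})$, which is a pro-(prime-to-$p$) group and therefore has no nonzero continuous cohomology in positive degrees with $p$-power torsion coefficients. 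Hence $\overline{\rho}_{E_2,p}\mid_{G}\sim\left(\begin{smallmatrix}\epsilon_p&0\\0&1\end{smallmatrix}\right)$, so $E_2[p]^{G}$ is the sum of the line on which $G$ acts trivially and the line on which $G$ acts through $\epsilon_p$; therefore $\dim_{\F_p}E_2[p]^{G}=2$ if $\epsilon_p\mid_{G}$ is trivial and $\dim_{\F_p}E_2[p]^{G}=1$ otherwise. By the remark following (\ref{e2.4}), $\epsilon_p\mid_{G}$ is trivial exactly when $\epsilon_p\mid_{G_v}$ is, i.e.\ exactly when $\mu_p\subset F_v$. Comparing with the value of $\sigma_{E_2}^{w}$ from Lemma \ref{l2.7}, we get $\sigma_{E_1}^{w}=\dim_{\F_p}E_2[p]^{G}=1+\sigma_{E_2}^{w}$ in both cases, which proves the congruence.

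The step that carries the real content is the structural assertion that $E_2[p]$ becomes a split $G$-module, i.e.\ the vanishing after restriction to $F_{cyc,w}$ of the (unramified) local class defining the Tate-curve extension of $E_2$; this is the only place where the hypothesis that $E_1$ has good reduction at $v$ is genuinely used, and the rest is formal bookkeeping with (\ref{e2.2}) and with the coincidence of $\hzero$- and $\hone$-coranks supplied by \cite[Proposition 2]{Gr}.
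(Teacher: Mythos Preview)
Your proof is correct, and it reaches the same intermediate conclusion as the paper---namely that $\overline{\rho}_{E_1,p}\mid_G\cong\overline{\rho}_{E_2,p}\mid_G$ is the diagonal representation $\epsilon_p\oplus 1$---but the route is genuinely different. The paper splits into cases: for $\mu_p\subset F_v$ it defers to \cite[Lemmas 3.3, 3.6]{Sh}, and for $\mu_p\not\subset F_v$ it passes to $F'=F_v(\mu_p)$, invokes \cite[Proposition 5.1]{HM} to force $E_1[p^\infty]^{G''}=E_1[p^\infty]$ from the existence of a single nontrivial $p$-torsion point, and then applies a Maschke-type semisimplicity argument (the index $[G:G'']$ being prime to $p$) to split $E_1[p]$ over $G$. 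Your argument instead exploits directly that $E_2[p]\cong E_1[p]$ is unramified, so the Tate extension class is unramified over $G_v$ and hence dies upon restriction to $G$ because $\gal(F_v^{nr}/F_{cyc,w})\cong\prod_{\ell\neq p}\Z_\ell$ has trivial $p$-torsion cohomology. This is more self-contained (no appeal to \cite{HM} or \cite{Sh}), treats both cases uniformly, and in fact yields the exact equality $\sigma_{E_1}^w=\sigma_{E_2}^w+1$ rather than merely a parity statement. The paper's approach, on the other hand, illustrates how the structural input from \cite{HM} about torsion in good-reduction towers can substitute for the explicit cocycle computation.
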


\begin{proof}

Let $ \epsilon_p = 1 $ i.e. $ \mu_p \subseteq F_v $.
Then the {\cl claim} follows from \cite[Lemma 3.3 and Lemma 3.6]{Sh}.
Next assume $ \epsilon_p \neq 1 $.
From Lemma \ref{l2.7}, we have $ \sigma_{E_2}^{w}=0. $
Consider the field $ F^\prime:={\rd F_v(\mu_p)} $ and $ G^{\prime\prime}:=\text{Gal}({\rd \overline{F}_{v}}/F^\prime_{cyc,w}) $.
Then from {\rd the representation} (\ref{e2.4}), we have 
$$ \overline{\rho}_{E_2,p}\mid_{G^{\prime\prime}} \enspace   \sim
   \left( {\begin{array}{cc}
    1 & \psi \\
    0 & 1 \\
   \end{array} } \right) $$ 
which implies $ E_2(F^\prime_{cyc,w})[p] \neq 0 $ {\rd and hence $ E_1(F^\prime_{cyc,w})[p] \neq 0. $ }
{\rd It follows} from \cite[Proposition 5.1]{HM} {\rd that} $ E_1(F^\prime_{cyc,w})[p] \cong \Q_p/\Z_p \oplus \Q_p/\Z_p. $
Therefore $$\overline{\rho}_{E_1,p}\mid_{G^{\prime\prime}} \enspace   \sim
   \left( {\begin{array}{cc}
    1 & 0 \\
    0 & 1 \\
   \end{array} } \right) $$
Since order of $ \text{Gal}(F^\prime_{cyc,w}/F_{cyc,w}) $ is prime to $ p, $ $ E_1[p] $ is a semisimple $ G $-module i.e. 
\begin{equation}\label{e2.10}
\overline{\rho}_{E_1,p}\mid_{G}  \enspace \sim
   \left( {\begin{array}{cc}
    \epsilon_p & 0 \\
    0 & 1 \\
   \end{array} } \right)
\end{equation}
Since $ \epsilon_p \neq 1 $, {\rd it now follows that} $ \hzero(G, E_1[p^\infty])[p]=\hzero(G, E_1[p]) \cong \Z/p\Z. $
{\rd Further as} $  \hzero(G, E_1[p^{\infty}]) $ is {\rd $ p $-divisible} \cite[Lemma 4.1.2]{EPW}, we have $ \hzero(G, E_1[p^{\infty}]) \cong \Q_p/\Z_p. $
Finally, {\rd from \cite[Proposition 2]{Gr} we have} $  \text{corank}_{\Z_p} \, \hzero(G, E_1[p^{\infty}]) = \text{corank}_{\Z_p} \, \hone(G, E_1[p^{\infty}]) $ {\rd which implies} $ \sigma_{E_1}^{w}=1. $

\end{proof}

\begin{lemma}[\rd \bf Good \textendash \  Non-split]\label{l2.12}
{\rd Let $ v $ be a finite prime of $ F $ with $ v \nmid p $ and $ w $ be a prime of $ F_{cyc} $ with $ w \mid v $.}
Let $ E_1 $ and $ E_2 $  be two elliptic curves such that $ E_1 $ has good reduction at $ v, E_2 $ has non-split multiplicative reduction at $ v $ and $ E_1[p] \cong E_2[p], $ then $ \sigma_{E_1}^{w} =  \sigma_{E_2}^{w} $. 
\end{lemma}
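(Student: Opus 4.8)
The plan is to reduce the statement to a comparison of the $\Z_p$-coranks of $\hzero(G,E_i[p^\infty])$, where the congruence can be exploited directly. By \cite[Proposition 2]{Gr} (already invoked in the proof of Lemma \ref{l2.8}), for any elliptic curve $E/F$ and any finite prime $v\nmid p$ of $F$,
$$\sigma_E^{w}=\corank_{\Z_p}\hone(G,E[p^\infty])=\corank_{\Z_p}\hzero(G,E[p^\infty]),$$
so it suffices to prove $\corank_{\Z_p}\hzero(G,E_1[p^\infty])=\corank_{\Z_p}\hzero(G,E_2[p^\infty])$. Since $E_1$ has good reduction at $v$, the group $\hzero(G,E_1[p^\infty])$ is divisible — this is exactly the fact established in the proof of Lemma \ref{l2.1} via the argument analogous to \cite[Lemma 4.1.2]{EPW} — and hence $\corank_{\Z_p}\hzero(G,E_1[p^\infty])=\dim_{\F_p}\hzero(G,E_1[p^\infty])[p]=\dim_{\F_p}E_1[p]^{G}$. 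Once the same formula is established for $E_2$, the congruence $E_1[p]\cong E_2[p]$ (restricted along a fixed embedding $\ol{\Q}\inj\ol{F}_v$) gives $\dim_{\F_p}E_1[p]^{G}=\dim_{\F_p}E_2[p]^{G}$, and the two coranks agree.

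So the real content is: if $E_2$ has non-split multiplicative reduction at $v$, then $\hzero(G,E_2[p^\infty])$ is divisible. For this I would start from the representation (\ref{e2.3}), $\rho_{E_2,p}\mid_{G}\sim\left(\begin{smallmatrix}\epsilon_p&*\\0&1\end{smallmatrix}\right)\otimes\delta$ with $\delta$ the unramified quadratic character. Intersecting the $G$-stable line of $V_p(E_2)$ on which $G$ acts through $\epsilon_p\delta$ with the Tate lattice $T_p(E_2)$, and passing to $E_2[p^\infty]=V_p(E_2)/T_p(E_2)$, one obtains an exact sequence of $G$-modules
$$0\lrta\Q_p/\Z_p(\epsilon_p\delta)\lrta E_2[p^\infty]\lrta\Q_p/\Z_p(\delta)\lrta0 .$$
Taking $G$-invariants, the crucial point is $(\Q_p/\Z_p(\delta))^{G}=0$: since $F_{cyc,w}$ is the unramified $\Z_p$-extension of $F_v$, a pro-$p$ extension, $\delta$ remains non-trivial on $G=\gal(\ol{F}_v/F_{cyc,w})$, and as $p$ is odd the element $2$ acts invertibly on $\Q_p/\Z_p$, forcing the $\delta$-isotypic invariants to vanish. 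Hence $\hzero(G,E_2[p^\infty])=(\Q_p/\Z_p(\epsilon_p\delta))^{G}$; because $\epsilon_p\delta\mid_G$ has finite order dividing $p-1$ (prime to $p$), this group is $\Q_p/\Z_p$ when $\epsilon_p\mid_G=\delta\mid_G$ and $0$ otherwise, and in either case it is divisible, with no case distinction needed. (In passing this recovers the dichotomy of Lemma \ref{l2.8}, since $\epsilon_p=\delta$ on $G$ iff $\epsilon_p=\delta$ on $G_v$.) Consequently $\corank_{\Z_p}\hzero(G,E_2[p^\infty])=\dim_{\F_p}E_2[p]^{G}$, and the chain of equalities above yields $\sigma_{E_1}^{w}=\sigma_{E_2}^{w}$.

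I expect the only slightly delicate step to be the vanishing $(\Q_p/\Z_p(\delta))^{G}=0$, i.e.\ the assertion that an unramified quadratic twist is not absorbed by the unramified $\Z_p$-extension; this is where the oddness of $p$ genuinely enters, and one should be careful that $F_{cyc,w}/F_v$ is the full unramified $\Z_p$-extension, so that the image of $G$ in $\gal(F_v^{nr}/F_v)$ is the prime-to-$p$ part of $\widehat{\Z}$, on which $\delta$ is still onto $\{\pm1\}$. As an independent check one can argue more concretely: $E_2[p]$ is unramified at $v$ (being $\cong E_1[p]$), $\text{Frob}_v$ acts on it with determinant $N(v)$ and, since $E_2$ has non-split reduction, trace $-(N(v)+1)$, hence with eigenvalues $-1$ and $-N(v)$ modulo $p$; a short inflation–restriction computation then gives $\dim_{\F_p}\hone(G,E_i[p])=\dim_{\F_p}\ker\!\left(\text{Frob}_v-N(v)\mid E_i[p]\right)$, which is the same for $i=1,2$ by the congruence and again reproves Lemma \ref{l2.8}.
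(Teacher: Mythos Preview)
Your proof is correct and takes a somewhat different route from the paper's. The paper splits into the two cases $\epsilon_p\neq\delta$ and $\epsilon_p=\delta$: in the first it observes $\hzero(G,E_2[p^\infty])=0$ and then, via the exact sequence (\ref{e2.2}) and the divisibility of $\hone(G,E_i[p^\infty])$, obtains $\hone(G,E_i[p])\cong\hone(G,E_i[p^\infty])[p]$ for both $i$, concluding from $E_1[p]\cong E_2[p]$; in the second it computes both $\sigma$-invariants to be $1$ directly (using Lemma \ref{l2.8} for $E_2$ and the divisibility of $\hzero(G,E_1[p^\infty])$ for $E_1$). You instead prove divisibility of $\hzero(G,E_2[p^\infty])$ uniformly, from the Tate filtration $0\to\Q_p/\Z_p(\epsilon_p\delta)\to E_2[p^\infty]\to\Q_p/\Z_p(\delta)\to0$ together with the key vanishing $(\Q_p/\Z_p(\delta))^G=0$, and then compare through $\hzero$ rather than $\hone$: $\sigma_{E_i}^w=\corank_{\Z_p}\hzero(G,E_i[p^\infty])=\dim_{\F_p}E_i[p]^G$. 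This eliminates the case distinction and is a bit more conceptual, while the paper's argument stays closer to the template used in the other lemmas of Section \ref{s2}, which systematically pass through (\ref{e2.2}). Your observation that $\delta$ survives the pro-$p$ unramified extension is precisely the input that lets you treat both cases at once; the paper's second case amounts to the same fact, just used after splitting.
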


\begin{proof}

Let $ \epsilon_p \neq \delta $ { \rd in the representation (\ref{e2.4}) }.
Then $ \hzero(G, E_2[p^{\infty}])=0 $.
{\rd It follows from the exact sequence (\ref{e2.2}) that} $ \hone(G, E_2[p]) \cong \hone(G, E_2[p^{\infty}])[p] $.
Since $ E_1 $ has good reduction at $ v $, by similar arguments as in Lemma \ref{l2.1} we have $ \hone(G, E_1[p]) \cong \hone(G, E_1[p^{\infty}])[p]. $ 
Therefore $ E_1[p] \cong E_2[p] $ implies $ \sigma_{E_1}^{w} =  \sigma_{E_2}^{w}. $
Next assume $ \epsilon_p = \delta $.
Then $$ \overline{\rho}_{E_2,p}\mid_{G}  \enspace \sim
    \left( {\begin{array}{cc}
     1 & \psi \\
     0 & \delta \\
    \end{array} } \right) $$
which implies $ \hzero(G, E_2[p]) \neq 0. $
Since $ E_1[p] \cong E_2[p], $ we have $ \hzero(G, E_1[p]) \neq 0. $
Also $ \delta $ is non-trivial implies $ \hzero(G, E_1[p^\infty])[p]=\hzero(G, E_1[p]) \cong \Z/p\Z. $
Since $  \hzero(G, E_1[p^{\infty}]) $ is divisible \cite[Lemma 4.1.2]{EPW}, we have  $  \hzero(G, E_1[p^{\infty}]) \cong \Q_p/\Z_p. $
Finally, {\rd by \cite[Proposition 2]{Gr} we have} $  \text{corank}_{\Z_p} \, \hzero(G, E_1[p^{\infty}]) = \text{corank}_{\Z_p} \, \hone(G, E_1[p^{\infty}]) $ {\rd from which it follows that} $ \sigma_{E_1}^{w}=1. $
Also from Lemma \ref{l2.8}, we have $ \sigma_{E_2}^{w}=1. $

\end{proof}

\section{$ p $-Selmer rank for ordinary elliptic curves} \label{s3}

Let  $ p $ be an odd prime, $ F $ be a number field and $ F_{cyc}/F $ denote the cyclotomic $ \Z_p $-extension of $ F $ with $ n $-th layer $ F_n $.
{\cl We put $ \Gamma_n :=\text{Gal}(F_n/F) $ and $ \Gamma := \text{Gal}(F_{cyc}/F). $ 
Then $ \Gamma_n \cong \Z/p^n\Z $ and $ \Gamma \cong \Z_p. $
The Iwasawa algebra $ \Lambda  :=\mathbb{Z}_{p}[[\Gamma]] \cong \mathbb{Z}_{p}[[T]] $, where the isomorphism is given by identifying $ 1 + T $ with a topological generator of $ \Gamma. $}

Let $ E_1 $ and $ E_2 $ be two elliptic curves defined over $ \Q $ with good, ordinary reduction at $ p. $ 
Let $ N_i $ denote the conductor of $ E_i $ over $ F $ and $ \overline{N_i} $ denote the prime to $ p $-part of the conductor of the Galois module $ E_i[p] $ over $ F $ for $ i = 1, 2. $ 
Let $ \Sigma $ be the finite set of primes of $ F $ containing the primes of bad reduction of $ E_1 $ and $ E_2 $, the infinite primes and the primes above $ p. $ 
Put,
\begin{equation}\label{e3.1}
 \Sigma_0 := \{v \in \Sigma \mid v \mid N_1/\overline{N_1} \enspace \text{or} \enspace v \mid N_2/\overline{N_2}\}.
\end{equation}
Let $ S_{E_i } $ denote the set of primes $ v \in \Sigma_0 $ such that $ E_i $ has split multiplicative reduction at $ v $ and $ s_p(E_i/F) $ denote the 
{\cl $ \Z_p$-}rank of the Pontryagin dual of $ \text{Sel}_p(E_i/F) $, for $ i = 1, 2. $
Then in \cite{Sh}, the author has studied the variation of the parity of the $ p $-Selmer ranks of $ E_1 $ and $ E_2 $ over $ F. $
He also proved the following.
\begin{theorem}\cite[Theorem 1.1]{Sh}
Let $ E_1 $ and $ E_2 $ be two elliptic curves defined over $ \Q $ with good and ordinary reduction at an odd prime $ p. $ 
We further assume that the following hold

(a) $ E_1[p] $ is an irreducible $ \text{Gal}(\overline{F} /F) $-module.

(b) $  \mu_p \subseteq F $ where $ \mu_p $ denotes the group of $ p $-th roots of unity.

(c) $ \sel_{p}(E_1/F_{cyc})[p] $ is finite where $ F_{cyc} $ denote the cyclotomic $ \Z_p $-extension of $ F. $ 

If $ E_1 $ and $ E_2 $ are congruent at the prime $ p $ then
\begin{equation}\label{e3.2}
s_p(E_1/F) + \mid S_{E_1} \mid \equiv s_p(E_2/F) + \mid S_{E_2} \mid (\text{ mod} \enspace 2).
\end{equation}

\end{theorem}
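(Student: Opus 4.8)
The plan is to pass to the cyclotomic $\Z_p$-extension $F_{cyc}$ and compare the Iwasawa $\lambda$-invariants of the primitive and non-primitive $p^\infty$-Selmer groups of $E_1$ and $E_2$, following the strategy of Greenberg--Vatsal, with the local contributions supplied by the lemmas of Section \ref{s2}. Write $\mathcal{X}(E):=\widehat{\sel_p(E/F_{cyc})}$, and let $\mathcal{X}^{\Sigma_0}(E)$ be the Pontryagin dual of the non-primitive Selmer group $\sel_p^{\Sigma_0}(E/F_{cyc})$ obtained by relaxing the local conditions at the primes of $\Sigma_0$; denote their $\lambda$-invariants by $\lambda(E)$ and $\lambda^{\Sigma_0}(E)$. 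The defining exact sequence
\begin{equation*}
0 \lrta \sel_p(E/F_{cyc}) \lrta \sel_p^{\Sigma_0}(E/F_{cyc}) \lrta \bigoplus_{v\in\Sigma_0}\h_v(E/F_v) \lrta 0
\end{equation*}
yields $\lambda^{\Sigma_0}(E)=\lambda(E)+\sum_{v\in\Sigma_0}\tau_E^v$, since $\tau_E^v=\corank_{\Z_p}\h_v(E/F_v)$.

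First I would show $\lambda^{\Sigma_0}(E_1)=\lambda^{\Sigma_0}(E_2)$. Hypothesis (b), together with the sequence above (whose outer terms have finite $p$-torsion, the local ones because each $\widehat{\h_v(E_1/F_v)}$ is finitely generated over $\Z_p$), forces $\sel_p^{\Sigma_0}(E_1/F_{cyc})[p]$ to be finite, i.e. $\mu^{\Sigma_0}(E_1)=0$. The decisive congruence input is that, since $E_1[p]\cong E_2[p]$ is irreducible by hypothesis (a), the $p$-torsion of the non-primitive $p^\infty$-Selmer group is computed by the mod $p$ non-primitive Selmer group attached to the common residual representation, so $\sel_p^{\Sigma_0}(E_1/F_{cyc})[p]\cong\sel_p^{\Sigma_0}(E_2/F_{cyc})[p]$ and hence $\mu^{\Sigma_0}(E_2)=0$ as well. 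Invoking the absence of nonzero finite $\Lambda$-submodules in $\mathcal{X}^{\Sigma_0}(E_i)$ (a consequence of irreducibility of the residual representation), each $\mathcal{X}^{\Sigma_0}(E_i)$ is $\Z_p$-free, so $\lambda^{\Sigma_0}(E_i)=\dim_{\F_p}\sel_p^{\Sigma_0}(E_i/F_{cyc})[p]$, and the two sides coincide. Consequently $\lambda(E_1)-\lambda(E_2)=\sum_{v\in\Sigma_0}(\tau_{E_2}^v-\tau_{E_1}^v)$.

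Next I would evaluate the local sum modulo $2$ using Section \ref{s2}. For $v\nmid p$ the primes $w\mid v$ of $F_{cyc}$ are permuted transitively by $\Gamma$, and their number $g_v$, being the index in $\Gamma$ of the decomposition subgroup of $w$, is a power of $p$ and hence odd; thus $\tau_{E_i}^v=g_v\,\sigma_{E_i}^w\equiv\sigma_{E_i}^w\ (\mathrm{mod}\ 2)$. Since $\mu_p\subset F$ we have $\epsilon_p=1$ (so $\mu_p\subset F_v$) at every $v$, and in particular $T=\emptyset$. Running through the reduction types with Lemmas \ref{l2.1}, \ref{l2.2}, \ref{l2.3}, \ref{l2.7}, \ref{l2.8}, \ref{l2.9}, \ref{l2.10}, \ref{l2.11} and \ref{l2.12}, one checks that for every $v\in\Sigma_0$
\begin{equation*}
\sigma_{E_1}^w-\sigma_{E_2}^w \equiv \mathbf{1}_{S_{E_1}}(v)-\mathbf{1}_{S_{E_2}}(v)\ (\mathrm{mod}\ 2),
\end{equation*}
where $\mathbf{1}_{S_{E_i}}$ is the indicator function of $S_{E_i}$; the discrepancy of the two local invariants is detected exactly by whether each curve has split multiplicative reduction at $v$, the remaining configurations being either equal on both sides or excluded by Lemmas \ref{l2.3} and \ref{l2.10}. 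Summing over $v\in\Sigma_0$ gives $\sum_{v\in\Sigma_0}(\tau_{E_1}^v-\tau_{E_2}^v)\equiv\mid S_{E_1}\mid-\mid S_{E_2}\mid\ (\mathrm{mod}\ 2)$, whence $\lambda(E_1)+\mid S_{E_1}\mid\equiv\lambda(E_2)+\mid S_{E_2}\mid\ (\mathrm{mod}\ 2)$.

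Finally I would descend from $F_{cyc}$ to $F$. Because $E_i$ has good ordinary reduction at $p$, Mazur's control theorem identifies $\sel_p(E_i/F)$ with $\sel_p(E_i/F_{cyc})^\Gamma$ up to finite kernel and cokernel, so $s_p(E_i/F)=\corank_{\Z_p}\sel_p(E_i/F_{cyc})^\Gamma$ equals the order of vanishing at $T=0$ of the characteristic polynomial of $\mathcal{X}(E_i)$ (hypothesis (b) guaranteeing that $\mathcal{X}(E_i)$ is $\Lambda$-torsion with $\mu=0$). The algebraic functional equation for the dual Selmer module, coming from Iwasawa-theoretic (Cassels--Tate/Flach) duality and the self-duality of $E[p^\infty]$, interchanges its non-trivial zeros in pairs under the involution $1+T\mapsto(1+T)^{-1}$, which for odd $p$ fixes no non-trivial zero; hence $\lambda(E_i)-s_p(E_i/F)$ is even and $s_p(E_i/F)\equiv\lambda(E_i)\ (\mathrm{mod}\ 2)$. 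Substituting into the previous congruence yields the theorem. The main obstacle is the second step: making precise the identification of $\sel_p^{\Sigma_0}(E/F_{cyc})[p]$ with a Selmer group of the residual representation and the vanishing of finite $\Lambda$-submodules, as it is exactly this that lets the mod $p$ congruence $E_1[p]\cong E_2[p]$ control the $\lambda$-invariants; by comparison the local bookkeeping via Section \ref{s2} and the descent are routine once the functional equation is in hand.
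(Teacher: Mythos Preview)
Your proposal is correct and follows essentially the same route as the paper (which quotes this result from \cite{Sh} without proof, but whose own Theorem \ref{t3.2} is proved by exactly this strategy): establish $\lambda^{\Sigma_0}_{E_1}=\lambda^{\Sigma_0}_{E_2}$ via the Greenberg--Vatsal congruence argument, reduce the difference $\lambda_{E_1}-\lambda_{E_2}$ to the local terms $\sigma_{E_i}^w$ using \cite[Lemmas 3.1--3.2]{Sh}, evaluate these with the lemmas of Section \ref{s2}, and finish with $s_p(E_i/F)\equiv\lambda_{E_i}\pmod 2$ (the paper invokes \cite[Proposition 3.10]{Gr1} rather than spelling out the functional-equation argument). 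One small slip: you write ``Hypothesis (b)'' where you mean (c) for the finiteness of $\sel_p(E_1/F_{cyc})[p]$, and your appeal to Lemma \ref{l2.10} to exclude the multiplicative/additive configuration is only valid for $p\ge 5$; for $p=3$ with $\mu_p\subset F_v$ this case is not excluded, but Lemmas \ref{l2.2}, \ref{l2.7}, \ref{l2.8} still give $\sigma_{E_1}^w-\sigma_{E_2}^w\equiv \mathbf{1}_{S_{E_1}}(v)-\mathbf{1}_{S_{E_2}}(v)\pmod 2$, so your conclusion stands.
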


{\tl If $ E_1 $ and $ E_2 $ are congruent at $ p $, then the assumption (a) and (c) hold for $ E_1 $ if and only if the same hold for $ E_2 $.
{\cl Let the Pontryagin dual $ X(E/F^{cyc}) $ of $ Sel(E[p^{\infty}]/F^{cyc}) $ is a finitely generated torsion $ \Lambda $-module. 
Then by the structure theorem of finitely generated $ \Lambda $-modules, one has a pseudo isomorphism
$$X(E/F^{cyc}) \sim (\oplus_{i=1}^{s}\Lambda/(f_{i}(T)^{a_{i}}))\oplus(\oplus_{j=1}^{t}\Lambda/(p^{\mu^{(j)}_{E}}))$$
where $s,t,a_{i},\mu^{(j)}_{E} \in \mathbb{N}$, $f_{i}$ is distinguished and irreducible for all $i$.
Since, the $ a_{i} $'s and the $ \mu^{(j)}_{E} $'s are positive integers, one can define the algebraic Iwasawa invariants $ \lambda_{E} $ and $ \mu_{E} $ by $$\lambda_{E} = \sum_{i=1}^{s}a_i \, \text{deg}(f_{i}(T)), \hspace{.3cm} \mu_{E} = \sum_{j=1}^{t} \mu^{(j)}_{E}.$$ }
{\cl In the case $ F=\Q $ and $ p $ is a prime of} ordinary reduction {\cl for $ E $}, it is a conjecture of Greenberg that there exists a $ \Q $-isogenous elliptic curve $ E^{\prime} $ such that $ \mu_{E^{\prime}} = 0 $ \cite[Conjecture 1.11]{Gr1}. 
In particular, if $ E[p] $ is irreducible as a $ \Z/p\Z $-representation of $\text{Gal}(\overline{\Q}/\Q)$, {\cl it is believed that} $ \mu_{E} = 0 $.
However, over general number fields none of the elliptic curves isogenous to $ E $ may have $ \mu $-invariant equal to zero \cite{Dr} and also there are no examples known for which assumption (a) holds but assumption (c) does not hold.}

Next assume $ \mu_p \nsubseteq F. $ 
Then for $ p=3 $, we have the following counterexamples to the above theorem. 
\vspace{.2cm}

\noindent {\bf Example 1.} Consider the following elliptic curves:
\vspace{.2cm}

$ E_{1}: y^{2}+y=x^{3}-x^{2}-10x-20, \enspace (11a1) $
\vspace{.2cm}
 
$ E_{2}: y^{2}+xy=x^{3}+x^2-2x-7, \enspace (121c1) $
 \vspace{.2cm}
 
The labels in the parentheses above denote the Cremona number of the elliptic curves.
$E_1$ has split multiplicative reduction at 11 while $E_2$ has additive reduction at 11.
Both the curves have $ \lambda $-invariant 0 and $ E_1[3] \cong E_2[3] $.
Hence (\ref{e3.2}) does not hold. 
\vspace{.2cm}

\noindent {\bf Example 2.} Consider the following elliptic curves which are obtained by twisting the curves in the above example by a quadratic character associated to the quadratic extension $ \Q(\sqrt{-5})/\Q $.
\vspace{.2cm}

$ E_{3}: y^{2}=x^{3}-x^{2}-4133x+186637, \enspace (4400m2) $
\vspace{.2cm}
 
$ E_{4}: y^{2}=x^{3}-x^2-1008x+48512, \enspace (48400ch1) $
 \vspace{.2cm}
 
$E_3$ has non-split multiplicative reduction at 11 and additive reduction at 2,5 while $E_4$ has additive reduction at 2,5,11.
$E_3$ and $E_4$ have $ \lambda $-invariant 0 and 1 respectively and $ E_3[3] \cong E_4[3] $.
Hence (\ref{e3.2}) does not hold.
Nevertheless, we have the following theorem.

\begin{theorem} \label{t3.2}
Let $ E_1 $ and $ E_2 $ be two elliptic curves defined over $ \Q $ with good and ordinary reduction at an odd prime $ p. $ 
We further assume the following.

(a) $ E_1[p] $ is an irreducible $ \text{Gal}(\overline{F} /F) $-module.

(b) $ \sel_{p}(E_1/F_{cyc})[p] $ is finite.

If $ E_1[p] \cong E_2[p] $ as $ \text{Gal} \, (\overline{\Q}/\Q) $ modules then
\begin{equation}
s_p(E_1/F) + \mid S_{E_1} \mid \equiv s_p(E_2/F) + \mid S_{E_2} \mid + \mid T \mid (\text{ mod} \enspace 2)
\end{equation}
where $T$ denotes the set of primes $v$ of $F$ in $\Sigma_0$ such that $\mu_p\not\subset F_v$, one of the elliptic curves has multiplicative reduction and other has additive reduction at $v$.

\end{theorem}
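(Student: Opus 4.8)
The plan is to follow the Greenberg--Vatsal template as implemented in \cite{Sh}, reducing the statement to the equality of non-primitive Iwasawa $\lambda$-invariants and to the local computations of Section \ref{s2}. First I would note that the hypotheses are symmetric in $E_1$ and $E_2$. Assumption (a) is symmetric since $E_1[p]\cong E_2[p]$ as $\gal(\ol F/F)$-modules. For (b), introduce the non-primitive Selmer group $\sel^{\Sigma_0}_p(E_i/F_{cyc})$, obtained by dropping the local conditions at the primes of $F_{cyc}$ above $\Sigma_0$: its $p$-torsion $\sel^{\Sigma_0}_p(E_i/F_{cyc})[p]$ is the Selmer group of $E_i[p]$ for the induced local conditions, hence depends only on $E_i[p]$ --- at the primes of $\Sigma\setminus\Sigma_0$ away from $p$ the module $E_i[p]$ is unramified by the definition of $\ol N_i$, and at the primes above $p$ the good ordinary reduction fixes the Greenberg condition in terms of $E_i[p]$ --- and it differs from $\sel_p(E_i/F_{cyc})[p]$ by the finite groups $\h_v(E_i/F_v)[p]$, $v\in\Sigma_0$; so (b) is symmetric too. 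Moreover (b) forces $\sel_p(E_i/F_{cyc})$ to be $\Lambda$-cotorsion with $\mu_{E_i}=0$, and with (a) one gets $\lambda_{E_i}=\dim_{\F_p}\sel_p(E_i/F_{cyc})[p]$.

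Since the residual non-primitive Selmer group is common to $E_1,E_2$ and finite, $\lambda^{\Sigma_0}_{E_1}=\lambda^{\Sigma_0}_{E_2}$. Dualizing
\[0\lra\sel_p(E_i/F_{cyc})\lra\sel^{\Sigma_0}_p(E_i/F_{cyc})\lra\bigoplus_{v\in\Sigma_0}\h_v(E_i/F_v)\lra 0\]
(surjectivity on the right being the standard Greenberg--Vatsal fact, using $\Lambda$-cotorsion of the primitive Selmer group) gives $\lambda^{\Sigma_0}_{E_i}=\lambda_{E_i}+\sum_{v\in\Sigma_0}\tau^v_{E_i}$, hence $\lambda_{E_1}-\lambda_{E_2}=\sum_{v\in\Sigma_0}(\tau^v_{E_2}-\tau^v_{E_1})$. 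I would then use two reductions modulo $2$: first, by the control theorem $s_p(E_i/F)=\corank_{\Z_p}\sel_p(E_i/F_{cyc})^\Gamma$ (finite kernel and cokernel by (a)), together with the self-duality of the Iwasawa module $X(E_i/F_{cyc})$ (Flach's skew-symmetric pairing), one obtains $s_p(E_i/F)\equiv\lambda_{E_i}\pmod 2$, as in the proof of \cite[Theorem 1.1]{Sh}; second, for $v\nmid p$ the number of primes of $F_{cyc}$ above $v$ is a power of $p$, hence odd (the cyclotomic $\Z_p$-extension is unramified at $v$ and $v$ cannot split completely in it), and these primes are $\Gamma$-conjugate, so $\tau^v_{E_i}\equiv\sigma^w_{E_i}\pmod 2$ for any $w\mid v$.

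Fixing such a $w=w_v$ for each $v\in\Sigma_0$, it then suffices to verify, for every $v\in\Sigma_0$,
\[(\sigma^{w_v}_{E_1}-\sigma^{w_v}_{E_2})+\mathbf 1_{\{v\in S_{E_1}\}}+\mathbf 1_{\{v\in S_{E_2}\}}+\mathbf 1_{\{v\in T\}}\equiv 0\pmod 2,\]
after which summing over $v$ and inserting the two identities above yields the theorem. This is a finite case check on the pair of reduction types of $(E_1,E_2)$ at $v$, via Section \ref{s2}. If both curves have good reduction, both non-split multiplicative, both additive, or one good and one non-split multiplicative, the two $\sigma$'s agree (Lemmas \ref{l2.1}, \ref{l2.8}, \ref{l2.2}, \ref{l2.12}; in the non-split cases $\delta$ is the same for both), and $v$ lies in none of $S_{E_1},S_{E_2},T$. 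If both are split multiplicative the $\sigma$'s agree (Lemma \ref{l2.7}) and $v\in S_{E_1}\cap S_{E_2}$. If one is good and one split multiplicative, or one split and one non-split multiplicative, the $\sigma$'s differ by $1$ (Lemmas \ref{l2.11}, \ref{l2.9}), compensated by the unique membership in $S_{E_1}\cup S_{E_2}$. The good--additive combination never occurs (Lemma \ref{l2.3}), and a multiplicative--additive combination forces $p=3$ (Lemma \ref{l2.10}); there $\sigma^{w_v}_{E_2}=0$ for the additive curve (Lemma \ref{l2.2}) and $\sigma^{w_v}_{E_1}$ is given by Lemma \ref{l2.7} or \ref{l2.8}, and one checks that when $\mu_3\not\subset F_v$ (so $v\in T$, $[F_v(\mu_3):F_v]=2$, and $\epsilon_p|_{G_v}=\delta$ for the non-split curve, both being the unique non-trivial unramified quadratic character of $G_v$) the $\sigma$- and $S$-contributions together are odd and cancel $\mathbf 1_{\{v\in T\}}$, while if $\mu_3\subset F_v$ they are even.

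The main obstacle is the congruence $s_p(E_i/F)\equiv\lambda_{E_i}\pmod 2$: this is not formal --- $\corank_{\Z_p}\sel_p(E_i/F)$ need not equal $\lambda_{E_i}$, only the parities match --- and it is exactly here that assumptions (a), (b) and the self-duality of the Iwasawa module enter; I would isolate it as a lemma and follow \cite{Sh}. The local bookkeeping of the third paragraph is otherwise routine given Section \ref{s2}, the one delicate point being the identity $\epsilon_p|_{G_v}=\delta$ in the $p=3$ additive/multiplicative case, which is precisely what produces the compensating $|T|$.
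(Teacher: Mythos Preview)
Your proposal is correct and follows essentially the same route as the paper's proof: equality of non-primitive $\lambda$-invariants via the residual Selmer group (the paper cites \cite[Equations 8 and 10]{Sh}), the decomposition $\lambda^{\Sigma_0}_{E_i}=\lambda_{E_i}+\sum_v\tau^v_{E_i}$ and the reduction $\tau^v_{E_i}\equiv\sigma^w_{E_i}\pmod 2$ (the paper cites \cite[Lemmas 3.1 and 3.2]{Sh}), the same case-by-case analysis via Section~\ref{s2}, and finally $s_p(E_i/F)\equiv\lambda_{E_i}\pmod 2$ (where the paper invokes \cite[Proposition 3.10]{Gr1} rather than Flach's pairing directly). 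Your expansion of the details behind the cited references, including the $p=3$ multiplicative--additive bookkeeping and the identification $\epsilon_p|_{G_v}=\delta$ when $\mu_3\not\subset F_v$, is accurate.
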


\begin{proof}

Under the assumptions (a) and (b), from \cite[Equations 8 and 10]{Sh} we have $ \lambda^{\Sigma_0}_{E_1}=\lambda^{\Sigma_0}_{E_2} $ .
For a prime $ v \nmid p $ of $ F $, let $ \tau_{E_1}^{v} $ denote the $ \Z_p $-rank of the Pontryagin dual of $ \h_v(F_{\text{cyc}},E_1) $ where $$ \h_v(F_{\text{cyc}},E_1):=\prod_{w \mid v} \hone(F_{\text{cyc},w}, E_1[p^{\infty}]). $$
From \cite[Lemma 3.1]{Sh}, we have $ \lambda^{\Sigma_0}_{E_i}=\lambda_{E_i}+\sum_{v \in \Sigma_0}\tau_{E_i}^{v}, i=1,2.  $
Now for each $ v \in \Sigma_0 $, choose a prime $ w $ of $ F_{\text{cyc}} $ dividing $ v $.
Let $ P $ be the set consisting of all such $ w. $
Then \cite[Lemma 3.2]{Sh} implies $$ \lambda^{\Sigma_0}_{E_i} \equiv \lambda_{E_i}+\sum_{w \in P}\sigma_{E_i}^{w} \enspace (\text{ mod} \enspace 2), \ i=1,2.  $$
Therefore $$ \lambda_{E_1}+\sum_{w \in P}\sigma_{E_1}^{w} \equiv \lambda_{E_2}+\sum_{w \in P}\sigma_{E_2}^{w} \enspace (\text{ mod} \enspace 2).  $$

Let $ v \nmid p $ be a finite prime of $ F $ such that $ E_1 $ and $ E_2 $ have same reduction type at $ v $. 
Then Lemmas \ref{l2.1}, \ref{l2.2}, \ref{l2.7} and \ref{l2.8} imply $ \sigma_{E_1}^{w} = \sigma_{E_2}^{w}. $
Lemma \ref{l2.3} implies that it is not possible to have a prime $ v $ such that $ E_1 $ has good reduction and $ E_2 $ has additive reduction at $ v $.
Next suppose $ E_1 $ has split multiplicative reduction and $ E_2 $ has either good or non-split multiplicative reduction at $ v $, then  Lemma \ref{l2.9} and Lemma \ref{l2.11} imply $ \sigma_{E_1}^{w} \equiv \sigma_{E_2}^{w} + 1 \,(\text{mod} \enspace 2)$.
Also if $ E_1 $ has good reduction and $ E_2 $ has non-split multiplicative reduction at $ v $, then $ \sigma_{E_1}^{w} =  \sigma_{E_2}^{w} $ follows from Lemma \ref{l2.12}. 

Let $ p \geq 5 $.
By Lemma \ref{l2.10}, it is not possible to have a prime $ v $ such that $ E_1 $ has multiplicative reduction and $ E_2 $ has additive reduction at $ v $. 
Therefore assume $ p=3, E_1 $ has split multiplicative reduction and $ E_2 $ has additive reduction at $ v. $
From Lemma \ref{l2.7}, we have $ \sigma_{E_1}^{w}=1 $ if $ \mu_p \subset F_v $ otherwise  $ \sigma_{E_1}^{w}=0. $
Since $ \sigma_{E_2}^{w}=0 $, we have $ \sigma_{E_1}^{w} \equiv \sigma_{E_2}^{w} + 1 \,(\text{mod} \enspace 2)$ if $ \mu_p \subset F_v $ otherwise  $ \sigma_{E_1}^{w}=\sigma_{E_2}^{w}=0. $
Next, assume $ E_1 $ has non-split multiplicative reduction at $ v. $
We have $ \epsilon_p \neq \delta $ if $ \mu_p \subset F_v $ otherwise $ \epsilon_p = \delta $ since $ p=3 $ and there exists a unique unramified quadratic character.
Hence Lemma \ref{l2.8} implies $ \sigma_{E_1}^{w}=0 $ if $ \mu_p \subset F_v $, else  $ \sigma_{E_1}^{w}=1. $
Since $ \sigma_{E_2}^{w}=0 $, we have $ \sigma_{E_1}^{w}=\sigma_{E_2}^{w}=0 $ if $ \mu_p \subset F_v $ otherwise $ \sigma_{E_1}^{w} \equiv \sigma_{E_2}^{w} + 1 \,(\text{mod} \enspace 2)$.

From the above comparisons of $ \sigma_{E_i}^{w}, i=1,2 $, we have
$$ \lambda_{E_1} + \mid S_{E_1} \mid \equiv \lambda_{E_2} + \mid S_{E_2} \mid + \mid T \mid (\text{ mod} \enspace 2). $$

{\cl By \cite[Proposition 3.10]{Gr1}, we have} $ s_p(E_i/F) \equiv \lambda_{E_i} (\text{ mod} \enspace 2), \ i=1,2. $
Therefore $$ s_p(E_1/F) + \mid S_{E_1} \mid \equiv s_p(E_2/F) + \mid S_{E_2} \mid + \mid T \mid (\text{ mod} \enspace 2). $$

\end{proof}

\section{$ p $-Selmer rank for supersingular elliptic curves} \label{s4}
Suppose $ E $ be an elliptic curve defined over $ \Q $ with good supersingular reduction at $ p $ and $ a_p(E):=1+p-\#\widetilde{E}(\F_p)=0. $ 
{\gl The assumption that $ a_p(E) = 0 $ is necessary to apply some crucial results of Kim \cite{Ki1}.
However for $ p \geq 5, \, E $ is supersingular at $ p $ if and only if $ a_p(E)=0 $ \cite[Exercise 5.10(b)]{Si}.}
{\rd Throughout this section, let $ F $ be a number field such that $ p $ splits completely over $ F. $}

\begin{definition}\cite[Section 2, Plus/Minus Selmer group]{Ko}
For $ n \geq 0 $, we define $ \sel^{\pm}_p(E/F_n) $ to be the kernel of 
\begin{equation} \label{e1.1}
{\bf f_n:} \enspace \hone(F_n, {\cl E[p^\infty]}) \longrightarrow \prod_{v}\dfrac{\hone(F_{n,v},{\cl E[p^\infty]})}{E^{\pm}(F_{n,v}) \otimes \Q_p/\Z_p}
\end{equation}
 and $ \sel^{\pm}_p(E/F_{cyc}):=\ilim_n \sel^{\pm}_p(E/F_n). $
Here $ v $ ranges over all places of $ F_n $, $ F_{n,v} $ is the completion of $ F_n $ at $ v $ and the map is induced by the restrictions of the Galois cohomology groups.
For notational convenience, let $F_{-1,v}=F_{0,v}=F_v. $
Then $$ E^{+}(F_{n,v}) := \{ P \in E(F_{n,v}) \mid \text{Tr}_{n/m+1}P \in E(F_{m,v}) \enspace \text{for even} \enspace m \enspace (0 \leq m < n) \} $$
$$ E^{-}(F_{n,v}) := \{ P \in E(F_{n,v}) \mid \text{Tr}_{n/m+1}P \in E(F_{m,v}) \enspace \text{for odd} \enspace m \enspace (-1 \leq m < n) \} $$
where $ \text{Tr}_{n/m}: E(F_{n,v}) \longrightarrow E(F_{m,v}) $ denotes the trace map for $ n \geq m $.

\end{definition}

\begin{remark}\label{rem}
For $ n=0 $, we have $E^{-}(F_{v})=E(F_{v}) $ hence $ \sel^{-}_p(E/F)=\sel_p(E/F). $ 
\end{remark}

Let $ E_1 $ and $ E_2 $ are two elliptic curves defined over $ \Q $ with good, supersingular reduction at an odd prime $ p $ and $ a_p (E_i)=0, i=1,2. $ 
{\rd Henceforth we make the following assumptions.}
\begin{assumption} \label{a4.1}
For $ E=E_1 \, \text{or} \, E_2$
\vspace{.2cm}

$ (\romannumeral 1) \, E[p] $ is an irreducible $ \text{Gal}(\overline{F} /F) $-module. 
\vspace{.1cm}

$ (\romannumeral 2) \, \sel^{-}_p(E/F_{cyc})[p] $ is a finite group.
\vspace{.1cm}
\end{assumption}
{\gl For $ F=\Q $, assumption $ (\romannumeral 1) $ is true \cite{Se} and assumption $ (\romannumeral 2) $ is conjectured to be true \cite[Conjecture 7.1]{Pe}.}
If assumption $ (\romannumeral 2) $ holds, then the Pontryagin dual $ X^{-}(E/F_\text{cyc}) $ of $ \sel^{-}_p(E/F_\text{cyc}) $ is a finitely generated $ \Z_p $-module and therefore also a $ \Lambda $-torsion module. 
Since $ p $ splits completely over $ F/\Q $, the Selmer group $ \sel^{-}_p(E/F_\text{cyc}) $ is $ p $-divisible \cite[Theorem 3.14]{Ki2}. 

Next we define the {\it imprimitive} plus/minus Selmer group for $ \Sigma_0 $ in (\ref{e3.1}). 
Recall that $\Sigma$ is a finite set of primes of $F$ containing primes of bad reduction of $E$, the primes dividing $p$ and infinite primes. 
For $ l \in \Sigma $ with $ l \nmid p $, we define a conventional local condition 
$$ \mathcal{H}_{l}^{\pm}(F_\text{cyc},E[p^{\infty}]) := \prod_{\eta \mid l}\dfrac{\hone(F_{\text{cyc},\eta},E[p^{\infty}])}{E(F_{\text{cyc},\eta}) \otimes \Q_p/\Z_p}. $$
Note that $ {E(F_{\text{cyc},\eta}) \otimes \Q_p/\Z_p}=0 $ if $ \eta $ is not above $ p. $
We will also let $ \mathcal{H}_{l}(F_\text{cyc},E[p^{\infty}]) $ denote $ \mathcal{H}_{l}^{\pm}(F_\text{cyc},E[p^{\infty}]) $ to emphasize $ \h_{l}^+ = \h_{l}^- $ when $ l \nmid p. $
For $ v \mid p $, put  
$$ \mathcal{H}_{v}^{\pm}(F_\text{cyc},E[p^{\infty}]) := \dfrac{\hone(F_{\text{cyc},v},E[p^{\infty}])}{E^{\pm}(F_{\text{cyc},v}) \otimes \Q_p/\Z_p} $$  

\begin{definition}\cite[Page 185]{Ki1}
$$ \sel^{\Sigma_0,\pm}_p(E/F_{\text{cyc}}) = \text{Ker} \left(\hone(F_\Sigma/F_\text{cyc}, E[p^{\infty}]) \longrightarrow \prod_{l \in \Sigma-\Sigma_0} \mathcal{H}_{v}^{\pm}(F_\text{cyc},E[p^{\infty}]) \right). $$
\end{definition}
Here $ F_\Sigma $ is the maximal extension of $ F $ unramified outside $ \Sigma$. 
We mention that when $\Sigma_0$ is empty, the imprimitive plus/minus Selmer group defined above is same as the usual plus/minus Selmer group defined in (\ref{e1.1}). 
These {\rd plus/minus Selmer groups fit into the following exact sequence }
\begin{equation}\label{e4.1}
0 \longrightarrow {\cl \sel^-_p}(E/F_{\text{cyc}}) \longrightarrow {\cl \sel^{\Sigma_0,-}_p}(E/F_{\text{cyc}}) \longrightarrow \prod_{v \in \Sigma_0} \mathcal{H}_{v}(F_{\text{cyc}},E[p^\infty]) \longrightarrow 0.
\end{equation}
{\rd The exactness of the above sequence is proven for $ F=\Q $ and a larger set $ \Sigma_0 $} in \cite[Corollary 2.5]{Ki1}.
{\cl Under the Assumption $ \ref{a4.1} (\romannumeral 2) $ that} $ {\cl \sel^-_p}(E/F_\text{cyc}) $ is $ \Lambda $-cotorsion, similar assertion holds for our chosen $ F $ and $ \Sigma_0 $ \cite[Section 2, Page 3259]{Sh}.
It is well known that $ \mathcal{H}_{v}(F_{\text{cyc}},E[p^\infty]) $  is a divisible group for every finite prime $ v $ of $ F $ \cite[Lemma 4.5 and successive paragraph]{Gr1}.
Hence $ {\cl \sel^{\Sigma_0,-}_p}(E/F_{\text{cyc}}) $ is a divisible group.
Therefore $$ {\cl \lambda^{\Sigma_0, -}_{E}} = \text{rank}_{\Z/p\Z} \, {\cl X^{\Sigma_0, -}}(E/F_\text{cyc})/p = \text{rank}_{\Z/p\Z} \, {\cl \sel_p^{\Sigma_0, -}}(E/F_\text{cyc})[p]. $$
Next we define a "plus/minus {\it imprimitive} Selmer" group on $ E[p]. $ 
For $ l \nmid p $, we let $$ \mathcal{H}_{l}^{\pm}(F_\text{cyc},E[p])=\prod_{\eta \mid l}\hone(F_{\text{cyc},\eta},E[p])/\hone_{un}(F_{\text{cyc},\eta},E[p]) $$ {\cl where $$ \hone_{un}(F_{\text{cyc},\eta},E[p]) = \ilim_n \, \text{Ker} \left( \hone(F_{n,l}, E[p]) \longrightarrow \hone(I_{n,l}, E[p]) \right)  $$
and $ I_{n,l} $ denotes the inertia group of the decomposition group of $ F_{n,l}. $ }
We also let $ \mathcal{H}_l $ denote $ \mathcal{H}_l^{\pm} $ to emphasize $ \mathcal{H}_l^+ = \mathcal{H}_l^- $ {\cl when $ l \neq p $}.
For $ v|p $, we let $$ \mathcal{H}_{v}^{\pm}(F_\text{cyc},E[p])=\hone(F_{\text{cyc},v},E[p])/(E^{\pm}(F_{\text{cyc},v})/pE^{\pm}(F_{\text{cyc},v})). $$

\begin{definition} \cite[Definition 2.7]{Ki1}
$$ \sel^{\Sigma_0,\pm}(E[p]/F_{\text{cyc}}) = \text{Ker} \left(\hone(F_\Sigma/F_\text{cyc}, E[p]) \longrightarrow \prod_{l \in \Sigma-\Sigma_0} \mathcal{H}_{l}^{\pm}(F_\text{cyc},E[p]) \right). $$
\end{definition}
{\cl Using analogous arguments from \cite[Section 2, Page 3259]{Sh}, one can extend \cite[Proposition 2.10]{Ki1} for our chosen $ F $ and $ \Sigma_0 $ to prove that $ \sel^{\Sigma_0,-}(E[p]/F_{\text{cyc}}) \cong \sel_p^{\Sigma_0,-}(E/F_{\text{cyc}})[p] $. 
Hence $$ \lambda^{\Sigma_0, -}_{E}= \text{rank}_{\Z/p\Z} \enspace {\cl \sel}^{\Sigma_0, -}(E[p]/F_\text{cyc}). $$ }
Now suppose $ E_1[p] \cong E_2[p]. $
Let $ v \mid p $ be a finite prime of $ F $ and $ w \mid v $ be a prime of $ F_{cyc} $.
Since  $ p $ splits completely over $ F/\Q $, we have $ F_v=\Q_p. $
Therefore from \cite[Proposition 2.9]{Ki1}, it follows that {\cl $$\lambda^{\Sigma_0, -}_{E_1} = \lambda^{\Sigma_0, -}_{E_2}.$$}
Since Assumption $ \ref{a4.1} (\romannumeral 2) $ implies {\cl $ \sel^{-}_p(E/F_\text{cyc}) $} is $ \Lambda $-cotorsion, from \cite[Lemma 3.1]{Sh} we have
\begin{equation} \label{e4.3}
{\cl \lambda^{\Sigma_0, -}_{E_1} = \lambda^{-}_{E_i}} + \sum_{v \in \Sigma_0} \tau_{E_i}^{v} \enspace \text{for} \enspace i=1,2.
\end{equation}

To prove our theorem comparing Selmer rank of supersingular elliptic curves $E_1$ and $E_2$ at primes dividing $p$, we need the following. 

\begin{theorem}\label{par}

Let  $ E $ be an elliptic curve defined over a number field $F$ with good, supersingular reduction at primes dividing $ p $.
Assume that $ \sel_p^{-}(E/F_{cyc}) $ is $ \Lambda $-cotorsion. 
Then $$ \text{corank}_{\Z_p}(\sel_p^{-}(E/F)) \equiv \lambda^{-}_E \, (mod \; 2) $$
where $ \lambda^{-}_E$ denotes the Iwasawa $\lambda$-invariant of the Pontryagin dual of  $\sel_p^{-}(E/F_{cyc})$. 

\end{theorem}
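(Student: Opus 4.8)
The plan is to adapt Greenberg's proof of the corresponding statement in the ordinary case \cite[Proposition 3.10]{Gr1} to the minus Selmer group, with Kobayashi's local conditions and Kim's control theorem replacing those of Mazur. Write $X:=X^{-}(E/F_{cyc})$ for the Pontryagin dual of $\sel^{-}_p(E/F_{cyc})$; by hypothesis it is a finitely generated torsion $\Lambda$-module. The first step is to pin down its structure. Since $p$ splits completely over $F$, the group $\sel^{-}_p(E/F_{cyc})$ is $p$-divisible \cite[Theorem 3.14]{Ki2}, so $X$ is $\Z_p$-torsion free; being $\Lambda$-torsion it then has vanishing $\mu$-invariant, and hence over the principal ideal domain $\Z_p$ it is a finitely generated free $\Z_p$-module of rank $\lambda^{-}_E$, equipped with a continuous $\Z_p$-linear action of a fixed topological generator $\gamma$ of $\Gamma$. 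As $\Gamma$ is pro-$p$, $\gamma$ acts unipotently on $X/pX$, so each eigenvalue $\alpha_j$ $(1\le j\le\lambda^{-}_E)$ of $\gamma$ on $X\otimes_{\Z_p}\overline{\Q_p}$ is congruent to $1$ modulo the maximal ideal of $\overline{\Z_p}$; in particular $-1$ is not among the $\alpha_j$, since $p$ is odd. Under $1+T\mapsto\gamma$ one has $\mathrm{char}_{\Lambda}(X)=\bigl(\prod_{j}((1+T)-\alpha_j)\bigr)$.

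The second step is control. By Remark \ref{rem} we have $\sel^{-}_p(E/F)=\sel_p(E/F)$, and Kim's control theorem \cite{Ki1}, combined with Kobayashi's description of $E^{\pm}(F_{cyc,w})\otimes\Q_p/\Z_p$ at the primes above $p$, yields a natural restriction map $\sel_p(E/F)\longrightarrow\sel^{-}_p(E/F_{cyc})^{\Gamma}$ with finite kernel and cokernel. The kernel is $\hone(\Gamma,E(F_{cyc})[p^{\infty}])$, which vanishes: at the supersingular prime $p$ the image of inertia in $\GL_2(\F_p)$ is cyclic of order prime to $p$ and acts irreducibly, so it is unchanged over the pro-$p$, totally ramified extension $F_{cyc,w}$ of $F_v=\Q_p$, forcing $E(F_{cyc})[p]=0$. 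The cokernel is a subquotient of finitely many finite local defect terms. Dualizing, and using the exact sequence $0\to X^{\Gamma}\to X\by{\gamma-1}X\to X_{\Gamma}\to 0$, one gets $\corank_{\Z_p}\sel_p(E/F)=\rk_{\Z_p}X_{\Gamma}=\rk_{\Z_p}X^{\Gamma}$; that is, $\corank_{\Z_p}\sel_p(E/F)$ is the dimension of the $1$-eigenspace of $\gamma$ on $X\otimes\overline{\Q_p}$.

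The third step is the functional equation. Kobayashi's minus local conditions are their own annihilators under local Tate duality via the Weil pairing, so the generalized Cassels--Tate pairing exhibits $X$ as pseudo-isomorphic to its $\iota$-twisted $\Lambda$-adjoint; hence $\mathrm{char}_{\Lambda}(X)\doteq\mathrm{char}_{\Lambda}(X)^{\iota}$, which by the first step means the multiset $\{\alpha_j\}$ is stable under $\alpha\mapsto\alpha^{-1}$. Moreover, because the Weil pairing is alternating while the degree-$1$ cup product is anti-symmetric, the induced pairing on $X\otimes\Q_p$ is symmetric, and the resulting orthogonality forces the even-sized Jordan blocks of $\gamma$ at the self-dual eigenvalue $1$ to occur in pairs, so that the geometric and algebraic multiplicities of the eigenvalue $1$ agree modulo $2$. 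The eigenvalues $\alpha_j\ne 1$ are also $\ne -1$, hence they pair off as $(\alpha,\alpha^{-1})$ with equal multiplicities and contribute an even amount to $\lambda^{-}_E=\dim_{\Q_p}(X\otimes\Q_p)$. Combining, $\corank_{\Z_p}\sel^{-}_p(E/F)$, being the geometric multiplicity of $1$, is congruent modulo $2$ to the algebraic multiplicity of $1$, and the latter is congruent to $\lambda^{-}_E$.

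The hard part is the third step: passing from the functional equation $\mathrm{char}_{\Lambda}(X)\doteq\mathrm{char}_{\Lambda}(X)^{\iota}$ to a parity statement requires the finer fact that the self-duality pairing on $X$ is orthogonal rather than symplectic, so that even-sized Jordan blocks at the eigenvalue $1$ cancel in pairs, together with a careful verification that it is genuinely $\sel^{-}_p(E/F_{cyc})$ that is self-dual, i.e.\ that Kobayashi's minus conditions are self-orthogonal and not paired against the plus conditions. A secondary technical point is the finiteness of the control cokernel at the primes above $p$, where a naive ``finite-part'' local condition is unavailable and one must use the explicit $\Lambda$-module structure of $E^{\pm}(F_{cyc,w})\otimes\Q_p/\Z_p$.
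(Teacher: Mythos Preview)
Your proposal takes a genuinely different route from the paper's. The paper follows Guo \cite{Gu1}: it first constructs a nondegenerate skew-symmetric pairing on $\sel_p^{-}(E/F)$ with kernel the maximal divisible subgroup (Theorem~\ref{pairing}), obtained by substituting $D=E^{-}(F_v)\otimes\Q_p/\Z_p$ for $D=E(F_v)\otimes\Q_p/\Z_p$ at primes $v\mid p$ in Guo's construction; it then invokes finiteness of $E[p^{\infty}](F_{cyc})$ \cite{Ri} together with \cite[Lemmas~3,~6 and Proposition~6]{Gu1} to climb the cyclotomic tower and extract the parity. In other words, the duality input is a classical finite-level Cassels-type pairing, and the Iwasawa step is a growth argument for $|(X_{\Gamma_n})_{\mathrm{tors}}|$ modulo squares. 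You instead work directly on $X=X^{-}(E/F_{cyc})$: control at level zero, functional equation for the characteristic ideal, and a Jordan-block parity constraint at the eigenvalue $1$ coming from an orthogonal structure.

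Two cautions. First, despite your attribution, Greenberg's actual proof of \cite[Proposition~3.10]{Gr1} is not the eigenvalue/Jordan-block argument you sketch; it too relies on the finite-level skew-symmetric pairing combined with growth along the tower, so it is the ordinary-reduction prototype of the paper's Guo-based argument rather than of yours. Second, the step you correctly flag as hard is genuinely delicate in your framework: what $\Lambda$-adic duality naturally produces is an $\iota$-Hermitian $\Lambda$-valued pairing (equivalently, a pseudo-isomorphism of $X$ with its $\iota$-twisted Iwasawa adjoint), not a $\gamma$-invariant symmetric $\Q_p$-bilinear form on $X\otimes\Q_p$; converting the former into the orthogonal constraint ``even-sized Jordan blocks at $1$ occur in pairs'' requires real work (this is the territory of \cite{Ne}). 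The paper's route sidesteps this entirely, since from the skew-symmetric pairing at each finite layer one reads off at once that $|\sel_p^{-}(E/F_n)/\mathrm{div}|$ is a perfect square, and the rest is bookkeeping.
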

The proof of the above theorem is analogous to the proof of a similar theorem proved by Guo for ordinary elliptic curves at primes dividing $p$ \cite{Gu1}. To prove his result in ordinary case, Guo first proves the following.

\begin{theorem}

For every elliptic curve $E$ defined over $F$, we have a nondegenerate, skew-symmetric pairing 
$$ \sel_p(E/F) \times \sel_p(E/F) \longrightarrow \Q_p/\Z_p $$
whose kernel on either side is precisely the maximal divisible subgroup.

\end{theorem}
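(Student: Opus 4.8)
The plan is to deduce this statement from the classical Cassels--Tate pairing on the Tate--Shafarevich group, combined with the Kummer description of the $p^{\infty}$-Selmer group. First I would recall the fundamental exact sequence
$$ 0 \longrightarrow E(F)\otimes\Q_p/\Z_p \longrightarrow \sel_p(E/F) \longrightarrow \mathrm{Sha}(E/F)[p^{\infty}] \longrightarrow 0, $$
obtained by passing to the limit over $n$ in the Kummer sequences $0\to E(F)/p^nE(F)\to\hone(F,E[p^n])\to\hone(F,E)[p^n]\to 0$ together with their local analogues. Since $E(F)$ is a finitely generated abelian group, $E(F)\otimes\Q_p/\Z_p\cong(\Q_p/\Z_p)^{\rk E(F)}$ is divisible; in particular $\sel_p(E/F)$ is a cofinitely generated $\Z_p$-module, so its maximal divisible subgroup is well defined.

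Next I would pin down that maximal divisible subgroup. Using the elementary fact that an extension of a divisible abelian group by a divisible abelian group is divisible, the preimage in $\sel_p(E/F)$ of the maximal divisible subgroup of $\mathrm{Sha}(E/F)[p^{\infty}]$ is itself divisible, hence equals the maximal divisible subgroup of $\sel_p(E/F)$; consequently $\sel_p(E/F)$ and $\mathrm{Sha}(E/F)[p^{\infty}]$ have canonically identified quotients by their maximal divisible subgroups. Now I would invoke the Cassels--Tate pairing $\mathrm{Sha}(E/F)\times\mathrm{Sha}(E/F)\to\Q/\Z$, which is skew-symmetric and whose kernel on each side is exactly the maximal divisible subgroup of $\mathrm{Sha}(E/F)$. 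Passing to the $p$-primary component and checking that the left kernel of the induced pairing on $\mathrm{Sha}(E/F)[p^{\infty}]$ is precisely its maximal divisible subgroup, I would then pull the pairing back along the surjection $\sel_p(E/F)\twoheadrightarrow\mathrm{Sha}(E/F)[p^{\infty}]$. The resulting pairing on $\sel_p(E/F)$ is skew-symmetric, and by the previous paragraph its kernel on either side is the full preimage of the maximal divisible subgroup of $\mathrm{Sha}(E/F)[p^{\infty}]$, namely the maximal divisible subgroup of $\sel_p(E/F)$.

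Unwinding where the Cassels--Tate input itself comes from, the construction at finite level uses the self-duality of the Galois module $E[p^n]$ provided by the Weil pairing, the fact that the local Kummer subspaces $E(F_v)/p^nE(F_v)\subset\hone(F_v,E[p^n])$ are their own annihilators under local Tate duality, and global Poitou--Tate duality; these yield a perfect, skew-symmetric pairing $\sel_{p^n}(E/F)\times\sel_{p^n}(E/F)\to p^{-n}\Z/\Z$ for each $n$, and the Cassels--Tate pairing is what governs how these assemble across $\sel_p(E/F)=\ilim_n\sel_{p^n}(E/F)$ versus $\plim_n\sel_{p^n}(E/F)$. The step I expect to be the main obstacle is the non-degeneracy of the Cassels--Tate pairing modulo divisible elements without any finiteness hypothesis on $\mathrm{Sha}(E/F)$, together with the bookkeeping needed to isolate the $p$-primary part; the skew-symmetry, by contrast, is forced by the alternating property of the Weil pairing. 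Finally I would record, as a byproduct, that for an elliptic curve this pairing is in fact alternating, so that $\sel_p(E/F)$ modulo its maximal divisible subgroup is a finite group of square order.
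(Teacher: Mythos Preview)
Your approach via the Cassels--Tate pairing on $\mathrm{Sha}(E/F)$ is correct and yields the statement, but it is not the route the paper has in mind. The paper does not prove this theorem itself; it quotes it as a result of Guo \cite{Gu1}, whose method is a direct construction of the pairing on $\sel_p(E/F)$ from local and global duality, bypassing $\mathrm{Sha}$ entirely. Concretely, Guo works with the groups $J_n=\Ker\big(\hone(F_v,E[p^n])\to\hone(F_v,E[p^\infty])/D\big)$ for $D=E(F_v)\otimes\Q_p/\Z_p$, proves the key exactness $J_r\to J_{r+s}\to J_s\to 0$ for large $r,s$, and uses the Weil pairing together with Poitou--Tate to assemble a skew-symmetric pairing directly on the Selmer group. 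Your final paragraph in fact sketches this very mechanism, so you are aware of it, but you treat it as the provenance of Cassels--Tate rather than as the primary construction.

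The distinction matters in this paper because the theorem is only a stepping stone: the authors immediately want the analogous pairing on the Kobayashi minus Selmer group $\sel_p^-(E/F)$ (their Theorem~\ref{pairing}), obtained by replacing $D$ with $E^-(F_v)\otimes\Q_p/\Z_p$ at primes $v\mid p$. There is no ``minus Tate--Shafarevich group'' sitting in a Kummer sequence to which one could apply a classical Cassels--Tate argument, so your reduction to $\mathrm{Sha}$ does not transport to the case the paper actually needs. Guo's direct construction, by contrast, is insensitive to which divisible local condition one imposes, and that flexibility is exactly what the paper exploits. So your argument is fine for the statement as written, but the paper's choice of method is dictated by the generalization that follows.
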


Let $ v $ be a finite prime of $ F $ with $ v \mid p $ and $ D $ be a divisible subgroup of $ \hone(F_{v},{\cl E[p^\infty]}) $.
Define $$ J_n=\text{Ker} \, \{ \hone(F_{v},E[p^n]) \longrightarrow \hone(F_{v},{\cl E[p^\infty]})/D \} .$$
Then from the exact sequence $ 0\rightarrow E[p^r] \rightarrow E[p^{r+s}] \xrightarrow{p^r} E[p^s] \rightarrow 0 $, we have the induced exact sequence,
$$ J_r\rightarrow J_{r+s}\rightarrow J_s\longrightarrow 0. $$
for $ r,s $ large \cite[Proposition 1]{Gu1}. 
This is one of the key observations of Guo used in the proof of the above theorem. 
He considers $D$ to be equal to $E(F_v)\otimes \Q_p/\Z_p$. 
By replacing  $D=E(F_v)\otimes \Q/\Z_p$ with  $D= E^-(F_v)\otimes \Q_p/\Z_p$ for every prime $v \mid p$ and using arguments similar to \cite{Gu1}, we obtain the following. 

\begin{theorem}\label{pairing}

For every elliptic curve $E$ defined over $F$ with good and supersingular reduction at primes dividing $p$, we have a nondegenerate, skew-symmetric pairing 
$$ \sel_p^-(E/F) \times \sel_p^-(E/F) \longrightarrow \Q_p/\Z_p $$
whose kernel on either side is precisely the maximal divisible subgroup.

\end{theorem}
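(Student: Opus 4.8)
The plan is to adapt, to the supersingular setting, the construction Guo uses for ordinary elliptic curves \cite{Gu1}, altering only the local conditions at the primes above $p$. Since $E$ is an elliptic curve, the Weil pairing makes $E[p^n]$ its own Cartier dual, so local Tate duality gives, at every place $v$ of $F$, a perfect skew-symmetric pairing $\hone(F_v,E[p^n])\times\hone(F_v,E[p^n])\to\Z/p^n\Z$, compatible as $n$ varies, and for which the sum of local invariants vanishes on global classes. Working at level $p^n$, one cuts out a Selmer group by the local subgroups $J_{n,v}\subset\hone(F_v,E[p^n])$ given by the images of $E(F_v)/p^n$ for $v\nmid p$ and --- this is the only change from \cite{Gu1} --- by $J_{n,v}=\ker\!\bigl(\hone(F_v,E[p^n])\to\hone(F_v,E[p^\infty])/D_v\bigr)$ with $D_v=E^-(F_v)\otimes\Q_p/\Z_p$ for $v\mid p$. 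One then builds the global pairing in the usual Cassels--Flach fashion: lift one class to a cochain on $F_\Sigma/F$, localize, pair place by place against the second class, and use global reciprocity together with the fact that each $J_{n,v}$ is its own annihilator under local duality to see the pairing is well defined. At $v\nmid p$ the self-orthogonality is the standard property of the unramified condition; at $v\mid p$ it is Kobayashi's orthogonality computation \cite{Ko}, which here is especially transparent because $E^-(F_v)=E(F_v)$ at the base field by Remark \ref{rem}, so $D_v$ is just the ordinary local condition.

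Skew-symmetry of the induced pairing on $\sel_p^-(E/F)$ follows from the alternating Weil pairing, exactly as for the Cassels--Tate pairing on Shafarevich--Tate groups; since $p$ is odd there is no $2$-torsion subtlety. The substantive point is that the kernel on either side is the maximal divisible subgroup. Here I would invoke Guo's key observation \cite[Proposition 1]{Gu1}: for a divisible subgroup $D$ of $\hone(F_v,E[p^\infty])$ and $J_n=\ker(\hone(F_v,E[p^n])\to\hone(F_v,E[p^\infty])/D)$, the sequence $J_r\to J_{r+s}\to J_s\to0$ arising from $0\to E[p^r]\to E[p^{r+s}]\xrightarrow{p^r}E[p^s]\to0$ is exact for $r,s$ large. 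One applies this with $D=D_v=E^-(F_v)\otimes\Q_p/\Z_p$, noting that this is a divisible subgroup of $\hone(F_v,E[p^\infty])$ of finite $\Z_p$-corank (equal to $[F_v:\Q_p]=1$, as $p$ splits completely over $F$), so the conclusion of \cite[Proposition 1]{Gu1} carries over verbatim. Feeding this stabilization into the limit argument of \cite{Gu1} --- the finite-level pairings are perfect, and the transition maps between consecutive $p^n$-levels have kernel and cokernel controlled by the sequences above --- one obtains in the limit over $n$ a pairing on $\sel_p^-(E/F)$ whose radical is precisely $\bigcap_n p^n\sel_p^-(E/F)$, i.e.\ the maximal divisible subgroup.

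I expect the main obstacle to be bookkeeping rather than conceptual: one must track the $\pm$ local conditions through Kobayashi's self-duality at $v\mid p$ and confirm that the compatibilities needed to run Guo's descent survive the replacement of $E(F_v)\otimes\Q_p/\Z_p$ by $E^-(F_v)\otimes\Q_p/\Z_p$ --- transparent at the base field by Remark \ref{rem}, but the correct formulation to have in hand when the pairing is used over the layers $F_n$ of $F_{cyc}$, where $E^-$ differs genuinely from $E$. As a consistency check, Remark \ref{rem} gives $\sel_p^-(E/F)=\sel_p(E/F)$, so the statement also follows at once from the pairing on $\sel_p(E/F)$ recalled above.
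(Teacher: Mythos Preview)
Your approach is exactly the paper's: replace $D=E(F_v)\otimes\Q_p/\Z_p$ by $D=E^-(F_v)\otimes\Q_p/\Z_p$ at primes above $p$ in Guo's construction \cite{Gu1}, invoke \cite[Proposition 1]{Gu1} for the stabilization of the $J_n$'s, and supply skew-symmetry via the Weil pairing --- the paper cites \cite[Sections 3.3 \& 3.4]{Gu} for nondegeneracy and skew-symmetry in just this way. Your closing observation that Remark \ref{rem} collapses the statement, as literally written over $F$, to the preceding theorem on $\sel_p(E/F)$ is sharper than anything the paper says explicitly; as you correctly anticipate, the genuine content of the construction is at the higher layers $F_n$ (where $E^-\neq E$), which is what is actually needed to deduce Theorem \ref{par}.
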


The proofs of nondegeneracy and skew-symmetric are included in \cite[Sections 3.3 \& 3.4]{Gu}.
{\rd In fact, for an arbitrary $ p $-ordinary Galois representation $ V $ with Tate module $ T $, Guo derived} the skew-symmetric property from the existence of a $ G_F $-module isomorphism $$ \eta: A \longrightarrow A^*=\text{Hom} \, (T, \Q_p/\Z_p(1)) \enspace {\rd \text{where} \enspace A:=V/T} $$ such that $ (\eta a)(a)=0 $ for $ a \in A. $
This isomorphism {\rd is eventually used to get} a pairing $$ e_s : A_s \times A_s \longrightarrow \Q_p/\Z_p(1) \enspace {\rd \text{where} \enspace A_s:=A[p^s]} $$ such that $ e_s(a,a^{\prime}) = -e_s(a^{\prime},a) $. 
For elliptic curves such a pairing, $ e_s $, {\rd exists already} from the Weil pairing.

Now Theorem \ref{par} can be proved easily using Theorem \ref{pairing}, the fact that $ E[p^\infty](F_{\text{cyc}}) $ is finite \cite[Theorem 1]{Ri} and methods of \cite{Gu1} (see also \cite[Lemmas 3 and 6, Proposition 6]{Gu1}.
Recall that for an elliptic curve $ E $ defined over $ F $, $ s_p(E/F) $ denotes the rank of the Pontryagin dual of $ \sel_p(E/F) $.
Then we have the following immediate corollary to Theorem \ref{par} and Remark \ref{rem}.
\begin{corollary}\label{c4.9}
Let $ E $ be an elliptic curve defined over a number field $F$ with good, supersingular reduction at primes dividing $ p $.
Assume that $ \sel_p^{-}(E/F_{cyc}) $ is $ \Lambda $-cotorsion. 
Then $$s_p(E/F) \equiv \lambda^{-}_E \, (mod \; 2) $$
\end{corollary}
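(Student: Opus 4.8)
The plan is to deduce Corollary \ref{c4.9} directly from Theorem \ref{par} and Remark \ref{rem}, so the argument is essentially a sequence of identifications. The starting point is the observation, recorded in Remark \ref{rem}, that at the ground level the minus Selmer group is nothing but the classical $p$-Selmer group: for $n=0$ one has $E^{-}(F_v)=E(F_v)$ at every prime $v\mid p$, while for $v\nmid p$ the local condition defining $\sel^{-}_p(E/F)$ is by construction the usual (unramified) one, identical to that defining $\sel_p(E/F)$. Hence $\sel^{-}_p(E/F)=\sel_p(E/F)$ as subgroups of $\hone(F,E[p^\infty])$.

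Next I would unwind the definition of $s_p(E/F)$. By definition $s_p(E/F)$ is the $\Z_p$-rank of the Pontryagin dual of $\sel_p(E/F)$, which is the same as $\corank_{\Z_p}\sel_p(E/F)$. Combining this with the equality $\sel^{-}_p(E/F)=\sel_p(E/F)$ from the previous step gives $s_p(E/F)=\corank_{\Z_p}\sel^{-}_p(E/F)$.

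Finally I would invoke Theorem \ref{par}: under the standing hypothesis that $\sel^{-}_p(E/F_{cyc})$ is $\Lambda$-cotorsion, it asserts $\corank_{\Z_p}(\sel^{-}_p(E/F))\equiv\lambda^{-}_E\pmod 2$. Substituting the identity $s_p(E/F)=\corank_{\Z_p}\sel^{-}_p(E/F)$ yields $s_p(E/F)\equiv\lambda^{-}_E\pmod 2$, which is the assertion of the corollary.

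Since every step is a direct substitution, I do not anticipate a genuine obstacle here; the corollary is, as the text already indicates, immediate once Theorem \ref{par} is in hand. The only point that merits a line of verification is the comparison of local conditions at primes $v\nmid p$ in the definitions of $\sel^{-}_p(E/F)$ and $\sel_p(E/F)$, but this is transparent because $E^{+}=E^{-}$ away from $p$ and both Selmer groups impose the same condition at such primes.
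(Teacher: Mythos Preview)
Your proposal is correct and matches the paper's approach exactly: the paper states that Corollary~\ref{c4.9} is an immediate consequence of Theorem~\ref{par} together with Remark~\ref{rem}, and your argument simply unwinds those two ingredients in the obvious way. No separate proof is given in the paper beyond that sentence, so your write-up is, if anything, slightly more detailed than what the text provides.
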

\begin{theorem} \label{t4.11}
Let $ E_1 $ and $ E_2 $ be two elliptic curves defined over $ \Q $ with good and supersingular reduction at an odd prime $ p. $ 
We further assume the following.

(a) $ E_1[p] $ is an irreducible $ \text{Gal}(\overline{F} /F) $-module.

(b) $ \sel^{-}_p(E_1/F_{cyc})[p] $ is finite.

(c) $p$ splits completely over $F$.

If $ E_1[p] \cong E_2[p] $ as $ \text{Gal} \, (\overline{\Q}/\Q) $ modules then 
\begin{equation}
s_p(E_1/F) + \mid S_{E_1} \mid \equiv s_p(E_2/F) + \mid S_{E_2} \mid + \mid T \mid (\text{ mod} \enspace 2)
\end{equation}

\end{theorem}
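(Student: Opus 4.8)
The plan is to run the argument of Theorem~\ref{t3.2} essentially verbatim, with the imprimitive minus Selmer group and its $\lambda$-invariant replacing the ordinary ones throughout, using the material assembled in this section. First I would note, as in the ordinary case, that assumption~(b) for $E_1$ together with the congruence $E_1[p]\cong E_2[p]$ forces the corresponding finiteness for $E_2$: the mod-$p$ object $\sel^{\Sigma_0,-}(E[p]/F_{cyc})$ depends only on $E[p]$ and on the plus/minus local conditions at the primes above $p$, and these conditions agree for $E_1$ and $E_2$ by \cite[Proposition 2.9]{Ki1} (here $F_v=\Q_p$ for every $v\mid p$ since $p$ splits completely over $F$); hence $X^-(E_i/F_{cyc})$ is $\Lambda$-torsion and the invariants $\lambda^-_{E_i}$, $\lambda^{\Sigma_0,-}_{E_i}$ are defined for $i=1,2$. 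The same input \cite[Proposition 2.9]{Ki1} gives, as recorded just before Theorem~\ref{par}, that $\lambda^{\Sigma_0,-}_{E_1}=\lambda^{\Sigma_0,-}_{E_2}$. Combining this with the splitting (\ref{e4.3}), namely $\lambda^{\Sigma_0,-}_{E_i}=\lambda^-_{E_i}+\sum_{v\in\Sigma_0}\tau^v_{E_i}$ for $i=1,2$, yields
$$ \lambda^-_{E_1}+\sum_{v\in\Sigma_0}\tau^v_{E_1}=\lambda^-_{E_2}+\sum_{v\in\Sigma_0}\tau^v_{E_2}. $$

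Next I would pass from the $\tau$'s to the $\sigma$'s modulo $2$. Every $v\in\Sigma_0$ satisfies $v\nmid p$: since $E_i$ has good reduction at $p$ one has $p\nmid N_i$, and $\overline{N_i}$ divides $N_i$ away from $p$, so $N_i/\overline{N_i}$ is prime to $p$; thus $\Sigma_0$ consists precisely of primes to which the local lemmas of Section~\ref{s2} apply. For each $v\in\Sigma_0$ fix a prime $w=w(v)$ of $F_{cyc}$ lying above $v$ and put $P=\{w(v):v\in\Sigma_0\}$. Since $v\nmid p$, the prime $v$ is finitely decomposed in $F_{cyc}/F$, with an odd (in fact a $p$-power) number of primes above it, so $\tau^v_{E_i}=\sum_{w\mid v}\sigma^w_{E_i}\equiv\sigma^{w(v)}_{E_i}\pmod 2$ by \cite[Lemma 3.2]{Sh}. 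The displayed identity therefore reduces to
$$ \lambda^-_{E_1}+\sum_{w\in P}\sigma^w_{E_1}\equiv\lambda^-_{E_2}+\sum_{w\in P}\sigma^w_{E_2}\pmod 2. $$

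Now I would carry out the same comparison of $\sigma^w_{E_1}$ and $\sigma^w_{E_2}$ at each $w\in P$ as in the proof of Theorem~\ref{t3.2}, organised by the reduction types of $E_1$ and $E_2$ at the underlying prime $v$: Lemmas~\ref{l2.1}, \ref{l2.2}, \ref{l2.7}, \ref{l2.8} give $\sigma^w_{E_1}=\sigma^w_{E_2}$ when the types agree; Lemma~\ref{l2.3} rules out the good--additive case; Lemmas~\ref{l2.9} and \ref{l2.11} give a discrepancy of $1$ modulo $2$ exactly when one of the two curves has split multiplicative reduction and the other has good or non-split multiplicative reduction; Lemma~\ref{l2.12} gives equality in the good--non-split case; for $p\geq 5$ Lemma~\ref{l2.10} leaves no multiplicative--additive prime and $T=\emptyset$; and for $p=3$ the multiplicative--additive primes are treated by Lemmas~\ref{l2.2}, \ref{l2.7} and \ref{l2.8}, using that $p=3$ admits a unique unramified quadratic character, so that the explicit values of $\sigma^w_{E_i}$ there show, in each sub-case (split versus non-split multiplicative reduction, and $\mu_p\subset F_v$ or not), that $\sigma^w_{E_1}-\sigma^w_{E_2}$ modulo $2$ equals the contribution of $v$ to $\mid S_{E_1}\mid-\mid S_{E_2}\mid+\mid T\mid$. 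Summing over $w\in P$ turns the last congruence into
$$ \lambda^-_{E_1}+\mid S_{E_1}\mid\equiv\lambda^-_{E_2}+\mid S_{E_2}\mid+\mid T\mid\pmod 2. $$

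Finally I would apply Corollary~\ref{c4.9}, which under assumptions~(b) and (c) gives $s_p(E_i/F)\equiv\lambda^-_{E_i}\pmod 2$ for $i=1,2$, and substitute to obtain $s_p(E_1/F)+\mid S_{E_1}\mid\equiv s_p(E_2/F)+\mid S_{E_2}\mid+\mid T\mid\pmod 2$, as asserted. I do not expect a serious obstacle: the substantive inputs---the equality $\lambda^{\Sigma_0,-}_{E_1}=\lambda^{\Sigma_0,-}_{E_2}$, the splitting (\ref{e4.3}), the parity statement of Corollary~\ref{c4.9}, and the local Lemmas of Section~\ref{s2}---are all already in hand. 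The only points that call for genuine care are the transfer of assumption~(b) from $E_1$ to $E_2$ and, more delicately, the $p=3$ bookkeeping: one must check case by case that a multiplicative--additive prime $v$ contributes to the final congruence through $\mid S_{E_i}\mid$ when $\mu_p\subset F_v$ and through $\mid T\mid$ when $\mu_p\not\subset F_v$, so that in every case it matches $\sigma^w_{E_1}-\sigma^w_{E_2}$ modulo $2$.
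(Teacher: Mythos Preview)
Your proposal is correct and follows exactly the same approach as the paper: the paper's proof simply says ``Using (\ref{e4.3}) and similar arguments as in Theorem~\ref{t3.2}'' to obtain $\lambda^-_{E_1}+\mid S_{E_1}\mid\equiv\lambda^-_{E_2}+\mid S_{E_2}\mid+\mid T\mid\pmod 2$, and then invokes Corollary~\ref{c4.9}. You have unpacked precisely those ``similar arguments'' (the equality $\lambda^{\Sigma_0,-}_{E_1}=\lambda^{\Sigma_0,-}_{E_2}$, the splitting (\ref{e4.3}), the passage from $\tau$'s to $\sigma$'s via \cite[Lemma 3.2]{Sh}, and the Section~\ref{s2} case analysis), and your added care about transferring assumption~(b) and the $p=3$ bookkeeping is appropriate but introduces nothing new.
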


\begin{proof}

Using (\ref{e4.3}) and similar arguments as in Theorem \ref{t3.2}, we have 
$$ \lambda^{-}_{E_1} + \mid S_{E_1} \mid \equiv \lambda^{-}_{E_2} + \mid S_{E_2} \mid + \mid T \mid (\text{ mod} \enspace 2). $$
{\rd Now} the theorem follows from Corollary \ref{c4.9}.

\end{proof}

{\dl For a slightly different formula (valid only over $ \Q $) comparing $ p $-Selmer rank of supersingular elliptic curves $ E_1 $ and $ E_2 $, see \cite{Ha}. } 

\section{Root numbers} \label{s5}

{\cl Let $ E $ be an elliptic curve defined over a number field $ F $ with good reduction at a prime $ v $ of $ F $ and $ V $ be the $ p $-adic Galois representation attached to $E$.
Let $ \varepsilon(V) $ denote the local epsilon-factor associated to $ V $ at $ v $.
Then from \cite{Do} and \cite[Proposition 2.2.1]{Ne1}, we have $ \varepsilon(V)=\pm 1 $.
Following \cite{Do}, the local root number, $ w(E/F_v) $, is given by
\begin{equation}\label{e5.3}
w(E/F_v):=\dfrac{\varepsilon(V)}{\mid \varepsilon(V) \mid}=\varepsilon(V)=\pm 1.
\end{equation}
\begin{remark}\cite[Section 3.4]{Do}
$ w(E/F_v)=-1 $ when $ v $ is Archimedean or when $ E/F_v $ has split multiplicative reduction, and $ w(E/F_v)=1 $ when $ E/F_v $ has good or non-split multiplicative reduction. 
\end{remark}
\noindent The global root number of an elliptic curve $ E $ defined over $ F $ is the product of the local root numbers over all places of $ F $,
$$ w(E/F) :=  \prod_v w(E/F_v). $$
Thus, if $ E/F $ is semistable, the global root number, $ w(E/F) $, is given by}
\begin{equation}\label{e5.1}
w(E/F):=\prod_{v}w(E/F_v)=(-1)^{\#\{v \, \mid \, \infty \enspace \text{in} \enspace F\} \, + \, \#\{v \enspace \text{split mult. for E\,/\,F} \}} 
\end{equation} 
{\gl {\cl Let $ v \nmid p $. Then we recall } the modified local constant, $ \varepsilon_0(V) $, as defined by Deligne (\cite[(5.1)]{De} and \cite[Page 6]{Ne3}) to be }
\begin{equation}\label{e5.4}
\varepsilon_0(V)=\varepsilon(V) \, \text{det}({\cl -\text{Frob}_{v}} \mid {\cl V^{I_{v}}})
\end{equation}
where the value of $ \varepsilon_0(V) $ mod $p$ depends only on the residual representation $ (T_pE)/p \cong E[p] $ \cite[Theorem 6.5]{De}. 
In the next few lemmas, we compare the global and local root number of two congruent elliptic curves $ E_1 $ and $ E_2 $ with different reduction types at $ v $.

\begin{theorem}\label{t5.1}

Let $ E_1 $ and $ E_2 $ be two semistable elliptic curves defined over $ \Q $ with good reduction at an odd prime $ p $ and $ E_1[p] \cong E_2[p]. $
Then $$ \dfrac{w(E_1/F)}{w(E_2/F)} = (-1)^{\mid S_{E_1} \mid - \mid S_{E_2} \mid}  $$

\end{theorem}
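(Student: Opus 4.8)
The plan is to reduce the global statement to a local comparison of reduction types at the finite primes; the point that makes this possible is that $E_1$ and $E_2$, being semistable over $\Q$, remain semistable over $F$ (good reduction is preserved under base change, and multiplicative reduction can only remain multiplicative). Given this, formula (\ref{e5.1}) expresses $w(E_i/F)$ as $(-1)$ raised to the number of archimedean places of $F$ plus the number of finite primes of $F$ at which $E_i$ has split multiplicative reduction. Hence in the ratio $w(E_1/F)/w(E_2/F)$ the archimedean terms cancel, and we are left with $-1$ raised to the number of finite primes $v$ at which exactly one of $E_1$, $E_2$ has split multiplicative reduction. So the goal is to show that every such prime lies in $\Sigma_0$: granting this, the set of such primes is the symmetric difference $S_{E_1}\triangle S_{E_2}$, and since $|S_{E_1}\triangle S_{E_2}|\equiv|S_{E_1}|-|S_{E_2}|\pmod 2$, the claimed formula follows.

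To establish this, let $v\notin\Sigma_0$ be a finite prime and assume, without loss of generality, that $E_1$ has bad reduction at $v$. Since $E_1$ has good reduction at $p$ we have $v\nmid p$, so the reduction is multiplicative; and since $v\in\Sigma$ while $v\nmid N_1/\overline N_1$, necessarily $v\mid\overline N_1$, i.e.\ $E_1[p]$ is ramified at $v$. By the congruence $E_1[p]\cong E_2[p]$, the module $E_2[p]$ is then ramified at $v$ as well, so $E_2$ does not have good reduction at $v$ and, being semistable, has multiplicative reduction there. It remains to match the split/non-split dichotomy. By (\ref{e2.4}), with $G$ as in Section \ref{s2}, the restriction $\overline\rho_{E_i,p}\mid_{G}$ is conjugate to $\bigl(\begin{smallmatrix}\epsilon_p & \psi_i\\ 0 & 1\end{smallmatrix}\bigr)\otimes\delta_i$ for $i=1,2$, where $\delta_i$ is trivial if $E_i$ is split multiplicative at $v$ and the unramified quadratic character otherwise; as $\epsilon_p$ and $\delta_i$ are unramified at $v\nmid p$ while $E_i[p]$ is ramified at $v$, the entry $\psi_i$ is ramified. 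A short computation then shows that such a module admits a unique one-dimensional $G$-submodule, on which $G$ acts through $\epsilon_p\delta_i$. Comparing these socles, the isomorphism $E_1[p]\mid_{G}\cong E_2[p]\mid_{G}$ forces $\epsilon_p\delta_1=\epsilon_p\delta_2$, hence $\delta_1=\delta_2$: thus $E_1$ and $E_2$ have the same reduction type, and in particular the same local root number, at $v$. Combining with the first paragraph gives $w(E_1/F)/w(E_2/F)=(-1)^{|S_{E_1}|-|S_{E_2}|}$.

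The hard part is precisely this last local step — excluding the case in which one of $E_1,E_2$ is split and the other non-split multiplicative at some $v\notin\Sigma_0$. When $\epsilon_p(\mathrm{Frob}_v)\ne-1$ this follows at once by comparing traces of Frobenius on $E_1[p]$ and $E_2[p]$; but when $\epsilon_p(\mathrm{Frob}_v)=-1$, which can occur for $p=3$, the two traces coincide and one genuinely needs the module-theoretic argument above. This is the same subtle configuration that appears in the proof of Lemma \ref{l2.9}; the extra ingredient here is that the hypothesis $v\notin\Sigma_0$ forces $\psi_i$ to be ramified, which is exactly what makes the one-dimensional submodule unique and $\epsilon_p\delta_i$ a well-defined invariant of $E_i[p]\mid_{G}$.
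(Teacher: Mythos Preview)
Your argument is correct and shares the paper's global strategy --- reduce via (\ref{e5.1}) to showing that the reduction types of $E_1$ and $E_2$ agree at every finite $v\notin\Sigma_0$ --- but the local step is handled differently. The paper invokes \cite[Lemma~2.1]{Sh} (applicable precisely because $v\notin\Sigma_0$) to see that $\hzero(G,E_i[p^\infty])$ is divisible, whence (\ref{e2.2}) gives $\sigma_{E_1}^w=\sigma_{E_2}^w$; it then appeals to the parity computations of Section~\ref{s2} (in effect Lemma~\ref{l2.9}) to rule out a split/non-split mismatch. Your socle argument bypasses the invariants $\sigma^w$ altogether: the hypothesis $v\mid\overline N_i$ forces $\psi_i$ to be ramified, hence a nonzero class in $\hone(G,\F_p(\epsilon_p))$, so $E_i[p]\mid_G$ is a nonsplit extension with a unique one-dimensional submodule $\F_p(\epsilon_p\delta_i)$, and matching these under the isomorphism $E_1[p]\cong E_2[p]$ yields $\delta_1=\delta_2$. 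This is more self-contained and elementary; the paper's route, by contrast, has the virtue of staying within the Iwasawa-theoretic framework of Section~\ref{s2}, which is the paper's organizing theme and feeds directly into Theorem~\ref{t1.4}.
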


\begin{proof}

From (\ref{e5.1}), we have
\begin{equation}\label{e5.2}
\dfrac{w(E_1/F)}{w(E_2/F)} = (-1)^{\# \{v \enspace \text{split mult. for $ E_1 $\,/\,F}\} \,  - \, \# \{v \enspace \text{split mult. for $ E_2 $\,/\,F}\} }. 
\end{equation}
Let $ v \in $ \{Set of primes of split multiplicative reduction of $ E_1/F $\}  $ \setminus S_{E_{1}}. $
Therefore $ v \nmid N_1/\overline{N_1}. $
Since $ E_1 $ is semistable, we have $ (N_1/\overline{N_1}, \overline{N_1}) =1  $ which implies $ v \mid \overline{N_1}. $
{\rd It is clear that} $ \overline{N_1}=\overline{N_2} $ as $ E_1[p] \cong E_2[p] $.
Since $ E_2 $ is also semistable, we have $ v \nmid N_2/\overline{N_2} $ \, i.e. $ v \notin S_{E_{2}}. $

{\rd Note that} $ \hzero(F_{\text{cyc},w}, E_i[p^\infty]), i=1,2 $ is divisible for every prime $ w \mid v $ of $ F_{\text{cyc}} $ \cite[Lemma 2.1]{Sh}.
{\rd It follows from} (\ref{e2.2}) that $ \hone(G, E_i[p]) \cong \hone(G, E_i[p^{\infty}])[p]. $
{\rd Then the isomorphism} $ E_1[p] \cong E_2[p] $ {\rd gives} $ \sigma_{E_1}^{w} =  \sigma_{E_2}^{w}. $
Now from Section \ref{s2}, the parity of local Iwasawa invariants {\cl implies} that $ v $ is also a prime of split multiplicative reduction of $ E_2/F $.
Hence we have $ v \in $ \{Set of primes of split multiplicative reduction of $ E_2/F $\} $ \setminus S_{E_{2}} $ which implies $$ \# \{v \enspace \text{split mult. for} \enspace E_{1}/F \} \, - \mid S_{E_{1}} \mid = \# \{v \enspace \text{split mult. for} \enspace E_2/F\} \, - \mid S_{E_{2}} \mid. $$
Therefore {\rd the equality} in (\ref{e5.2}) reduces to $ \dfrac{w(E_1/F)}{w(E_2/F)} = (-1)^{\mid S_{E_1} \mid - \mid S_{E_2} \mid}.  $

\end{proof}

{\rd Combining} Lemmas \ref{l2.3} and \ref{l2.10}, we are only left with the following two cases.

\begin{lemma}\label{l5.2}

Let $ v \nmid p $ be a finite prime of $ F $.
Let $ E_1 $ and $ E_2 $ be two elliptic curves with additive reduction at $ v $ and $ E_1[p] \cong E_2[p]. $
Then $ w(E_1/F_v)=w(E_2/F_v) $.

\end{lemma}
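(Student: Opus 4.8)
The statement concerns two elliptic curves $E_1, E_2$ over $\Q$ with additive reduction at a finite prime $v \nmid p$ and $E_1[p] \cong E_2[p]$ as $\gal(\overline{\Q}/\Q)$-modules, and asserts $w(E_1/F_v) = w(E_2/F_v)$. The plan is to exploit the modified local constant $\varepsilon_0(V)$ of Deligne, recalled in equation (\ref{e5.4}), whose value mod $p$ depends only on the residual representation $E[p]$ by \cite[Theorem 6.5]{De}. Since $E_1[p] \cong E_2[p]$, we immediately get $\varepsilon_0(V_1) \equiv \varepsilon_0(V_2) \pmod p$, where $V_i$ is the $p$-adic representation attached to $E_i$ at $v$. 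As $p$ is odd and both modified local constants are $\pm 1$, the congruence mod $p$ forces actual equality: $\varepsilon_0(V_1) = \varepsilon_0(V_2)$.

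The next step is to relate $\varepsilon_0(V)$ back to the local root number $w(E/F_v) = \varepsilon(V)$. By the definition (\ref{e5.4}), $\varepsilon_0(V) = \varepsilon(V)\cdot\det(-\mathrm{Frob}_v \mid V^{I_v})$, so I would need to show that the correction factor $\det(-\mathrm{Frob}_v \mid V^{I_v})$ is the same for $E_1$ and $E_2$. Here is where additivity enters: when $E$ has additive reduction at $v$ and $v \nmid p$, the inertia invariants $V^{I_v}$ have dimension at most $1$, and in fact, by a standard analysis of the possible reduction types (potentially good with various inertial types, potentially multiplicative), one can identify $\dim V^{I_v}$ and the Frobenius action on it. I would argue that $V^{I_v}$ is related to $E[p]^{I_v}$ in a controlled way, or more directly that the factor $\det(-\mathrm{Frob}_v \mid V^{I_v})$ is itself determined mod $p$ by $E[p]$ — for instance, $\dim V^{I_v} = \dim_{\F_p} E[p]^{I_v}$ when $E[p]^{I_v}$ behaves well, and on this line the Frobenius eigenvalue is a root of unity (additive potentially good case) or $\pm 1$ times a power of the residue characteristic (additive potentially multiplicative case) whose sign is read off from $E[p]$. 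Combining $\varepsilon_0(V_1) = \varepsilon_0(V_2)$ with equality of the correction factors yields $\varepsilon(V_1) = \varepsilon(V_2)$, i.e. $w(E_1/F_v) = w(E_2/F_v)$.

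The main obstacle I anticipate is the careful bookkeeping of the correction factor $\det(-\mathrm{Frob}_v \mid V^{I_v})$ across all reduction types that can occur for an additive-reduction curve: potentially good reduction (where $V^{I_v}$ may be $0$, so the factor is trivially $1$ for both curves and the argument is immediate), versus potentially multiplicative reduction (where $\dim V^{I_v} = 1$ and one must pin down the unramified character through which $\mathrm{Frob}_v$ acts). In the potentially good case the congruence $E_1[p]\cong E_2[p]$ combined with the Néron–Ogg–Shafarevich criterion shows both curves acquire good reduction over the same extension type, but I would want to be slightly careful when $p$ is small (e.g. $p=3$), since wild ramification could in principle interfere; however, the hypothesis $v\nmid p$ rules out wild ramification in the residual representation away from $p$, so $E[p]^{I_v}$ genuinely controls the tame inertia action. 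A clean alternative, if the direct computation proves cumbersome, is to invoke \cite[Theorem 2.17]{Ne3} of Nekovar (quoted in the introduction) after verifying the symplectic-isomorphism hypothesis — but since we want an independent proof, I would push the Deligne $\varepsilon_0$ argument through, isolating the potentially multiplicative subcase as the only place requiring real work.
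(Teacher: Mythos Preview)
Your approach via Deligne's modified local constant $\varepsilon_0$ is exactly the paper's, but you are missing the one observation that makes the argument immediate: for an elliptic curve with \emph{additive} reduction at $v\nmid p$, one always has $V^{I_v}=0$ (the paper cites \cite[Theorem~4.10.2]{Si1}). Consequently $\det(-\mathrm{Frob}_v\mid V^{I_v})=1$ trivially for both curves, so $\varepsilon_0(V_i)=\varepsilon(V_i)$, and the congruence $\varepsilon_0(V_1)\equiv\varepsilon_0(V_2)\pmod p$ with both sides in $\{\pm1\}$ and $p$ odd finishes the proof.

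Your claim that $\dim V^{I_v}=1$ in the additive potentially multiplicative case is wrong, and this is what leads you into the unnecessary case analysis. In that case $E$ is the twist of a Tate curve by a \emph{ramified} quadratic character $\chi$, so restricted to inertia the representation has the shape $\left(\begin{smallmatrix}\chi|_{I_v}&*\\0&\chi|_{I_v}\end{smallmatrix}\right)$ (recall $\omega_p$ is unramified at $v\nmid p$); since $\chi|_{I_v}\neq 1$ there are no nonzero inertia invariants. More conceptually: if $\dim V^{I_v}\geq 1$, then because $\det V=\omega_p$ is unramified at $v$ the inertia action would be unipotent, forcing semistable reduction and contradicting additivity. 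So there is no ``potentially multiplicative subcase requiring real work'' --- once you note $V_i^{I_v}=0$, the paper's three-line proof is all that is needed.
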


\begin{proof}

{\bl  Let $ V_i $ be the Galois representation corresponding to $E_i, i=1,2$.
Since $ E_1[p] \cong E_2[p] $ and the value of $ \varepsilon_0(V_i) $ mod $p$ depends only on the residual representation $ E_i[p] $, we have $ \varepsilon_0(V_1) \equiv \varepsilon_0(V_2) \, \text{mod} \, p. $ }
{\gl Also $ E_i $ have additive reduction at $ v $ implies $ V_i^{{\cl I_v}}=0 $ \cite[Theorem 4.10.2]{Si1}.}
Therefore $ \varepsilon_0(V_i)=\varepsilon(V_i), i=1,2. $
Finally (\ref{e5.3}), (\ref{e5.4}) and $ \varepsilon_0(V_1) \equiv \varepsilon_0(V_2) \, \text{mod} \, p $ together imply $ w(E_1/F_v)=w(E_2/F_v) $.

\end{proof}

\begin{lemma}\label{l5.3}
Let $ p =3 $, $ v \nmid p $ be a finite prime of $ F $. 
Suppose $ E_1 $ and $ E_2 $ are two elliptic curves such that $ E_1 $ has multiplicative reduction at $ v, E_2 $ has additive reduction at $ v $ and $ E_1[p] \cong E_2[p] $.
Then $ w(E_2/F_v)=1 $ if $ \mu_p \subset F_v $ otherwise $ w(E_2/F_v)=-1. $
\end{lemma}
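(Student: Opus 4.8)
The plan is to follow the argument of Lemma \ref{l5.2} via the modified local constant $\varepsilon_0$ of (\ref{e5.4}); the new feature is that $E_1$ and $E_2$ have different reduction types at $v$, so the correction factor $\det(-\text{Frob}_v\mid V^{I_v})$ is no longer trivial on both sides. Write $V_i$ for the $p$-adic Galois representation of $E_i$. Since $E_1[p]\cong E_2[p]$ and, by \cite[Theorem 6.5]{De}, the residue of $\varepsilon_0(V_i)$ modulo $p$ depends only on $E_i[p]$, we have $\varepsilon_0(V_1)\equiv\varepsilon_0(V_2)\pmod p$. As $E_2$ has additive reduction at $v$ we have $V_2^{I_v}=0$ \cite[Theorem 4.10.2]{Si1}, so by (\ref{e5.3}) and (\ref{e5.4}) we get $\varepsilon_0(V_2)=\varepsilon(V_2)=w(E_2/F_v)\in\{\pm1\}$, exactly as in Lemma \ref{l5.2}. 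Thus it remains to compute $\varepsilon_0(V_1)$ modulo $p=3$.

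Next I would analyse $V_1$ at $v$. Because $E_1$ has multiplicative reduction at $v$ and $v\nmid p$, the representation $V_1|_{G_v}$ is a ramified extension $0\to\Q_p(1)\otimes\delta\to V_1\to\Q_p\otimes\delta\to 0$, where $\delta$ is trivial if $E_1$ is split and is the unramified quadratic character of $G_v$ otherwise, and the extension class is non-trivial on $I_v$ (this is \cite[Proposition 2.12]{DDT}, see also the representation (\ref{e2.3})). Consequently $V_1^{I_v}=\Q_p(1)\otimes\delta$ is one-dimensional, and $\text{Frob}_v$ acts on it by $\omega_p(\text{Frob}_v)\delta(\text{Frob}_v)$, where $\omega_p$ is the $p$-adic cyclotomic character. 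Moreover $\varepsilon(V_1)=w(E_1/F_v)=-\delta(\text{Frob}_v)$, since the local root number of a curve with multiplicative reduction equals $-1$ in the split case and $+1$ in the non-split case. Substituting into (\ref{e5.4}),
\begin{equation*}
\varepsilon_0(V_1)=\varepsilon(V_1)\,\det\bigl(-\text{Frob}_v\mid V_1^{I_v}\bigr)=\bigl(-\delta(\text{Frob}_v)\bigr)\bigl(-\omega_p(\text{Frob}_v)\delta(\text{Frob}_v)\bigr)=\omega_p(\text{Frob}_v),
\end{equation*}
the two factors $\delta(\text{Frob}_v)$ cancelling since $\delta(\text{Frob}_v)^2=1$; note that the value is insensitive modulo $3$ to whether $\omega_p$ is normalised by arithmetic or geometric Frobenius.

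Finally I would put the pieces together. Reducing modulo $3$, $\omega_p(\text{Frob}_v)\equiv\epsilon_p(\text{Frob}_v)\pmod 3$, where $\epsilon_p$ denotes the mod $p$ cyclotomic character; since $v\nmid p$ this character is unramified, so $\epsilon_p(\text{Frob}_v)$ makes sense, it lies in $\F_3^\times=\{\pm1\}$, and $\epsilon_p|_{G_v}$ is trivial exactly when $\mu_p\subset F_v$. Hence $w(E_2/F_v)=\varepsilon_0(V_2)\equiv\varepsilon_0(V_1)\equiv\epsilon_p(\text{Frob}_v)\pmod 3$, and since both sides already lie in $\{\pm1\}$ this congruence is an equality: $w(E_2/F_v)=1$ if $\mu_p\subset F_v$ and $w(E_2/F_v)=-1$ otherwise.

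I expect the main obstacle to be the precise evaluation of $\varepsilon_0(V_1)$ in the multiplicative case — correctly pinning down $V_1^{I_v}$ as the unramified twist of $\Q_p(1)$ (via the $\ell$-adic monodromy of the Tate parametrisation, cf. \cite{Si}, \cite{Si1}) together with the normalisations of the local root number $\varepsilon(V_1)$ and of $\det(-\text{Frob}_v\mid V_1^{I_v})$, so that the two contributions of $\delta(\text{Frob}_v)$ cancel exactly. Once this computation is secured, the remaining ingredients — invariance of $\varepsilon_0$ modulo $p$, the vanishing $V_2^{I_v}=0$, and the identification of triviality of $\epsilon_p|_{G_v}$ with $\mu_p\subset F_v$ — are routine and mirror the proof of Lemma \ref{l5.2}.
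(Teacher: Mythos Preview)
Your proposal is correct and follows essentially the same route as the paper's proof: both compute $\varepsilon_0(V_1)$ via (\ref{e5.4}) using the Tate-parametrisation description of $V_1|_{G_v}$, invoke $V_2^{I_v}=0$ to get $\varepsilon_0(V_2)=w(E_2/F_v)$, and conclude from Deligne's theorem that $\varepsilon_0$ modulo $p$ depends only on $E_i[p]$. The only difference is cosmetic --- you handle the split and non-split cases uniformly through $\delta$ so that the two factors $\delta(\text{Frob}_v)$ visibly cancel, whereas the paper treats the two cases separately; this is a mild streamlining but the argument is the same.
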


\begin{proof}

Let $ \mu_p \subset F_v $ which implies $ \epsilon_p $ is trivial.
Consider $ E_1 $ has split multiplicative reduction at $ v $.
From the representation (\ref{e2.3}), we have $ V_1^{{\cl I_v}}=\Q_p(\epsilon_p) $ {\bl where $ V_i $ is the Galois representation corresponding to $E_i, i=1,2$.}
Therefore $ \text{det}(-\text{Frob}_v \mid V_1^{{\cl I_v}})=-\epsilon_p(\text{Frob}_v)\equiv -1 $ mod $ p $.
Next, let $ E_1 $ has non-split multiplicative reduction at $ v $.
From the representation (\ref{e2.3}), we have $ V_1^{{\cl I_v}}=\Q_p(\epsilon_p \otimes \delta). $
Therefore $ \text{det}(-\text{Frob}_v \mid V_1^{{\cl I_v}})=-\epsilon_p(\text{Frob}_v) \, \delta(\text{Frob}_v) \equiv 1 $ mod $ p $, since $ \delta $ is non-trivial.

{\bl Since $ E_1[p] \cong E_2[p] $ and the value of $ \varepsilon_0(V_i) $ mod $p$ depends only on the residual representation $ E_i[p] $, we have $ \varepsilon_0(V_1) \equiv \varepsilon_0(V_2) \, \text{mod} \, p. $
Now $ E_2 $ have additive reduction implies $ V_2^{{\cl I_v}}=0 $.
Therefore $ \varepsilon_0(V_2)=\varepsilon(V_2). $
Finally from (\ref{e5.3}), (\ref{e5.4}) and the fact that $ \varepsilon_0(V_1) \equiv \varepsilon_0(V_2) \, \text{mod} \, p $, we have $ w(E_2/F_v)=1 $.}

Suppose $ \mu_p \nsubseteq F_v $.
Then $ \epsilon_p $ is non-trivial and $ \epsilon_p(\text{Frob}_v) \equiv -1 $ mod $ p. $
Now by similar arguments as above, we have $ w(E_2/F_v)=-1. $

\end{proof}

\begin{theorem}

Let $ E_1 $ and $ E_2 $ be two elliptic curves defined over $ \Q $ with good reduction at an odd prime $ p $ and $ E_1[p] \cong E_2[p]. $
Then \begin{equation} \label{e5.6}
\dfrac{w(E_1/F)}{w(E_2/F)} = (-1)^{\mid S_{E_1} \mid - \mid S_{E_2} \mid + \mid T \mid}. 
\end{equation}

\end{theorem}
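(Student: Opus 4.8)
The plan is to compare the two global root numbers place by place, via the factorisation $w(E_i/F)=\prod_v w(E_i/F_v)$. Write $a_i(v)=1$ if $v\in S_{E_i}$ and $a_i(v)=0$ otherwise, and $b(v)=1$ if $v\in T$ and $b(v)=0$ otherwise. It suffices to prove the local identity
\begin{equation*}
\frac{w(E_1/F_v)}{w(E_2/F_v)}=(-1)^{a_1(v)-a_2(v)+b(v)}
\end{equation*}
for every place $v$ of $F$, since multiplying over all $v$ then yields $(\ref{e5.6})$, the sums $\sum_v a_i(v)$ and $\sum_v b(v)$ being $\mid S_{E_i}\mid$ and $\mid T\mid$. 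For $v$ archimedean both sides are $-1$, and for $v\mid p$ both $E_1,E_2$ have good reduction so both local root numbers are $1$; in either case $v$ lies in none of $S_{E_1},S_{E_2},T$ and there is nothing to show. So I fix a finite prime $v\nmid p$ and run through the possible pairs of reduction types of $E_1,E_2$ at $v$, invoking Lemmas $\ref{l2.3}$ and $\ref{l2.10}$ of Section $\ref{s2}$ together with Lemmas $\ref{l5.2}$, $\ref{l5.3}$ and the Remark after $(\ref{e5.3})$.

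If $E_1$ and $E_2$ have the same reduction type at $v$ (good/good, additive/additive, split/split or non-split/non-split multiplicative), then $w(E_1/F_v)=w(E_2/F_v)$ — by the Remark after $(\ref{e5.3})$ in the good and multiplicative cases, and by Lemma $\ref{l5.2}$ in the additive case — while $a_1(v)=a_2(v)$ and $b(v)=0$, so the identity holds. The pair (good, additive) cannot occur, by Lemma $\ref{l2.3}$, and (multiplicative, additive) cannot occur for $p\ge 5$, by Lemma $\ref{l2.10}$. The genuinely mixed cases left to treat are thus: one curve good and the other multiplicative; both multiplicative but of opposite split/non-split type; and, only when $p=3$, one multiplicative and the other additive.

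In each mixed case the first step is to show that $v\in\Sigma_0$, using $\overline{N_1}=\overline{N_2}$ (valid since $E_1[p]\cong E_2[p]$). If $E_1$ has good and $E_2$ bad reduction at $v$ — necessarily multiplicative, by Lemma $\ref{l2.3}$ — then $E_2[p]\cong E_1[p]$ is unramified at $v$, so $v\nmid\overline{N_2}$ while $v\mid N_2$, forcing $v\mid N_2/\overline{N_2}$; here $w(E_1/F_v)=1$ and $w(E_2/F_v)=-1$ exactly when $E_2$ is split multiplicative, i.e. exactly when $v\in S_{E_2}$, matching $a_1(v)-a_2(v)+b(v)=-a_2(v)$. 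If $E_1$ is split and $E_2$ non-split multiplicative at $v$, then $w(E_1/F_v)/w(E_2/F_v)=-1$; here the congruence together with the opposite reduction types forces $\epsilon_p=\delta$ (as in the proof of Lemma $\ref{l2.9}$), so if $v\notin\Sigma_0$ then $v$ divides $\overline{N_1}$ and $\overline{N_2}$, making $E_1[p]$ and $E_2[p]$ ramified of multiplicative type at $v$, and their $G_v$-socles — the character $\epsilon_p$ for $E_1$ against the trivial character for $E_2$, read off from $(\ref{e2.4})$ — are non-isomorphic, contradicting $E_1[p]\cong E_2[p]$; hence $v\in\Sigma_0$, so $v\in S_{E_1}\setminus S_{E_2}$, $v\notin T$, and $a_1(v)-a_2(v)+b(v)=1$. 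Finally, when $p=3$, $E_1$ multiplicative and $E_2$ additive at $v$, the conductor exponent of $E_2$ at $v$ is $2$ while that of $E_2[p]\cong E_1[p]$ is at most $1$, so $v\mid N_2/\overline{N_2}$ and again $v\in\Sigma_0$; then $v\in S_{E_1}$ iff $E_1$ is split multiplicative, $v\notin S_{E_2}$, and $v\in T$ iff $\mu_p\not\subset F_v$, and combining $w(E_1/F_v)$ (Remark after $(\ref{e5.3})$) with $w(E_2/F_v)$ (Lemma $\ref{l5.3}$) verifies the local identity in each of the four sub-cases. The symmetric configurations, with the roles of $E_1$ and $E_2$ exchanged, are identical; and at $v\notin\Sigma_0$ only the equal-reduction-type case survives, so the identity reduces once more to $w(E_1/F_v)=w(E_2/F_v)$.

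The step I expect to be the main obstacle is showing that $v\in\Sigma_0$ whenever the two reduction types genuinely differ — above all in the split-vs-non-split case, where the contradiction must be extracted from $E_1[p]\cong E_2[p]$ by a careful analysis of the ramification and $G_v$-socles of the two multiplicative-type residual representations $(\ref{e2.4})$. The rest is bookkeeping: matching the explicit local root number values of Section $\ref{s5}$ against membership of $v$ in $S_{E_1}$, $S_{E_2}$ and $T$, the contribution of $T$ coming exactly from the local root number of the additively reduced curve being $-1$ when $\mu_p\not\subset F_v$.
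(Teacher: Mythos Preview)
Your place-by-place argument is correct and establishes the theorem. The overall structure is the same as the paper's --- reduce to comparing $w(E_1/F_v)$ with $w(E_2/F_v)$ and invoke Lemmas \ref{l2.3}, \ref{l2.10}, \ref{l5.2}, \ref{l5.3} for the cases involving additive reduction --- but you diverge from the paper at the key semistable step: showing that a prime $v$ where $E_1$ and $E_2$ have genuinely different semistable reduction types (good versus multiplicative, or split versus non-split) must lie in $\Sigma_0$.

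The paper proves this inside Theorem \ref{t5.1} by an Iwasawa-theoretic detour: for $v\notin\Sigma_0$ it cites \cite[Lemma 2.1]{Sh} to get divisibility of $\hzero(G,E_i[p^\infty])$, deduces $\sigma_{E_1}^w=\sigma_{E_2}^w$ from \eqref{e2.2}, and then appeals to the parity lemmas of Section \ref{s2} (Lemmas \ref{l2.9}, \ref{l2.11}) to rule out mismatched reduction types. Your argument is more elementary and purely representation-theoretic: in the good/multiplicative case you use that $E_2[p]\cong E_1[p]$ is unramified to force $v\mid N_2/\overline{N_2}$ directly, and in the split/non-split case you extract $\epsilon_p=\delta$ from the proof of Lemma \ref{l2.9} and then observe that if both $E_i[p]$ were ramified the unique $G_v$-stable lines would carry the distinct characters $\epsilon_p$ and $\epsilon_p\delta=1$, contradicting $E_1[p]\cong E_2[p]$. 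This avoids any reference to the local Iwasawa invariants $\sigma_{E_i}^w$ and to \cite{Sh}, which is a genuine simplification for this theorem taken in isolation; on the other hand, the paper's route reuses machinery already built for Theorems \ref{t1.1} and \ref{t1.2}, so costs nothing extra in context. One small correction: in the $p=3$ multiplicative/additive case the conductor exponent of $E_2$ at $v$ is $\geq 2$, not necessarily equal to $2$, but this does not affect your conclusion that $v\mid N_2/\overline{N_2}$.
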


\begin{proof}

When $ E_1 $ and $ E_2 $ are two semistable elliptic curves, we have already proved in Theorem \ref{t5.1} that (\ref{e5.6}) holds. 
Next assume $ E_1 $ and $ E_2 $ have additive reduction at $ v $.
Using Lemma \ref{l5.2}, we have $ w(E_1/F_v) = w(E_2/F_v). $
Also Lemma \ref{l2.3} implies that it is not possible to have a prime $ v $ such that $ E_1 $ has good reduction and $ E_2 $ has additive reduction at $ v $.
 
Let $ p \geq 5 $.
By Lemma \ref{l2.10}, it is not possible to have a prime $ v $ such that $ E_1 $ has multiplicative reduction and $ E_2 $ has additive reduction at $ v $. 
Hence assume $ p=3 $, $ E_1 $ has multiplicative reduction and $ E_2 $ has additive reduction at $ v $.
Then $ w(E_1/F_v)=-1 $ if $ E_1 $ has split multiplicative reduction at $ v $ otherwise $ w(E_1/F_v)=1. $
From Lemma \ref{l5.3}, we have $ w(E_2/F_v)=1 $ if $ \mu_p \subset F_v $ else $ w(E_2/F_v)=-1. $
Therefore if $ E_1 $ has split (respectively non-split) multiplicative reduction and $ E_2 $ has additive reduction at $ v,$ then $ w(E_1/F_v)/w(E_2/F_v) = -1 $ (respectively 1) if $ \mu_p \subset F_v $ otherwise $ w(E_1/F_v)/w(E_2/F_v) = 1 $ (respectively -1). 

From the above comparisons of $ w(E_i/F_v), i=1,2 $, we have $$ \dfrac{w(E_1/F)}{w(E_2/F)} = (-1)^{\mid S_{E_1} \mid - \mid S_{E_2} \mid + \mid T \mid}.  $$

\end{proof}

{\bl \begin{corollary}
Let $E_1$ and $E_2$ be two elliptic curves defined over $\Q$ with good reduction at $p$ and $E_1[p]\cong E_2[p]$.
Then under the assumptions of Theorem \ref{t3.2} ($ E_i $ have ordinary reduction at $ p $) or Theorem \ref{t4.11} ($ E_i $ have supersingular reduction at $ p $), we have $$ \dfrac{w(E_1/F)}{w(E_2/F)}= \dfrac{(-1)^{s_p(E_1/F)}}{(-1)^{s_p(E_2/F)}} $$
\end{corollary}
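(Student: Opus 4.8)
The plan is to read off the corollary directly from the two main arithmetic theorems of the previous sections together with the root number comparison. First I would invoke the appropriate Selmer parity statement: under the ordinary hypotheses this is Theorem~\ref{t3.2}, and under the supersingular hypotheses (with $p$ split completely over $F$) it is Theorem~\ref{t4.11}. In either case one obtains
$$ s_p(E_1/F) + \mid S_{E_1} \mid \equiv s_p(E_2/F) + \mid S_{E_2} \mid + \mid T \mid \enspace (\text{ mod} \enspace 2). $$
Rearranging the congruence gives
$$ s_p(E_1/F) - s_p(E_2/F) \equiv \mid S_{E_1} \mid - \mid S_{E_2} \mid + \mid T \mid \enspace (\text{ mod} \enspace 2), $$
and hence $(-1)^{s_p(E_1/F) - s_p(E_2/F)} = (-1)^{\mid S_{E_1} \mid - \mid S_{E_2} \mid + \mid T \mid}$.

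The second step is to feed this into the root number comparison~(\ref{e5.6}), which holds unconditionally for any pair of elliptic curves over $\Q$ with good reduction at $p$ and $E_1[p]\cong E_2[p]$, hence in particular under the hypotheses of Theorem~\ref{t3.2} or Theorem~\ref{t4.11}. Combining it with the previous display yields
$$ \dfrac{w(E_1/F)}{w(E_2/F)} = (-1)^{\mid S_{E_1} \mid - \mid S_{E_2} \mid + \mid T \mid} = (-1)^{s_p(E_1/F) - s_p(E_2/F)} = \dfrac{(-1)^{s_p(E_1/F)}}{(-1)^{s_p(E_2/F)}}, $$
which is exactly the claimed identity. The whole argument is two or three lines of bookkeeping, so there is no real obstacle; the only point to be attentive to is simply that the hypotheses quoted in the corollary are literally those of Theorems~\ref{t3.2} and~\ref{t4.11}, and that~(\ref{e5.6}) needs no extra assumptions beyond $E_1[p]\cong E_2[p]$ and good reduction at $p$.

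In the write-up I would also add one sentence of context: this corollary says that the $p$-parity conjecture $w(E/F)=(-1)^{s_p(E/F)}$ is compatible with congruences within the class of curves considered here, in the sense that if it holds for one member of a congruence class then it holds for every member — yet the proof of the displayed equality itself does not use the $p$-parity conjecture, only the unconditional Theorems~\ref{t3.2}, \ref{t4.11} and~(\ref{e5.6}).
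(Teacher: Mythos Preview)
Your proposal is correct and follows exactly the intended argument: the corollary is an immediate combination of Theorem~\ref{t3.2} (or Theorem~\ref{t4.11}) with the root number comparison~(\ref{e5.6}), and the paper leaves it without an explicit proof for precisely this reason. Your added contextual sentence about compatibility with the $p$-parity conjecture also matches the remark the paper makes immediately after the corollary.
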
 }

{\rd The above corollary implies that if the $p$-parity conjecture is true for one of the congruent elliptic curves then it is also true for the other elliptic curve. }

\begin{theorem}

Let $ E_1 $ and $ E_2 $ be two elliptic curves defined over $ \Q $ with good reduction at an odd prime $ p $ and $ E_1[p] \cong E_2[p]. $
Let $ v \nmid p $ be a finite prime of $ F $ and $ w \mid v $ be a prime of $ F_{cyc} $.
Then \begin{equation} \label{e5.5} 
\dfrac{w(E_1/F_v)}{w(E_2/F_v)}=(-1)^{\sigma_{E_1}^{w} - \sigma_{E_2}^{w}}.
\end{equation}

\end{theorem}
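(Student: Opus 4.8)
The plan is to run a case analysis on the pair of reduction types of $E_1$ and $E_2$ at $v$, recycling the local computations of Sections \ref{s2} and \ref{s5}. The inputs on the root number side are: the Remark following \eqref{e5.3}, which gives $w(E/F_v) = -1$ exactly when $E/F_v$ has split multiplicative reduction and $w(E/F_v) = 1$ when $E$ has good or non-split multiplicative reduction; and, for additive reduction, Lemmas \ref{l5.2} and \ref{l5.3}, which pin down $w(E/F_v)$ via the modified local constant $\varepsilon_0$. On the $\sigma$-invariant side the inputs are Lemmas \ref{l2.1}, \ref{l2.2}, \ref{l2.7}, \ref{l2.8} for matched reduction types and Lemmas \ref{l2.9}, \ref{l2.11}, \ref{l2.12} for the mixed ones. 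Since $E_1[p] \cong E_2[p]$, Lemma \ref{l2.3} eliminates the (good, additive) configuration and Lemma \ref{l2.10} eliminates the (multiplicative, additive) configuration whenever $p \geq 5$, so only finitely many configurations survive.

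First I would dispose of the configurations in which $E_1$ and $E_2$ have the same reduction type at $v$; in each of these both sides of \eqref{e5.5} equal $1$. If both curves are good, $w(E_i/F_v) = 1$ and $\sigma_{E_1}^{w} = \sigma_{E_2}^{w}$ by Lemma \ref{l2.1}. If both are additive, $w(E_1/F_v) = w(E_2/F_v)$ by Lemma \ref{l5.2} and $\sigma_{E_i}^{w} = 0$ by Lemma \ref{l2.2}. If both are split multiplicative, or both non-split multiplicative, the root numbers coincide by the Remark and $\sigma_{E_1}^{w} = \sigma_{E_2}^{w}$ by Lemma \ref{l2.7} or Lemma \ref{l2.8}, the point being that the invariant there is read off from $\epsilon_p$ and $\delta$, which depend only on $F_v$ and not on the chosen curve.

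Next I would treat the mixed configurations that occur for all odd $p$. For (good, split multiplicative) the ratio $w(E_1/F_v)/w(E_2/F_v)$ is $-1$, matching $(-1)^{\sigma_{E_1}^{w} - \sigma_{E_2}^{w}} = -1$ from Lemma \ref{l2.11}; for (split multiplicative, non-split multiplicative) the ratio is again $-1$, matched by Lemma \ref{l2.9}; for (good, non-split multiplicative) the ratio is $1$, matched by $\sigma_{E_1}^{w} = \sigma_{E_2}^{w}$ of Lemma \ref{l2.12}. The only remaining configuration is $p = 3$ with one curve multiplicative and the other additive. Here I would combine the explicit value of $w(E_2/F_v)$ from Lemma \ref{l5.3} (which splits into the cases $\mu_p \subset F_v$ and $\mu_p \not\subset F_v$) with $w(E_1/F_v)$ from the Remark and with $\sigma_{E_1}^{w}$ from Lemmas \ref{l2.7} and \ref{l2.8}, using that for $v \nmid p$ the character $\epsilon_p$ restricted to $G_v$ is unramified, so for $p = 3$ it is trivial exactly when $\mu_p \subset F_v$ and otherwise equals the unramified quadratic character $\delta$; a four-way check (split or non-split, times $\mu_p \subset F_v$ or not) then shows both sides of \eqref{e5.5} agree in every case.

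The only real work is the bookkeeping in this last $p = 3$ additive case: one must keep straight the equivalences among $\mu_p \subset F_v$, triviality of $\epsilon_p|_{G_v}$, and the equality $\epsilon_p|_{G_v} = \delta$, and translate the $\epsilon_p$/$\delta$-phrased statements of Lemmas \ref{l2.7}, \ref{l2.8}, \ref{l5.3} into the dichotomy $\mu_p \subset F_v$ versus $\mu_p \not\subset F_v$. Beyond this there is no new ingredient; the proof is a finite verification that the root-number table furnished by the Remark, Lemma \ref{l5.2} and Lemma \ref{l5.3} is consistent, entry by entry, with the list of $\sigma$-invariant identities established in Section \ref{s2}.
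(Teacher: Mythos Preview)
Your proposal is correct and follows the same case-by-case strategy as the paper, drawing on the same inputs: the Remark after \eqref{e5.3} for semistable root numbers, Lemmas \ref{l5.2} and \ref{l5.3} for additive root numbers, and the Section \ref{s2} lemmas for the $\sigma$-invariants. In fact your write-up is more complete than the paper's own proof: the paper only explicitly treats the cases (good, good), (additive, additive), and (multiplicative, additive) with $p=3$, invoking the impossibility Lemmas \ref{l2.3} and \ref{l2.10} for the rest, but never spells out the mixed semistable configurations (good, split), (good, non-split), (split, non-split) or the matched multiplicative ones. You correctly route these through Lemmas \ref{l2.9}, \ref{l2.11}, \ref{l2.12} together with the Remark, which is exactly what is needed; the paper presumably regards these as implicit but does not say so.
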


\begin{proof}

Let $ E_1 $ and $ E_2 $ have good reduction at $ v $.
Then from Lemma \ref{l2.1}, we have $ \sigma_{E_1}^{w}= \sigma_{E_2}^{w}. $
Also $ w(E_1/F_v)=w(E_2/F_v)=1. $
Hence (\ref{e5.5}) holds.
Next assume $ E_1 $ and $ E_2 $ have additive reduction at $ v $.
From Lemmas \ref{l2.2} and \ref{l5.2}, we have $ \sigma_{E_i}^{w}=0, i=1,2 $ and $ w(E_1/F_v)=w(E_2/F_v) $ respectively.
Therefore \ref{e5.5} holds. 
Lemma \ref{l2.3} implies that it is not possible to have a prime $ v $ such that $ E_1 $ has good reduction and $ E_2 $ has additive reduction at $ v $.

Let $ p \geq 5 $.
By Lemma \ref{l2.10}, it is not possible to have a prime $ v $ such that $ E_1 $ has multiplicative reduction and $ E_2 $ has additive reduction at $ v $. 
Hence assume $ p=3, E_1 $ has multiplicative reduction and $ E_2 $ has additive reduction at $ v. $
Now $ \sigma_{E_2}^{w} = 0 $ follows from Lemma \ref{l2.2}.
Let $ \mu_p \subset F_v. $
Then $ \epsilon_p=1 $ which implies $ \sigma_{E_1}^{w}=1 $ if $ E_1 $ has split multiplicative reduction and $ \sigma_{E_1}^{w}=0 $ if $ E_1 $ has non-split multiplicative reduction at $ v $ (see Lemmas \ref{l2.7} and \ref{l2.8} ).
Therefore $ \sigma_{E_1}^{w} - \sigma_{E_2}^{w}=1 $
 if $ E_1 $ has split multiplicative reduction and $ \sigma_{E_1}^{w} - \sigma_{E_2}^{w}=0 $ if $ E_1 $ has non-split multiplicative reduction at $ v $.
 From Lemma \ref{l5.3}, we have $ w(E_2/F_v)=1. $
 Hence (\ref{e5.5}) holds.
 
Let $ \mu_p \nsubseteq F_v $.
Since $ p=3 $ and there exists a unique unramified quadratic character, we have $ \epsilon_p=\delta $ which implies $ \sigma_{E_1}^{w}=1 $ if $ E_1 $ has non-split multiplicative reduction at $ v $ (see Lemma \ref{l2.8}).
Also from Lemma \ref{l5.3}, we have $ w(E_2/F_v)=-1. $
Hence (\ref{e5.5}) holds in this case also.
 
\end{proof}

\begin{corollary}

Let $ E_1 $ and $ E_2 $ be two elliptic curves defined over $ \Q $ with good reduction at an odd prime $ p $ and $ E_1[p] \cong E_2[p]. $
Let $ v \nmid p $ be a finite prime of $ F $ and $ w \mid v $ be a prime of $ F_{cyc} $.
Then $$ \dfrac{w(E_1/F_v)}{w(E_2/F_v)}=(-1)^{\tau_{E_1}^{v} - \tau_{E_2}^{v}}. $$

\end{corollary}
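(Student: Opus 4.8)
The plan is to deduce the corollary directly from the preceding theorem, which already supplies $w(E_1/F_v)/w(E_2/F_v)=(-1)^{\sigma_{E_1}^{w}-\sigma_{E_2}^{w}}$ for every prime $w\mid v$ of $F_{cyc}$; what remains is the congruence $\tau_{E_1}^{v}-\tau_{E_2}^{v}\equiv \sigma_{E_1}^{w}-\sigma_{E_2}^{w}\pmod 2$.

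First I would unwind the definition of $\tau_E^{v}$. Since $\h_v(E/F_v)=\prod_{w\mid v}\hone(F_{cyc,w},E[p^\infty])$ and there are only finitely many primes $w\mid v$ (as recalled in Section \ref{s1}), dualizing gives $\widehat{\h_v(E/F_v)}=\prod_{w\mid v}\widehat{\hone(F_{cyc,w},E[p^\infty])}$, so $\tau_E^{v}=\sum_{w\mid v}\sigma_E^{w}$ and hence $\tau_{E_1}^{v}-\tau_{E_2}^{v}=\sum_{w\mid v}\bigl(\sigma_{E_1}^{w}-\sigma_{E_2}^{w}\bigr)$. By the preceding theorem the quantity $(-1)^{\sigma_{E_1}^{w}-\sigma_{E_2}^{w}}$ equals $w(E_1/F_v)/w(E_2/F_v)$, which does not involve $w$; thus all the summands $\sigma_{E_1}^{w}-\sigma_{E_2}^{w}$ have one and the same parity, say $\delta$, and $\tau_{E_1}^{v}-\tau_{E_2}^{v}\equiv g_v\,\delta\pmod 2$, where $g_v$ is the number of primes of $F_{cyc}$ above $v$.

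It then suffices to note that $g_v$ is odd. Since $v\nmid p$, the prime $v$ is unramified in the cyclotomic $\Z_p$-extension $F_{cyc}/F$, so $g_v=[\Gamma:D_v]$ with $D_v\subseteq\Gamma\cong\Z_p$ the closed subgroup topologically generated by $\text{Frob}_v$; as $g_v$ is finite we have $D_v=\Gamma^{p^{n}}$ for some $n\ge 0$, whence $g_v=p^{n}$ is a power of the odd prime $p$. Therefore $\tau_{E_1}^{v}-\tau_{E_2}^{v}\equiv g_v\,\delta\equiv\delta\equiv \sigma_{E_1}^{w}-\sigma_{E_2}^{w}\pmod 2$, and combining with the preceding theorem yields
$$\frac{w(E_1/F_v)}{w(E_2/F_v)}=(-1)^{\sigma_{E_1}^{w}-\sigma_{E_2}^{w}}=(-1)^{\tau_{E_1}^{v}-\tau_{E_2}^{v}}.$$
I do not anticipate a genuine obstacle here: the corollary is a bookkeeping consequence of equation (\ref{e5.5}), the only mildly delicate ingredient being the finiteness — hence oddness — of the splitting number $g_v$, which in turn rests on the elementary fact that $v$ cannot split completely in $F_{cyc}/F$ because its absolute norm exceeds $1$.
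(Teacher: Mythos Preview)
Your argument is correct and follows essentially the same route as the paper: both reduce to equation~(\ref{e5.5}) via the parity congruence $\tau_{E_i}^{v}\equiv\sigma_{E_i}^{w}\pmod 2$, whose content is precisely that the number $g_v$ of primes of $F_{cyc}$ above $v$ is an odd power of $p$. The paper obtains this congruence for each $E_i$ individually by citing \cite[Lemma~3.2]{Sh} (which uses that $\Gamma$ permutes the primes $w\mid v$ transitively, so in fact all the $\sigma_{E_i}^{w}$ are equal), whereas you unpack the same oddness-of-$g_v$ point directly for the difference; the distinction is cosmetic.
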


\begin{proof}
From \cite[Lemma 3.2]{Sh}, we have $ \tau_{E_i}^{v} \equiv \sigma_{E_i}^{w} \enspace (\text{mod} \enspace 2), i=1,2. $
This implies $ \tau_{E_1}^{v} - \tau_{E_2}^{v} \equiv \sigma_{E_1}^{w} - \sigma_{E_2}^{w} \enspace (\text{mod} \enspace 2).   $
Now the {\cl claim} follows from (\ref{e5.5}).

\end{proof}


\begin{thebibliography}{99}

{\gl \bibitem[Bi]{Bi} B. J. Birch, {\it Conjectures concerning elliptic curves}, Proc. Sympos. Pure Math. VIII (1965), 106-112.}

\bibitem[Ch]{Ch} Aribam Chandrakant Sharma, {\it Iwasawa invariants for the False-Tate extension and congruences between modular forms}, Journal of Number Theory 129 (2009), 1893-1911.

\bibitem[DD]{DD} Tim Dokchitser \& Vladimir Dokchitser, {\it On the Birch-Swinnerton-Dyer  quotients modulo squares}, Ann. of Math. (2) 172 (2010), no. 1, 567-596. 

\bibitem[DD1]{DD1} \----------, {\it Root numbers and parity of ranks of elliptic curves}, J. Reine Angew. Math. 658 (2011), 39-64. 

\bibitem[DDT]{DDT} H. Darmon, F. Diamond \& R. Taylor, {\it Fermat's Last Theorem}, Current Developments in Mathematics 1 (1995), 1-157. 

\bibitem[De]{De} P. Deligne, {\it Les constantes des equations fonctionnelles des fonctions L}, Modular functions of one variable II (Antwerp, 1972), Lect. Notes in Math. 349, 501-597.

\bibitem[Do]{Do} Tim Dokchitser, {\it Notes on the parity conjecture}, September 2010, CRM Barcelona Advanced Courses in Mathematics, ``Elliptic Curves, Hilbert Modular Forms and Galois Deformations", Birkhauser, 2013.

{\gl \bibitem[Dr]{Dr}Michael J. Drinen, Iwasawa mu-invariants of Selmer groups, Ph.D. Thesis (1999), University of Washington.}

\bibitem[EPW]{EPW} M. Emerton, R. Pollack \& T. Weston, {\it Variation of Iwasawa invariants in Hida families}, Invent. Math. 163(3) (2006), 523-580.

\bibitem[Gr]{Gr} R. Greenberg, {\it lwasawa Theory for p-adic representations}, Advanced Studies in Pure Math. 17 (1989), 97-137.

\bibitem[Gr1]{Gr1} \----------, {\it Iwasawa theory of elliptic curves}, Lecture Notes in Mathematics, Volume 1716 (1999), 51-144.

\bibitem[Gu]{Gu} Li Guo, {\it On a generalization of Tate dualities with application to Iwasawa theory}, Ph.D. Thesis (1992), University of Washington.

\bibitem[Gu1]{Gu1} \----------, {\it On a generalization of Tate dualities with application to Iwasawa theory}, Compos. Math. 85(2) (1993), 125-161.

\bibitem[GV]{GV} R. Greenberg \& V. Vatsal, {\it On the Iwasawa invariants of elliptic curves}, Invent. Math. 142 (2000), 17-63.

\bibitem[Ha]{Ha} J. Hatley, {\it Rank parity for congruent supersingular elliptic curves}, {\cl to appear in Proceedings of the AMS.}

\bibitem[HM]{HM} Y. Hachimori \& K. Matsuno, {\it An analogue of Kida's formula for Selmer
groups of elliptic curves}, J. Algebraic Geom. 8(3) (1999), 581-601.

{\gl \bibitem[Ki]{Ki} B. D. Kim, {\it The Parity Theorem of Elliptic Curves at Primes with Supersingular Reduction}, Compos. Math. 143 (2007), 47-72.}

\bibitem[Ki1]{Ki1} \----------, {\it The Iwasawa invariants of the plus/minus Selmer groups}, Asian J. Math. 13(2) (2009), 181-190.

\bibitem[Ki2]{Ki2} \----------, {\it The plus/minus Selmer groups for supersingular primes}, J. Aust. Math. Soc. 95 (2013), 189-200.

\bibitem[Ko]{Ko} S. Kobayashi, {\it Iwasawa theory for elliptic curves at supersingular primes}, Invent. Math. 152(1) (2003), 1-36.

{\gl \bibitem[Mo]{Mo} P. Monsky, {\it Generalizing the Birch-Stephens theorem}. I: Modular curves, Math. Z., 221 (1996), 415-420.}
 
\bibitem[MR]{MR} B. Mazur \& K. Rubin, {\it Finding large Selmer rank via an arithmetic theory of local constants}, Ann. of Math. 166(2) 166 (2007), 581-614.

{\bl \bibitem[Ne]{Ne} J. Nekovar, {\it Selmer complexes}, Asterisque 310 (2006). }

\bibitem[Ne1]{Ne1} \----------, {\it On the parity of ranks of Selmer groups III}, Doc. Math. 12 (2007), 243-274. Erratum: Doc. Math. 14 (2009), 191-194.

{\bl \bibitem[Ne2]{Ne2} \----------, {\it On the parity of ranks of Selmer groups IV}, Compos. Math. 145(6) (2009), 1351-1359. }

\bibitem[Ne3]{Ne3} \----------, {\it Compatibility of arithmetic and algebraic local constants (the case $ l \neq p $)}, Compos. Math. 151(9) (2015), 1626-1646. 
 
\bibitem[Pe]{Pe}B. Perrin-Riou, {\it Arithmetique des courbes elliptiques a reduction supersinguliere en p}, Experiment. Math. 12 (2003), 155-186.

\bibitem[Ri]{Ri} Kenneth A. Ribet,  {\it Torsion points on abelian varieties in cyclotomic extensions} (appendix to an article by N. Katz and S. Lang), Enseign. Math. (2) 27 (1981), no. 3-4, 285-319 (1982). 

{\gl \bibitem[Se]{Se} J. P. Serre, {\it Proprietes galoisiennes des points d'ordre fini des courbes elliptiques}, Invent. Math. 15 (1972), 259-331.}

\bibitem[Sh]{Sh} Sudhanshu Shekhar, {\it Parity of ranks of elliptic curves with equivalent mod $p$ Galois representations}, Proc. Amer. Math. Soc. 144(8) (2016), 3255-3266.

\bibitem[Si]{Si} J. Silverman, {\it The Arithmetic of Elliptic Curves}.

{\gl \bibitem[Si1]{Si1} \----------, {\it Advanced Topics in the Arithmetic of Elliptic Curves}.}

\end{thebibliography}
\end{document}